\definecolor{darkorange}{RGB}{255,165,0}
\definecolor{altviolet}{RGB}{139,0,139}
\definecolor{turquoise}{RGB}{64,224,208}
\definecolor{nicegreen}{rgb}{0.0, 0.5, 0.0}
\definecolor{niceblue}{rgb}{0.36, 0.54, 0.66}
\newtheorem{theorem}{Theorem}[section]
\newtheorem{assumption}[theorem]{Assumption}
\newtheorem{corollary}[theorem]{Corollary}
\newtheorem{definition}[theorem]{Definition}
\newtheorem{fact}[theorem]{Fact}
\newtheorem{lemma}[theorem]{Lemma}
\newtheorem{notation}[theorem]{Notation}
\newtheorem{proposition}[theorem]{Proposition}
\newtheorem{remark}[theorem]{Remark}
\newtheorem*{ass*}{Assumption}
\newtheorem*{model*}{Model}
\newtheorem*{coupling*}{Coupling}
\newcommand{\E}{\mathbb E}
\newcommand{\R}{\mathbb R}
\newcommand{\Rd}{\mathbb R^d}
\newcommand{\X}{\mathbb X}
\newcommand{\N}{\mathbb N}
\newcommand{\dd}{\mathrm{d}} 
\DeclareMathOperator{\dist}{dist} 
\newcommand{\be}{\begin{equation}}
\newcommand{\ee}{\end{equation}}
\newcommand{\C}{\mathscr {C}}
\newcommand{\conn}[3]{#1 \longleftrightarrow #2\textrm { in } #3}
\newcommand{\mulfd}{P_\lambda^{L,n}}
\newcommand{\e}{\text{e}}
\DeclareMathOperator*{\esssup}{ess\,sup}
\DeclarePairedDelimiter\abs{\lvert}{\rvert}
\DeclarePairedDelimiterX{\inner}[2]{\langle}{\rangle}{#1, #2}
\newcommand{\pla}{\mathbb P_{\lambda}}
\newcommand{\rom}[1]{\uppercase\expandafter{\romannumeral #1\relax}}
\numberwithin{equation}{section}
\title{Sharpness of the percolation phase transition for weighted random connection models}
\author{
Alejandro Caicedo\footnote{Ludwig-Maximilians-Universität München, Mathematisches Institut, Theresienstraße 39, 80333 München, Germany;
    Email: caicedo@math.lmu.de
    \includegraphics[height=1em]{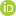}~\url{https://orcid.org/0009-0009-4299-6356}} \and 
Leonid Kolesnikov\footnote{Technische Universität Braunschweig, Institut für Mathematische Stochastik, Universitätspl. 2, 38106 Braunschweig, Germany; Email: leonid.kolesnikov@tu-bs.de
\includegraphics[height=1em]{ORCIDiD_icon.png}~\url{https://orcid.org/0000-0002-3826-0479}}}
\date{\today}
\begin{document}

\maketitle

\begin{abstract}
We establish the sharpness of the percolation phase transition for a class of infinite-range weighted random connection models. The vertex set is given by a marked Poisson point process on $\R^d$ with intensity $\lambda>0$, where each vertex carries an independent weight. Pairs of vertices are then connected independently with a probability that depends on both their spatial displacement and their respective weights. It is well known that such models undergo a phase transition in 
$\lambda$ with respect to the existence of an infinite cluster (under suitable assumptions on the connection probabilities and the weight distribution). We prove that in the subcritical regime the cluster-size distribution has exponentially decaying tails, whereas in the supercritical regime the percolation probability grows at least linearly with respect to $\lambda$ near criticality. Our proof follows the approach of Duminil-Copin, Raoufi, and Tassion, applying the OSSS inequality to a finite-lattice approximation of the continuum model in order to derive a new differential inequality, which we then analyze and pass to the limit. In addition to the classical random connection model, we consider weighted models with unbounded weights satisfying the min-reach condition under which the neighborhood of each vertex is deterministically bounded by a radius depending solely on its weight. Notably, finite range is not assumed---that is, we allow unbounded edge lengths---but the weight distribution is required to satisfy appropriate moment conditions. We expect that our method extends to a broad class of weighted random connection models.
\end{abstract}
\medskip
\noindent\textbf{MSC (2020):} 60K35, 82B43, 60G55.~\\

\noindent\textbf{Keywords:} Random connection model, continuum percolation, Bernoulli percolation, sharp phase transition, subcritical phase, differential inequality, marked point processes, min-reach condition.
\newpage

\section{Introduction}

\subsection{Motivation}
The Gilbert graph, introduced by Gilbert in \cite{Gil61}, is generally considered the first mathematical model of continuum percolation. In this model, points are distributed according to a homogeneous Poisson point process on $\mathbb{R}^d$, and any two points are connected by an edge if their Euclidean distance is less than a fixed threshold. Although Gilbert's original motivation lay in modeling communication networks, the model introduced key probabilistic and geometric ideas that are now central to the study of continuum percolation. The \emph{random connection model} (RCM), introduced by Penrose in \cite{Pen91}, extends this framework by replacing deterministic connections with probabilistic ones: any two points are connected independently with a probability determined by an \emph{adjacency function} (also called a connection function), typically depending on their spatial distance and decaying with it. This model sits conceptually between Poisson-Boolean percolation, where randomness is encoded in the geometry (e.g., in the radii of balls), and Bernoulli percolation, where edges of a lattice graph are retained independently. 

Penrose showed that the RCM exhibits a non-trivial phase transition: there exists a critical intensity $\lambda_T \in (0,\infty)$ such that for $\lambda < \lambda_T$, the expected size of the cluster containing a typical point is finite, while for $\lambda > \lambda_T$, this expectation diverges. He also considered a second critical threshold $\lambda_c$, defined as the infimum value of $\lambda$ for which a typical vertex belongs to an infinite cluster with positive probability, and showed that $\lambda_T \le \lambda_c < \infty$. Meester \cite{Mee95} extended this analysis by adapting the differential inequality method of Aizenman and Barsky \cite{AB87} from Bernoulli percolation, and proved that $\lambda_c = \lambda_T$ for isotropic, non-increasing adjacency functions, thus establishing the sharpness of the phase transition.

Further progress came with the work of Heydenreich, van der Hofstad, Last, and Matzke \cite{HHLM22}, who, in sufficiently high dimensions ($d > 6$), established convergence of the lace expansion for the RCM, derived the triangle condition, proved an infrared bound, and computed the critical exponent $\gamma = 1$, confirming mean-field behavior near criticality. These results were extended to the \emph{marked random connection model} by Heydenreich and Dickson \cite{DH22}, where each vertex is assigned an independent mark influencing its connection behavior. This setting allowed Caicedo and Dickson \cite{CD24} to prove sharpness and compute additional critical exponents by extending the framework of \cite{AB91}. However, all of these results required a uniform bound on the marks; in the context of Poisson-Boolean percolation, this is analogous to assuming bounded radii.

This is where our contribution lies --- apart from treating the classical RCM with potentially infinite range under minimal assumption upon the adjacency function. We establish the sharpness of the phase transition for a class of infinite-range weighted random connection models in which no uniform bound is imposed on the marks (which we refer to as weights to emphasize their ordering). The vertex set is given by a marked Poisson point process on $\mathbb{R}^d$ with intensity $\lambda > 0$, and edges are included independently according to an adjacency function that depends on the spatial displacement and the respective weights of the points. We allow unbounded weights, assuming their distribution is light-tailed in the sense that it satisfies a moment condition sufficient for our analysis. Our results cover a class of models where there is no global bound on the range of interaction but individual connectivity remains locally controlled; to be more precise, we require the neighborhood of each vertex to be deterministically contained in a ball whose radius depends solely on its weight. This extends the scope of continuum percolation theory beyond previously established limits. We refer to such model as \emph{min-reach RCM} because whether an edge between two vertices of a fixed distance can be drawn with positive probability depends only on the minimum of the weights.

Our proof draws on both classical and modern techniques. On the classical side, we employ a finite-volume, discrete approximation of the continuum model, adapting the approach of Meester and Roy from the 1990s~\cite{MeeRoy96}. This allows us to control the geometry and combinatorics of the system, and to reduce the analysis to a discrete setup where Boolean function tools can be applied. On the modern side, we build on recent developments in the analysis of Boolean functions, particularly based on the OSSS inequality \cite{OSSS05}, which has proven to be a powerful tool in percolation theory by Duminil-Copin, Raoufi, and Tassion. Among several other settings~\cite{DRT19b,DRT19a}, they applied it in \cite{DRT20} to establish sharpness of the phase transition for Poisson-Boolean percolation with unbounded radii. At the same time, Hutchcroft \cite{Hut20} developed a related algorithmic approach to analyze the random-cluster model on graphs, providing a discrete analogue that generalizes well to long-range settings. Hutchcroft’s algorithm, while developed in a discrete setting, extends naturally to our model and provides the backbone of our proof. However, his analysis does not address the influence of rare but heavy vertices. For this, we adapt the influence bounds developed in~\cite{DRT20} --- originally specific to Poisson-Boolean percolation --- which generalize effectively to our setting. The interplay between the classical finite-volume approximation and these modern techniques is essential to deriving a differential inequality that captures the weighted, long-range structure of the model. We combine these two strands of work, along with original arguments tailored to our setting, which ultimately allows us to prove exponential decay of cluster sizes in the subcritical regime, and establish the sharpness of the phase transition. A more nuanced discussion of our work in the context of recent developments will be given in Subsection~\ref{subsect:discuss} --- after stating our main result Theorem~\ref{thm:Susceptibility Mean-Field Bound}. 

\subsection{Framework}

\paragraph{The weighted random connection model.} 

We now give an informal description of the model; the reader is referred to \cite[Section 3]{DH22} for the formal construction. Our vertex set is given by a Poisson point process $\eta$ in $\X = \Rd \times [1,\infty)$  with intensity measure $\lambda \nu$, where $\lambda > 0$ and $\nu = \text{Leb} \otimes \pi$; $\text{Leb}$ refers to the standard Lebesgue measure on $\R^d$ and $\pi$ is a probability measure on $[1,\infty)$ --- our vertices have a spatial coordinate and a weight. We assume $d \ge 2$ (to ensure that a phase transition is possible under reasonable assumptions). For simplicity, we also assume that the weight distribution $\pi$ has a non-increasing Lebesgue density or is discrete with non-increasing weight probabilities. The probability of an edge to appear will be determined by a measurable adjacency (or connection) function $\varphi : \X^2 \to [0,1]$: Two vertices $x,y \in\eta$ will be connected by an edge with probability $\varphi(x,y)$, with distinct edges being sampled independently.

We want to impose some general assumptions upon the adjacency function $\varphi$.
\begin{ass*}[A]\label{ass:A}
The adjacency function $\varphi$ satisfies the following properties:
\begin{enumerate}
\item[(A.1)] For any $(\Bar{x},a),(\Bar{y},b)\in\X$, we have
\[
\varphi\big((\Bar{x},a),(\Bar{y},b)\big)=\varphi(| \Bar{x}- \Bar{y}|;a,b).
\]
\item[(A.2)] For every $a,b\ge 1$, the map $r\mapsto \varphi(r;a,b)$ is non-increasing.
\item[(A.3)] For every $r\ge 0$ and $a,b\ge 1$, we have $\varphi(r;a,b)=\varphi(r;b,a)$, and $\hat a\mapsto \varphi(r;\hat a,b)$ is non-decreasing (hence $\hat b\mapsto \varphi(r;a,\hat b)$ is also non-decreasing).

\end{enumerate}
\end{ass*}

Furthermore, in order to guarantee the existence of a non-trivial phase transition, we impose the following
two-sided neighborhood bounds:

\begin{ass*}[NB]\label{ass:NB}
There exist constants $\upsilon, \Upsilon>0$ such that for all $a\geq 1$ we have
\begin{align}\label{ass:mc_NB}
    \tag{NB}
    \upsilon \leq \int_{0}^{\infty} 
        \left(\int_1^{\infty}\varphi(r;a,b)\,\pi(\dd b) \right) 
        r^{d-1}\dd r &\leq \Upsilon.
\end{align}
\end{ass*}

We then denote a realization of the full random graph by $\xi$, which means that $\xi$ is a realization of the Poisson point process $\eta$ together with the realization of the edges sampled via $\varphi$ as described above. Note that since both the distribution of the vertex set $\eta$ and the adjacency function $\varphi$ are spatially translation invariant, the whole random graph $\xi$ is also spatially translation invariant.
 
\paragraph{Augmentation by fixed points.} Since our intensity measure $\nu$ is non-atomic, any given point $x\in\X$ is almost surely not a vertex of $\xi$. Therefore, for studying most interesting events, it is necessary to consider augmented versions of $\xi$. Given a vertex set $\eta$ and a point $x \in \X$, we denote by $\eta \cup \{x\}$ the vertex set where we place a vertex at position $x$. Given a graph $\xi$, we obtain $\xi^x$ by augmenting the vertex set $\eta$ to $\eta \cup \{x\}$ as above and letting $\xi^x$ inherit all the edges that were present in $\xi$. The edge set of $\xi^x$ is completed by additionally drawing edges from $x$ to the vertices of $\eta$, independently of everything else, with probabilities prescribed by the same adjacency function $\varphi$. This can be easily generalized to obtain $\eta\cup \{x_1,\dots,x_m\}$ and $\xi^{x_1,\ldots,x_m}$ for any $m\in\N$ and any set of points $x_1,\dots,x_m \in \X $.

\paragraph{Connection events.}
Given $x,y\in\eta$, we say that $x$ and $y$ are \emph{adjacent} in $\xi$, denoted $x\sim y$, if there exists an edge with endpoints $x$ and $y$. We say that $x$ and $y$ are connected in $\xi$, denoted $\conn{x}{y}{\xi}$, if there exists a finite sequence of adjacent vertices in $\xi$ starting at $x$ and ending at $y$. That is, either $x=y$ or there exist $n\geq 2$ and $x_1,\ldots,x_{n}\in\eta$ such that $x\sim x_1$, $y\sim x_{n}$, and $x_i\sim x_{i+1}$ in $\xi$ for all $i\in\left\{1,\ldots,n-1\right\}$. Next, given $x\in\eta$ and a (possibly random) set $B\subset\X$, we say that $\conn{x}{B}{\xi}$ if there exists $y\in \eta\cap B$ such that $\conn{x}{y}{\xi}$. Furthermore, because of the non-atomicity of $\nu$, almost surely any fixed point $x \in \X$ will not be present in $\eta$, therefore (in most cases) the relevant events to consider are of the form $\conn{x}{y}{\xi^{x,y}}$ or $\conn{x}{B}{\xi^{x}}$.

\paragraph{Probability measure and expectation.}
All random objects introduced above are defined on a common probability space
\((\Omega,\mathcal F,\mathbb P_\lambda)\),
where the parameter $\lambda>0$ denotes the intensity of the underlying marked
Poisson point process.
The stationary law $\mathbb P_\lambda$ governs both the random
vertex set~$\eta$ and the independent edge configuration determined by the
adjacency function~$\varphi$. We choose to handle Palm expectations implicitly via augmented configurations $\xi^x$ rather than by explicitly changing measure. Expectation with respect to~$\mathbb P_\lambda$ is denoted by~$\mathbb E_\lambda$.

\subsection{Main Result}

 We start by defining the relevant quantities in order to state our main result, Theorem~\ref{thm:Susceptibility Mean-Field Bound}. First and foremost, we are interested in the set of all points connected by a path to a given vertex of our graph, i.e., in the cluster of that vertex.

\begin{definition}[Cluster of a point]
For a given point $x \in \X$, we define the cluster $\C(x)$ of $x$ in $\xi^x$ by
\begin{align}
    \C(x)=\C(x,\xi^x):= \{y \in \eta \mid \conn x y \xi^x\}.
\end{align}
\begin{remark}
   Since a fixed point $x \in \X$ is almost surely not contained in $\xi$, it does not generally make sense to replace $\xi^x$ by $\xi$ in the above definition. 
\end{remark}
\end{definition}

In our analysis, we focus on the cluster of a point present at the origin (denoted by $\Bar{0}$) and force this point to have the minimal weight $1$. This choice serves as a convenient normalization and provides a fixed reference point throughout the analysis. The definitions introduced below could equally well be formulated by assigning a different deterministic weight to the point at the origin, or by sampling its weight according to the underlying weight distribution. Under the general regularity assumptions imposed on the model, the local connectivity properties of a point are uniformly controlled across admissible weights, and the qualitative behavior of the associated cluster does not depend sensitively on the particular choice made at the origin. In this sense, the quantities defined below are expected to reflect the typical behavior of clusters in the model. 

We now introduce the main objects of interest.

\begin{definition} [Tail function, percolation probability, expected susceptibility]
For $\lambda\geq 0$, we define 
\begin{itemize}
    
\item the tail function $\theta_\lambda\colon\N\to \left[0,1\right]$ by
\begin{align}
    \theta_\lambda(k) &:= \pla\left(\abs*{\C(\Bar{0},1)}\geq k\right),
\end{align}
\item the percolation probability $\theta(\lambda)$ by
\begin{align}
    \theta(\lambda):=\lim_{k\to\infty}\theta_\lambda(k)
=\mathbb{P}_\lambda\!\left(\,|{\C(\Bar{0},1)}|=\infty\,\right),
\end{align}
\item the expected susceptibility $\chi_\lambda$ by
\begin{align}
    \chi_\lambda &:= \E_\lambda\left[\abs*{\C(\Bar{0},1)}\right].
\end{align}
\end{itemize}
\end{definition}

We use these quantities to describe the behavior of the model in different regimes; the transition between those is governed by the following critical intensities:

\begin{definition}[Critical intensities]
We define the critical intensities
\begin{align*}
    \lambda_T &:= \inf\{\lambda \geq 0\colon \E_\lambda[\vert \C(\Bar {0},1)\vert] =\infty\},
    \\\lambda_c &:= \sup\left\{\lambda\geq 0\colon \theta(\lambda)=0 \right\}.
\end{align*}
\end{definition}
A first result states that a non-trivial phase transition in terms of the expected cluster size occurs in the model:
\begin{lemma}[Non-triviality of the phase transition]\label{lem:ex_phase_transition}
Under the assumptions~\textup{(A)} and~\textup{(NB)}, the critical intensities
$\lambda_T$ and $\lambda_c$ defined above satisfy $0<\lambda_T \leq \lambda_c < \infty.$
\end{lemma}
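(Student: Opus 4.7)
The plan is to establish the three inequalities $0 < \lambda_T \le \lambda_c < \infty$ in turn; the middle one is essentially tautological, since $\lambda < \lambda_T$ implies $\chi_\lambda < \infty$, hence $|\C(\Bar{0},1)| < \infty$ almost surely and $\theta(\lambda) = 0$, so taking the supremum gives $\lambda_T \le \lambda_c$.

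For $\lambda_T > 0$ I would estimate $\chi_\lambda$ by a path expansion. Applying the Mecke formula once per intermediate vertex and bounding the existence probability of any length-$k$ path by the product of its edge probabilities, one obtains
\[
\chi_\lambda \;\le\; \sum_{k \ge 1} \lambda^k \int_{\X^k} \varphi\bigl((\Bar{0},1), z_1\bigr)\,\varphi(z_1, z_2)\cdots \varphi(z_{k-1}, z_k)\, \prod_{i=1}^{k} \nu(\dd z_i).
\]
Integrating from the outside in and using (A.1) together with the upper bound in (NB), each single integral $\int_\X \varphi(x, z)\,\nu(\dd z)$ is bounded by $d\omega_d \Upsilon$ uniformly in $x \in \X$, where $\omega_d$ denotes the volume of the unit ball in $\Rd$. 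Hence the $k$-fold integral above is at most $(d\omega_d\Upsilon)^k$, so the geometric series converges whenever $\lambda < (d\omega_d \Upsilon)^{-1}$, yielding $\lambda_T \ge (d\omega_d \Upsilon)^{-1} > 0$.

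For $\lambda_c < \infty$ the plan is to couple the model from below with a supercritical Bernoulli site percolation on a rescaled lattice, in the spirit of the classical Meester--Roy construction~\cite{MeeRoy96}. The crucial preparatory step is to upgrade the integral positivity in (NB) to a uniform pointwise lower bound on $\varphi$: the map $r \mapsto \int_1^\infty \varphi(r; 1, b)\,\pi(\dd b)$ is non-increasing by (A.2), and the positivity of its $r^{d-1}$-weighted integral forces it to be strictly positive on some interval $[0, r_0]$; picking $b_* \ge 1$ with $\varphi(r_0; 1, b_*) > 0$ and invoking the monotonicities in $r$, $a$, and $b$ from (A.2) and (A.3), one obtains $\epsilon_0 > 0$ such that $\varphi(r; a, b) \ge \epsilon_0$ for all $r \le r_0$, $a \ge 1$, and $b \ge b_*$. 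I would then tessellate $\Rd$ into cubes of side length $L$ small enough that any two points of adjacent cubes lie within Euclidean distance $r_0$, and declare a cube \emph{good} if it contains at least $N$ vertices of weight $\ge b_*$ (``heavy'' vertices). For each fixed $N$ the probability of a cube being good tends to $1$ as $\lambda \to \infty$; conditional on two adjacent cubes being good, the probability that some pair of heavy vertices across them is directly linked is at least $1 - (1 - \epsilon_0)^{N^2}$, which tends to $1$ as $N \to \infty$. A standard Liggett--Schonmann--Stacey-type stochastic-domination argument then embeds a supercritical Bernoulli site percolation inside the good-cube process for $\lambda$ and $N$ sufficiently large, producing an infinite connected component of good cubes with positive probability; since $(\Bar{0}, 1)$ is joined with probability at least $\epsilon_0$ to each heavy vertex in its own cube (whose diameter is at most $r_0$), this yields $\theta(\lambda) > 0$, hence $\lambda_c < \infty$.

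The main technical hurdle I anticipate is the extraction of the uniform pointwise lower bound in the third paragraph: it requires a careful bootstrap from the merely integral condition (NB) through all three monotonicity statements of (A) to a genuinely positive value on an open region of $(r, a, b)$-space. Once this is in place, the coarse-graining and stochastic-domination steps are essentially classical, and the two-sided bound of (NB) plays exactly its expected role: the upper bound drives the sub-criticality of the path-counting series, while the lower bound supplies the positive connectivity that feeds the renormalisation.
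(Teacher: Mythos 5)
Your proof is correct and, for the lower bound on $\lambda_T$, essentially coincides with the paper's: you organize the estimate as a path expansion via Mecke and a product bound, while the paper presents the same computation as a Galton--Watson branching comparison; both yield $\lambda_T \ge (s_d\Upsilon)^{-1}$ (with $s_d=d\omega_d$). For $\lambda_c<\infty$ you take a genuinely different route. You build a cube renormalization with good/heavy cubes and invoke Liggett--Schonmann--Stacey domination of supercritical Bernoulli site percolation on $\mathbb{Z}^d$, a fully self-contained argument whose only black box is elementary lattice percolation; the paper instead restricts the marked process to a positive-$\pi$-measure weight set $B$ on which $\varphi$ is uniformly bounded below, projects to a homogeneous Poisson process, and couples it against an unweighted \emph{finite-range} RCM whose nontrivial phase transition is quoted from Meester--Roy. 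Both are sound; the paper's is shorter because it offloads the coarse-graining to a cited result, while yours re-derives it. One small point to tighten in your extraction of the uniform lower bound: choosing $b_*$ with $\varphi(r_0;1,b_*)>0$ does not by itself guarantee $\pi([b_*,\infty))>0$, which you need for heavy vertices to occur with positive density. This is easily fixed---since $\int_1^\infty\varphi(r_0;1,b)\,\pi(\dd b)>0$ and $b\mapsto\varphi(r_0;1,b)$ is non-decreasing by (A.3), one can always pick $b_*$ satisfying both $\varphi(r_0;1,b_*)>0$ and $\pi([b_*,\infty))>0$ simultaneously---but it should be said; the paper sidesteps this by asserting a measurable set $B$ with $\pi(B)>0$ and a uniform $\varepsilon$-lower bound on it.
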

The proof of Lemma~\ref{lem:ex_phase_transition} is deferred to Appendix~\ref{AII}.~\\

In the above framework, the adjacency function~$\varphi$ may depend on the vertex weights in a monotone way.  
This general setting covers both the classical \emph{non-weighted} random connection model, where the connection probability depends only on spatial distance, and more general \emph{weighted} models in which the connection range or intensity varies with the weights.  
To make this distinction explicit, and since both cases play a central role in our analysis, we now introduce two subclasses of the general model.  
Both satisfy Assumptions~\textup{(A)} and~\textup{(NB)}.  
The first is the \emph{non-weighted random connection model}, obtained when the connection function is independent of the vertex weights,  
and the second is the \emph{min-reach random connection model}, which retains weight dependence but imposes a deterministic geometric control on connections via a reach function $R$ depending on the minimum of the two weights.

\begin{model*}[Non-weighted RCM]
\label{model:non-weighted}
Let a model satisfy Assumptions~\textup{(A)}.  
We refer to the model as the \emph{non-weighted random connection model} if the adjacency function~$\varphi$ is independent of the vertex weights,  
that is, if there exists a measurable non-increasing function $\phi:[0,\infty)\to[0,1]$ such that
\begin{align}
\varphi(r;a,b)=\phi(r)\qquad\text{for all }r\ge 0,\ a,b\ge 1. \tag{NonW}\label{ass:non-weighted} 
\end{align}
\end{model*}

\begin{remark}
This model coincides with the classical random connection model introduced by Penrose~\cite{Pen91}.  
In particular, we allow $\phi(r)>0$ for all $r>0$, so that the connection range may be unbounded.  
\end{remark}

\setlength{\fboxsep}{0pt}   
\setlength{\fboxrule}{0.8pt} 

\begin{figure}[!ht]
  \centering
  \fbox{%
    \includegraphics[
      width=0.75\linewidth,
      trim=10pt 100pt 10pt 100pt,
      clip
    ]{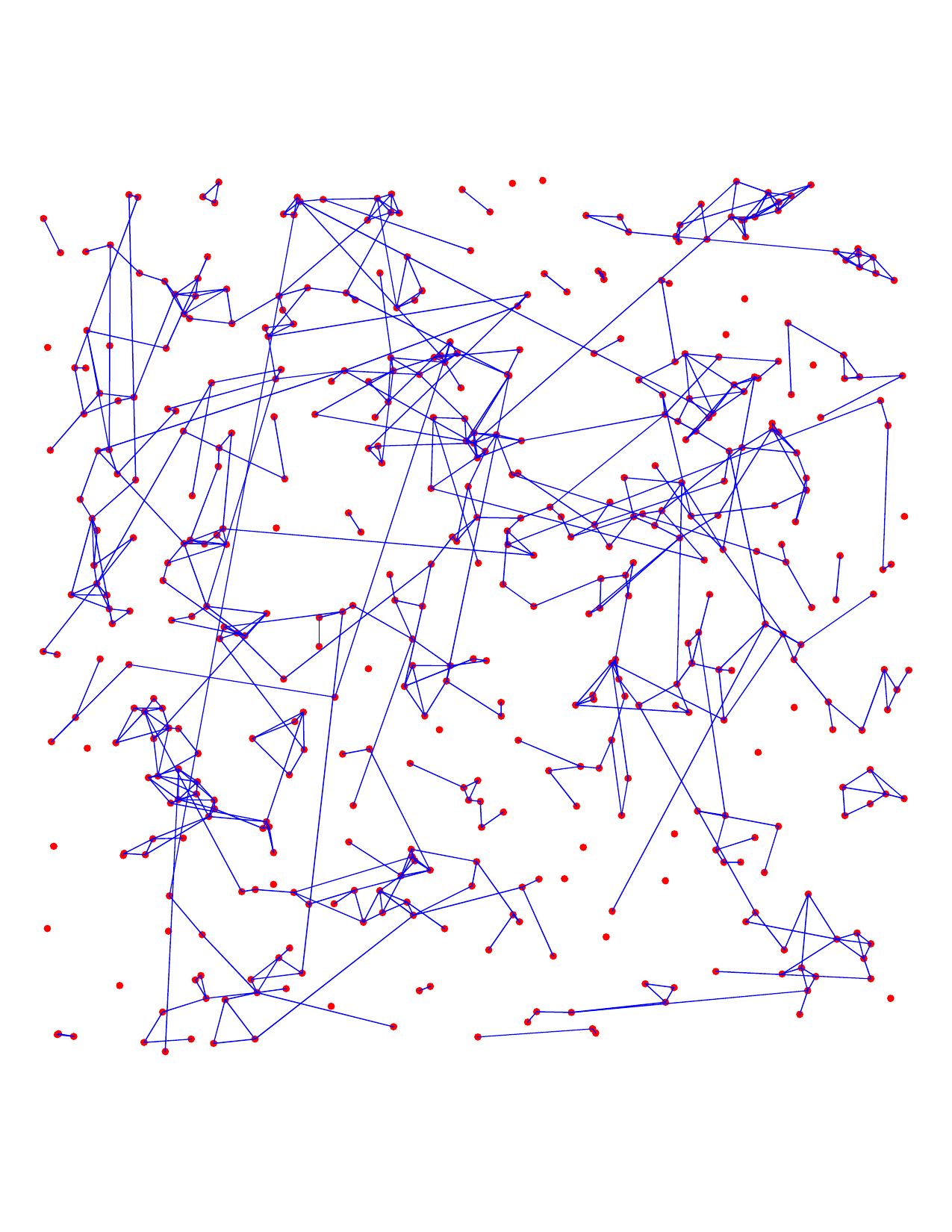}%
  }
  \caption{Realization of a non-weighted random connection model in $\mathbb{R}^2$ with the adjacency function given by $\phi(r) \;=\; 1 - \exp\bigl(- r^{-3}\bigr)$.}
    \label{fig:nonweightrcm_sim}
\end{figure}

\begin{model*}[Min-reach RCM]
\label{model:min-reach}
Let a model satisfy Assumptions~\textup{(A)}. We refer to it as \emph{min-reach random connection model} if the following additional assumption is satisfied: There exists a non-decreasing function $R:[1,\infty)\mapsto (0,\infty)$ such that for every $a,b\geq 1$ 
\begin{align}
\varphi(r;a,b)=0 \text { if } r > R(\min\{a,b\})~\text{and } \varphi(r;1,1)>0~\text{if } r< R(1). \tag{MinR}\label{ass:eff-fin-range}    
\end{align}

\begin{remark}
The function $R$ --- which we call the reach function --- prescribes a deterministic geometric upper bound on the possible interaction range of each vertex to its weight.
The min-reach condition ensures that two vertices can not connect if their Euclidean distance exceeds the minimum of their respective reach radii.
\end{remark}
\end{model*}

Furthermore, in order to derive the sharpness of the phase transition, we impose an additional assumption on the min-reach RCM throughout the paper. Specifically, we assume an exponential moment condition~\textsc{(EMC)} on the weight distribution --- ensuring, together with~(\ref{ass:eff-fin-range}), that long edges in the percolation graph are highly improbable: There exist constants $C>0$ and $\varepsilon>0$ such that
 \begin{align}
\tag{EMC}
\label{assumption_moment_condition}   
    \int_1^\infty  \exp\left\{C{R(m)^{d+\varepsilon}}\right\} \pi (\dd m)<\infty.
\end{align}

\setlength{\fboxsep}{0pt}   
\setlength{\fboxrule}{0.8pt}

\begin{figure}[!ht]
  \centering
  \fbox{%
    \includegraphics[
      width=0.75\linewidth,
      trim=0pt 0pt 0pt 0pt,
      clip
    ]{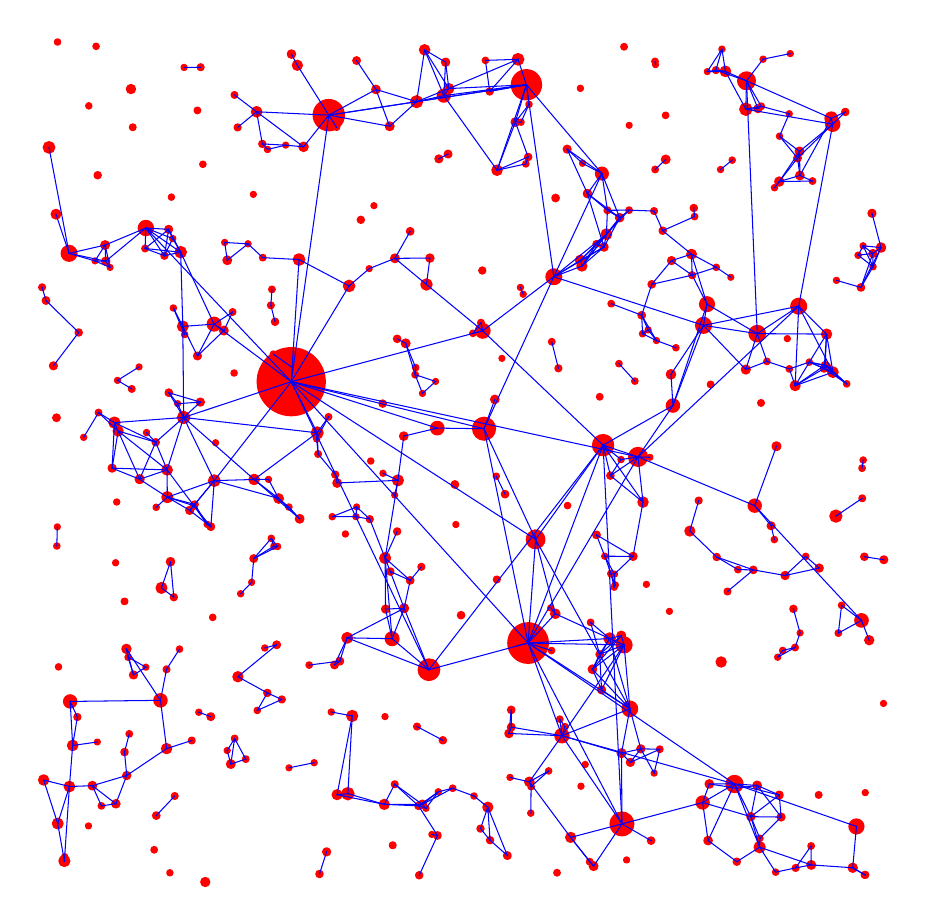}%
  }
  \caption{Realization of a min-reach random connection model in $\R^2$ with a Pareto-tailed weight distribution $\pi$. Displayed node radii are proportional to the weights. The adjacency function is given by $\varphi(r,a,b)
= \mathbf{1}_{\{r\le \min(a,b)\}}\,
\bigl(1 - e^{-\,ab/r^{3}}\bigr).$} 
    \label{fig:minreach_sim}
\end{figure}

\medskip
\noindent\textbf{Standing assumptions.}
Throughout the paper, all results are derived under Assumptions~\textup{(A)} and~\textup{(NB)}.
For the min-reach model we additionally impose the exponential moment condition~\eqref{assumption_moment_condition}.
The condition $\varphi(r;1,1)>0$ for $r< R(1)$ ensures that the \emph{lower} bound in~\textup{(NB)} holds automatically,
while the \emph{upper} bound in~\textup{(NB)} follows from the integrability guaranteed by~\eqref{assumption_moment_condition}.
Together, assumptions~\eqref{ass:eff-fin-range} and~\eqref{assumption_moment_condition} therefore imply the full neighborhood bound~\textup{(NB)}.

~\\We are now ready to state and discuss our main result:

\begin{theorem}[Sharpness of the phase transition]\label{thm:Susceptibility Mean-Field Bound}
Assume that the adjacency function~$\varphi$ satisfies Assumptions~\textup{(A)} and~\textup{(NB)}. 
Suppose further that either
\begin{enumerate}
\item[(i)] the model is \emph{non-weighted}, that is, $\varphi$ is independent of the vertex weights
\textup{(assumption~\eqref{ass:non-weighted} is verified)}, or
\item[(ii)] the model is \emph{min-reach} in the sense of~\eqref{ass:eff-fin-range}, and the weight distribution~$\pi$ satisfies 
the exponential moment condition~\eqref{assumption_moment_condition}.
\end{enumerate}
Then the following hold:
\begin{enumerate}
\item[\textup{(a)}] (\textit{Subcritical regime.})
For any $\lambda < \lambda_T$, there exists a constant $C_\lambda>0$ such that
\[
\theta_\lambda(k) \le \exp(-C_\lambda k)
\qquad \text{for all } k\in\mathbb{N}.
\]
\item[\textup{(b)}] (\textit{Supercritical regime.})
For any $\lambda_0>\lambda_T$, there exists a constant $c_0>0$ such that for all $\lambda\in[\lambda_T,\lambda_0]$,
\[
\theta(\lambda) \ge c_0\,(\lambda-\lambda_T).
\]
\end{enumerate}
In particular, the critical parameters coincide:
\begin{align}\label{eq:lc=lt}
    \lambda_c= \lambda_T.
\end{align}
\end{theorem}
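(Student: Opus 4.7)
The plan is to follow the Duminil-Copin--Raoufi--Tassion paradigm, but carried out on a suitable \emph{finite-volume, discretized} proxy for the continuum model and then passed to the limit. Concretely, I would fix a box $\Lambda_L = [-L,L]^d$, truncate weights to a bounded range $[1,n]$, and discretize $\Lambda_L$ at scale $1/n$ to obtain a finite Bernoulli-type model in which both the point locations and the edge decisions are functions of finitely many independent random variables. This finite model $\plhfd$ is the natural setting for OSSS-type arguments, since the susceptibility (or a truncated analogue) becomes a monotone Boolean function of independent inputs. The first step is thus to verify that the finite model is a good approximation: as $n\to\infty$ and then $L\to\infty$, the finite susceptibility $\chi^{L,n}_\lambda$ and the finite tail $\theta^{L,n}_\lambda(k)$ converge to their continuum counterparts, uniformly on compact $\lambda$-intervals. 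Here Assumptions~(A), (NB), and --- in the min-reach case --- (EMC), are used to control the contribution of long edges and heavy vertices that escape the truncation.

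The second step is to apply OSSS to a well-chosen randomized decision procedure and derive the key differential inequality. I would adapt Hutchcroft's algorithm: pick a uniformly random seed vertex $z$ and perform a breadth-first exploration of its cluster, revealing edges only as needed. This makes the revealment of any edge $e$ bounded by the probability that $e$ is incident to a cluster reaching the seed, which in the continuum with spatial intensity $\lambda$ becomes an integral of $\varphi$-weighted two-point functions. Combining with the OSSS bound for the indicator $\mathbf 1\{|\C(\bar 0,1)|\ge k\}$ and differentiating in $\lambda$ (the standard Russo-type identity in the finite model) yields an inequality of the schematic form
\[
\frac{\dd}{\dd\lambda}\theta_\lambda^{L,n}(k) \;\geq\; \frac{c}{\lambda\, \Sigma_\lambda^{L,n}(k)}\,\theta_\lambda^{L,n}(k)\bigl(1-\theta_\lambda^{L,n}(k)\bigr),
\]
where $\Sigma_\lambda^{L,n}(k)$ is a truncated susceptibility-type quantity. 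This is the heart of the argument and the place where the weighted, long-range structure enters.

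The main obstacle --- and where the two models diverge --- is bounding the revealment contribution of \emph{rare but geometrically influential} vertices: a single heavy vertex has a large reach $R(a)$ and can single-handedly inflate the revealment of far-away edges. In the non-weighted case this is handled by a direct second-moment-style estimate on $\varphi$ via (NB). In the min-reach case I would follow the scheme introduced by Duminil-Copin--Raoufi--Tassion for Poisson-Boolean percolation with unbounded radii: split the influence sum according to the weight of the heavier endpoint, use the deterministic reach bound $R$ to localize the geometric effect of each weight class, and absorb the tail using (EMC). The exponential moment on $R(m)^{d+\varepsilon}$ is exactly what is needed to dominate the polynomial geometric cost of reach $R(m)$ summed against $\pi$, and to ensure the resulting bound on $\Sigma_\lambda^{L,n}(k)$ survives the limit $n\to\infty$, $L\to\infty$.

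The final step is the standard consequence of such a differential inequality, which I would first establish for $\chi_\lambda^{L,n}$ and then pass to the continuum limit using step one. In the subcritical regime $\lambda<\lambda_T$, one extracts exponential decay of $\theta_\lambda(k)$ in $k$ by the usual integration argument of~\cite{DRT19b,Hut20} applied to the sequence $(\theta_\lambda(k))_k$, which in particular yields (a). In the supercritical regime, the same differential inequality for $\theta(\lambda) = \lim_{k\to\infty}\theta_\lambda(k)$ gives a linear lower bound of the form $\theta(\lambda)\ge c_0(\lambda-\lambda_T)$ on any compact interval $[\lambda_T,\lambda_0]$, which is (b). Finally, $\lambda_c=\lambda_T$ follows: the linear lower bound forces $\theta(\lambda)>0$ for every $\lambda>\lambda_T$, so $\lambda_c\le \lambda_T$, and the reverse inequality is already provided by Lemma~\ref{lem:ex_phase_transition}.
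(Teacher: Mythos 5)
Your blueprint — discretize, apply OSSS via an exploration algorithm, derive a Hutchcroft/DRT-style differential inequality, then pass $n\to\infty$ and $L\to\infty$ — matches the paper's overall architecture, and your final integration step and the deduction of $\lambda_c=\lambda_T$ from Lemma~\ref{lem:ex_phase_transition} is exactly right. However, the heart of the min-reach case is not really argued, and that is where the paper's genuine technical content lies. You write that you would ``split the influence sum according to the weight of the heavier endpoint, use the deterministic reach bound $R$ to localize the geometric effect of each weight class, and absorb the tail using (EMC).'' This is a reasonable aspiration, but it does not explain \emph{how} the influence or revealment of a weight-$m$ vertex is to be compared to anything one can actually sum over the Russo formula. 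After OSSS one is left with a sum over all vertices $(\bar u,m)$ of $\delta_{\tilde\gamma}(\bar u,m)\,\textsc{Inf}^P_{(\bar u,m)}$, and the Russo formula produces only the \emph{weight-integrated} pivotality sum $\sum_{\bar u,m}2^{-nd}\Pi(m,n)\mathscr P^k(\bar u,m)$. To bridge these, the paper isolates Assumption~\ref{mainass}: one needs ratio bounds $\delta_{\tilde\gamma}(\bar u,m)/\delta_{\tilde\gamma}(\bar u,1)\le\mathfrak R_\lambda(m)$ and $\sum_{\bar u}\mathscr P^k(\bar u,m)/\sum_{\bar u}\mathscr P^k(\bar u,1)\le\mathfrak r_\lambda(m)$, and it is only \emph{these} functions of $m$ that are then integrated against $\pi$ and controlled by~\eqref{assumption_moment_condition}. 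Your proposal says nothing about why the influence of a single heavy vertex is comparable (up to an $m$-dependent factor) to that of a weight-one vertex at the same location; without such a reduction, ``splitting by weight class'' does not close the argument.

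The paper's mechanism for the min-reach case (Section~\ref{sect:Pf_mainass}) is concrete and nontrivial: partition the reach cylinder $Z_{R(m)}(\bar u)$ into $M\lesssim R(m)^d$ small cylinders, use Lemmas~\ref{lem:aux_rev}/\ref{lem:aux_inf} to control, with uniform probability, the number of independent cluster components touching each cylinder, and then build a chain of forced-open weight-one vertices connecting $(\bar u,1)$ to the components formerly attached to $(\bar u,m)$. For revealments this is a straightforward FKG argument (Theorem~\ref{thm:ta_rev}); for the pivotality probabilities, pivotality is \emph{not} monotone, so FKG fails and one needs the more delicate iterative case distinction of Theorem~\ref{thm:ta_inf}. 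This non-monotonicity issue is entirely absent from your sketch and would block a naive ``localize then absorb'' attempt. Two secondary differences are also worth noting. First, the paper deliberately does \emph{not} truncate the weights in the main discrete model (``we discretize but do not truncate''), which forces it to prove a Russo-type derivative formula on an infinite graph (Corollary~\ref{derivative_cluster_tail_discrete}); your $[1,n]$-truncation would instead require controlling an extra truncation limit, using (EMC) again, which is possible but not what the paper does. Second, the exploration scheme you describe (uniformly random seed, BFS) is the original DRT-style algorithm; the paper uses Hutchcroft's ghost-field version (green copy vertices, one decision tree per vertex), whose revealment probability is exactly a magnetization $E[1-e^{-\tilde\gamma|\mathcal C|}]$, and this specific form is what makes Assumption~\ref{mainass}(a) tractable for the min-reach model.
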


\begin{remark}
    Notice that identity~\eqref{eq:lc=lt} is known for the non-weighted RCM (see~\cite{Mee95}) and in the Poisson-Boolean case (see~\cite{DRT20}) under polynomial moment conditions for the radii distribution. Very recently, for the non-weighted RCM, the exponential decay from the first part of the statement was shown by Higgs~\cite{Hig25} using an alternative method.
\end{remark}

\subsection{Discussion}\label{subsect:discuss}

In this section, we first comment on the assumptions and scope of our framework ---
before discussing the main results and their relation to previous work.
We begin by examining the hypotheses, since they delineate the class of models
to which our methods apply and highlight the points where further
generalization is possible.

The broad
structural setting in which our analysis takes place is established by 
Assumption~(A) together with the neighborhood bound~(NB). The former encodes isotropy, symmetry, and monotonicity of the adjacency function, while the latter provides uniform bounds on the expected neighborhood size of a vertex.
These hypotheses are deliberately formulated in a minimal and flexible way.
They capture the essential geometric and probabilistic properties needed for
the OSSS analysis and for the existence of a non-trivial phase transition,
while remaining broad enough to include a large class of continuum percolation models.  
In particular, Assumptions~(A)–(NB) are satisfied by the classical (non-weighted) random connection model~\cite{Pen91},  
by the Poisson–Boolean model~\cite{DRT20,LPY21},  
by finite-range random connection models such as those analyzed in~\cite{FM19},  
and by their marked or weighted versions~\cite{DH22,CD24,GHMM22}.    
Several relaxations would remain compatible with our proofs. 
For instance, spatial isotropy in~(A.1) could be weakened to mere translation
invariance, allowing the connection probability to depend on the relative
orientation of vertices, and the monotonicity assumptions in~(A.2)--(A.3)
could hold only asymptotically without affecting the overall argument.

Assumption~\textup{(NB)} plays a key technical role in our framework.
It provides two-sided uniform control on the expected number of potential neighbors of any vertex. The upper bound allows us to bound the local cluster exploration from above by suitable Galton–Watson branching processes of finite expectation and thereby to obtain the positivity of $\lambda_T$. The lower bound, on the other hand, allows us to view our random graph as an augmentation of an unweighted finite-range random connection model. Since the latter is known to percolate, we can deduce the finiteness of $\lambda_c$.
The uniform neighborhood bound~\textup{(NB)} is therefore tailored to yield a two-sided domination for a point neighborhood;
it is sufficient but not always necessary for the existence of a non-trivial phase transition.
In particular, for scale-free or inhomogeneous long-range models (as in~\cite{DvHH11}), the upper bound in~(NB) is not satisfied, yet a phase transition has been established by different means.
A detailed argument establishing the coupling is provided in Appendix~\ref{AII}.

The additional \emph{min-reach condition}~\eqref{ass:eff-fin-range}
marks the specific subclass of weighted RCM for which we establish sharpness in full
detail. 
Within this setting we manage to control the influences and revealment probabilities of
vertices with arbitrarily large weights, relating them quantitatively through
the geometry imposed by the weights. This control is essential for adapting the OSSS framework to weighted
continuum percolation and for deriving the key differential inequality that
drives our proof.
Although somewhat restrictive, the min-reach condition represents a natural
starting point for a systematic study of continuum percolation with
unbounded weights. In particular, it includes Poisson-Boolean models with random unbounded radii. 
In future work, one could aim to replace the deterministic cutoff by a
quantitative decay condition on the connection function, for instance
\[
\frac{1}{r^{\alpha_1}}
   \,\leq\, \varphi(r;1,1)
   \,\leq\, \frac{1}{r^{\alpha_2}}
   \qquad\text{for some }\alpha_1\ge \alpha_2>d,
\]
or to impose the neighborhood control only in expectation rather than almost
surely.
Such generalizations would bring the present framework closer to that of
long-range or scale-free percolation and could likely be handled by suitable
refinements of our arguments.

In our definition of the min-reach model we did not assume~\textup{(NB)} explicitly.  
However, this condition is in fact implied by the geometric constraint~\eqref{ass:eff-fin-range} together with a mild integrability assumption on the weight distribution.  
Indeed, one may impose the moment condition
\[
    \int_1^\infty R(m)^d\,\pi(\mathrm{d}m) < \infty,
\]
which, combined with~\eqref{ass:eff-fin-range}, entails~\eqref{ass:mc_NB} and thus guarantees the non-triviality of the phase transition.  
More precisely, the geometric cutoff~\eqref{ass:eff-fin-range} automatically provides the lower bound in~\textup{(NB)}, 
while the exponential moment condition~\eqref{assumption_moment_condition} ensures the corresponding upper bound, 
so that together they yield the full two-sided neighborhood control.

It is important to emphasize that the OSSS-based analysis developed in Section~\ref{sect:2} 
is valid under Assumptions~\textup{(A)} and~\textup{(NB)} alone; it does not require the min-reach
hypothesis.  
The latter condition is introduced to obtain uniform
bounds on influences and revealments when weights become large, ensuring that the
technical assumptions behind our differential inequality can be verified.
In particular, the OSSS exploration and the derivative formula are formulated
and proved in a considerably more general setting than the one ultimately used
for sharpness.  

\paragraph{Relation to previous work.}
The present paper contributes to a long line of research on continuum percolation models, from the classical Gilbert graph~\cite{Gil61} and Penrose’s random connection model~\cite{Pen91} (see also~\cite{LasZie17, JKM23, HHLM22}) to modern extensions incorporating marks or weights~\cite{DH22,CD24}.
Penrose already considered unbounded connection ranges but without random weights; in contrast, weighted or marked models such as those studied by Dickson and Heydenreich~\cite{DH22} and by Caicedo and Dickson~\cite{CD24} assume a uniformly bounded influence of marks. Similarly, Pabst~\cite{Pab25} recently investigated higher-dimensional simplicial structures in marked RCMs, but again under bounded-range assumptions. 
Our work establishes sharpness of the phase transition for random connection models that allow for both \emph{infinite range of interaction} and \emph{unbounded weights}. Together with the very recent work of Chebunin and Last~\cite{CL25}, this places the present results among the first sharpness results in such settings, extending the class of weighted models for which sharpness is known beyond the Poisson–Boolean case~\cite{DRT20}.

From a methodological standpoint, our approach also extends the scope of several key analytical tools in modern percolation theory.  
We build on the OSSS inequality of O’Donnell, Saks, Schramm, and Servedio~\cite{OSSS05}, which Duminil-Copin, Raoufi, and Tassion adapted to the continuum setting to prove sharpness for Poisson–Boolean percolation with unbounded radii~\cite{DRT20}.  
At the same time, we draw on Hutchcroft’s algorithmic exploration method~\cite{Hut20}, which provides a discrete analogue of OSSS arguments for the random-cluster model.  
Our proof merges these two lines of reasoning: Hutchcroft’s exploration algorithm is extended to a weighted, long-range continuum model, while the influence bounds from~\cite{DRT20} are generalized to control rare vertices with very large weights.  
This combination allows us not only to prove sharpness under new structural conditions, but also to demonstrate that the OSSS framework and Hutchcroft’s algorithmic analysis remain robust tools even beyond discrete or finite-range settings.

Using a kindred line of reasoning, applying OSSS after discretizing space, Faggionato and Mimun~\cite{FM19} obtained exponential decay for the diameter of clusters in finite-range RCM and Poisson–Boolean models. It is instructive to compare such use of the OSSS inequality with that of Last, Peccati, and Yogeshwaran~\cite{LPY21}.
The latter work introduces stopping-set techniques that extend the OSSS inequality directly to continuum models, thereby avoiding any need for spatial discretization.
This approach is particularly elegant because it translates the notion of decision trees and revealments --- originally defined for discrete Boolean functions —-- into a geometric and measure-theoretic framework, providing a conceptual bridge between discrete and continuum structures.  Despite the fascinating methodology, the sharpness results therein also apply only to finite-range percolation models. 

Infinite-range models remained out of reach until, very recently, a completely different method was used by Higgs~\cite{Hig25} to show exponential decay of the cluster size in the infinite-range RCM --- under minimal assumption on the adjacency function. We learned about this exciting alternative approach --- building upon a stochastic comparison technique due to Vanneuville~\cite{Van24} --- while finalizing this work. In parallel, Chebunin and Last~\cite{CL25} independently obtained sharp phase transition results for random connection models, including weighted settings, by a different approach.

 Another interesting development is due to Hirsch, Jahnel, and Muirhead~\cite{HJM22}, who proved sharp phase transition for a particular class of Cox percolation models via the OSSS inequality in combination with a coarse-graining technique. Thereby they extended the OSSS framework to random environments with spatial dependence, where the vertex process itself is a Cox process, i.e., a Poisson process driven by a random intensity measure.

Meanwhile, the lace-expansion program for high-dimensional random connection models~\cite{HHLM22} and its extension to the marked case~\cite{DH22} established mean-field exponents but required bounded marks.
Our work complements these approaches by providing a unified algorithmic framework that captures sharpness in an infinite-range, unbounded-marks setting, while remaining consistent with the general theory of marked, inhomogeneous percolation models developed in~\cite{GHMM22, DvHH11}.
We hope that this work serves as a blueprint for analyzing a broad class of weighted RCMs, illustrating how sharpness results can be established in such general settings --- with the min-reach assumption providing a natural sufficient condition under which sharpness persists as a particular example.

\subsection{Proof structure}

The paper is organized as follows: In Section 2.1, we approximate our model by looking at a finite window of the Euclidean space and cutting that weighted space into small boxes. We then consider a Bernoulli percolation on the complete graph on those boxes, which are open with the probability of containing at least one Poisson point scattered with intensity $\lambda$; an edge is open with a probability governed by our adjacency function $\varphi$. In that setting, we derive a Russo-type derivative formula in Section 2.2, Corollary~\ref{derivative_cluster_tail_discrete} (which is non-trivial since the underlying graph is infinite: we discretize but do not truncate the weights). In Sections 2.3 and 2.4, following an ingenious idea of Duminil-Copin, Raouffi and Tassion, we employ an exploration scheme and the corresponding OSSS inequality~\eqref{est:ourOSSS}, to get an estimate on the probability of a large cluster containing the origin. The exploration scheme is a modification of Hutchcroft's algorithm from~\cite{Hut20} that accounts for the specifics of our Bernoulli percolation.  In Section~\ref{sect:3}, we combine the OSSS estimate with the derivative formula to obtain a differential inequality for the tail of the cluster size (Proposition~\ref{thm:4main}). The key step in the proof of the proposition is to show that a technical assumption is satisfied (Assumption~\ref{mainass}) which we show for the min-reach RCM in Section~\ref{sect:Pf_mainass}. In Section~\ref{sect:Inf_vol_lim}, we show in the finite-volume discrete setting that the differential equation implies exponential decay in the subcritical regime $\lambda<\lambda _T$ (Lemma~\ref{lem:cfv-diff}) and the first part of our main result Theorem~\ref{thm:Susceptibility Mean-Field Bound} follows by taking the appropriate limits (Proposition~\ref{prop:UB_sus}). Similar arguments in the spirit of~\cite{Hut20} allow us to show a positive probability of the infinite cluster containing the origin in the supercritical regime $\lambda>\lambda_T$ (Proposition~\ref{prop:LB_sus}).

\section{Discretizing, employing an exploration scheme and using the OSSS inequality}\label{sect:2}

Before deriving the differential inequality leading to our main result, we first introduce a lattice approximation of our model. This allows us to apply the exploration-scheme approach of Hutchcroft~\cite{Hut20}, which is naturally formulated in a discrete setting. In particular, we discretize both the spatial positions and the weight marks.

One might expect that a continuum analogue of Hutchcroft’s arguments is obtained in the spirit of Last, Peccati, and Yogeshwaran~\cite{LPY21}. Their framework, however, relies on finite-range interactions, while our adjacency function is long-range, and the corresponding adaptation is not presently available. Since the discretized version already provides the ingredients we need for our proof, we follow this simpler and more direct route.

\subsection{Approximation of lattice models}

We now describe the discrete version of our model, obtained by replacing space and weights by suitable grids and defining a product Bernoulli measure on the resulting graph.

We follow the classical idea of finite lattice approximation, similar to the approach in Meester (Section 3 of \cite{Mee95}), but in our case we also need to discretize (but not truncate) the weights. Indeed, we consider two integers $L,n \in \N$ and define the cylinder $\Lambda_n^L = [-L - 2^{-n - 1},L + 2^{-n - 1}]^d \times [1,\infty)$ which we then divide into small boxes of side length $2^{-n}$. Afterwards, we put a vertex inside each box, which we consider to be open independently, with a probability equal to the probability that the Poisson point process of intensity $\lambda$ has at least one point present inside that box. Next, we draw edges between vertices according to the original probabilities prescribed by $\varphi$, independently for every pair of vertices. This leads to a Bernoulli percolation model on both the vertices and the edges of a complete graph.

More precisely, we consider the (infinite) graph $G_n^L = (V_n^L,E_n^L)$, characterized as follows: $V_n^L:=U^L_n\times M_n$ is the set of vertices, where
\begin{align}
U_n^L := \left\{\, \Bar{u} \in 2^{-n}\mathbb{Z}^d : \Bar{u}_i \in [-L,L] \ \text{for all } i \in \{1,\dots,d\}\, \right\}
\end{align}
and 
\begin{align}
M_n:=\{m\in2^{-n}\N \mid m=1 + \frac{l}{2^n},\, l \in \N\}
\end{align}
if $\pi$ is absolutely continuous; if $\pi$ is discrete, we set $M_n$ to be its support. Moreover, for $m\in M_n$, let us denote $\pi([m,m+2^{-n}])$ by $\Pi(m,n)$ if $\pi$ is absolutely continuous and set $\Pi(m,n)=\pi(\{m\})$ if $\pi$ is discrete. Recall that the map $m\mapsto\Pi(m,n)$ is non-increasing on $[1,\infty)$, for every $n\in\N$, by our assumptions on $\pi$. The edge set $E_n^L$ are all possible distinct pairs of vertices, i.e., we consider the complete graph on $V_n^L$. 

Next, we construct a probability measure on the configuration space $\{0,1\}^{V_n^L\cup E_n^L}$ as follows: For a given realization $\omega\in \{0,1\}^{V_n^L\cup E_n^L}$ and a vertex $(\Bar{u},m)\in V_n^L$, we say that $(\Bar{u},m)$ is open in $\omega$ if $\omega((\Bar{u},m)) = 1$ and we say that it is closed if $\omega((\Bar{u},m)) = 0$. We use the same terminology for the edges and define $P^{L,n}_{\lambda}$ as the probability measure on $\{0,1\}^{V_n^L\cup E_n^L}$  given by the inhomogeneous Bernoulli percolation on both sites and edges such that, for every $u\in V^{L}_n$ and $e\in E^{L}_n$, we have
\begin{align*}
P^{L,n}_{\lambda}\left(u = (\Bar{u},m) \textit{ is open})\right) &=1-\e^{-\lambda 2^{-nd} \times \Pi(m,n)}, \\ P^{L,n}_{\lambda}\left(e=((\Bar{u}_1,m_1),(\Bar{u}_2,m_2)) \textit { is open}\right) &= \varphi(\abs*{\Bar{u}_1 - \Bar{u}_2},m_1,m_2).
\end{align*}

Notice that the states of all vertices and edges are stochastically independent of one another (in particular, the openness of an edge does not depend on whether its endpoints are open). Moreover, the probability of an edge to be open is constant in the intensity $\lambda$.

For a vertex $v\in V_n^L$, we denote by $\mathcal{C}(v) := \mathcal{C}_n^L(v,\omega)$ the (open) cluster of $v$ which is defined as the set of all open vertices $v'$ connected to it by an open path in $\omega\in \{0,1\}^{V_n^L\cup E_n^L}$. More precisely, $v'\in \mathcal{C}(v)$ if there exists an $l\in\N$ and a sequence of open vertices $(v_1,\dots, v_l = v')$ such that with the convention $v_0=v$ the edges $e_i=\left(v_{i-1},v_i\right)$ are also open, for each $i \in \{1,..,l\}$. Note the slightly unusual convention that we do not require the vertex $v$ itself to be open but always include it in the cluster. We omit the indices to lighten the cluster notation; nevertheless, we use a different calligraphy so it is not confused with its continuum counterpart.

Furthermore, for $k\in\N$, we define 
\begin{align}
\theta_\lambda^{L,n}(k)=P^{L,n}_\lambda(\vert \mathcal{C}(\Bar{0},1)\vert\geq k).
\end{align}

Before proceeding, two important remarks about this construction deserve consideration.

\begin{remark}
Working in finite volume breaks translation invariance: points near the center and near the boundary behave differently with respect to connectivity. One could correct for this, as in \cite{Mee95}, by modifying the connection function --- for example, defining
\[
1 - \varphi_L(\bar{x},a,b)
    = 1 - \prod_{\bar{z}\in\mathbb{Z}^d} \bigl(1 - \varphi(\bar{x} + 2L\bar{z},a,b)\bigr).
\]
We choose not to adopt this modification, as translation invariance is not required for our analysis. Indeed, Assumption~\textup{(A)} implies $\,E_\lambda^{L,n}[\vert \mathcal{C}(\Bar{0},m)\vert]\geq E_\lambda^{L,n}[\vert \mathcal{C}(\Bar{u},m)\vert]\,$ for all $L,n\in\N$, $\Bar{u}\in U_n^L$, $m\in M_n$ and $\lambda>0$, which is sufficient for our results. Furthermore, note that our arguments still work if this inequality holds only up to a positive multiplicative constant, which may prove to be useful in slightly different settings.
\end{remark}

\begin{remark}
From a measure-theoretic perspective, introducing a new probability measure for the discretized model is not strictly necessary. However, the analysis presented in this section might be of independent interest for researchers working on discrete percolation, and it is more natural to formulate it in the traditional discrete setting. Furthermore, to deduce our continuum results, it suffices to establish convergence of specific quantities rather than full convergence of probability measures; this is carried out in Appendix~\ref{AIII}.
    
\end{remark}

\subsection{Derivative formula}\label{subsetion:Russo}

The graph $G_n^L = (V_n^L,E_n^L)$ is an infinite graph; however, under the measure $P^{L,n}_{\lambda}$, its open components are almost surely finite for any $\lambda>0$. Indeed, even though the number of open edges can be infinite, the number of open vertices is almost surely finite, which allows us to obtain a Margulis-Russo-type differential formula (compare to the formula in Appendix~\ref{AI}).

The idea is to first consider a finite graph in which we delete all vertices with weight above a certain threshold, apply a derivative formula for local functions, and then take limits to derive the formula on the infinite graph.

Fix $\lambda>0$ and $L,n\in\mathbb{N}$. Let $H\in\N$ with $H\geq 2$, and consider the finite graph $G_H = (V_H,E_H)$, where  
\[
V_H := U_n^L \times M_H,
\qquad 
M_H := \{m\in M_n \mid m \le H\},
\]
and $E_H$ is the set of all possible pairs of distinct vertices in $V_H$.

Define the configuration space
\[
\Omega_H := \{0,1\}^{V_H\cup E_H},
\]
and define the probability measure $P_H$ as the Bernoulli product measure on $\Omega_H$ such that, for any $x\in V_H\cup E_H$, we have
\[
P_H(x \text{ is open}) = \mulfd(x \text{ is open}).
\]  

For any $\omega\in\Omega_H$, we may write
\begin{align}\label{eq:PH-explicit}
    P_H(\omega)
    = \Bigg[\prod_{(\bar u,m)\in U_n^L\times M_H}
       p_\lambda(\bar u,m)^{\omega((\bar u,m))}
       (1-p_\lambda(\bar u,m))^{1-\omega((\bar u,m))}\Bigg]\,
      \alpha(\omega),
\end{align}
where
\[
p_\lambda(\bar u,m)=1-e^{-\lambda 2^{-nd}\Pi(m,n)},
\]
and 
\[
\alpha(\omega)
= \prod_{e\in E_H}P_H(e\text{ open})^{\omega(e)}P_H(e\text{ closed})^{1-\omega(e)}.
\]
Note that $\alpha(\omega)$ does not depend on $\lambda$.

We set $I_{(\bar u,m)}=\omega((\bar u,m))$, denote by $E_{P_H}$ the expectation under $P_H$, and by $\mathrm{Cov}_{P_H}$ the corresponding covariance.

\medskip

\begin{lemma}\label{fin_graph_derivative}
    Let $F:\Omega_H\to\mathbb{R}$. Then for every $\lambda>0$,
    \begin{align}\label{eq:finite-derivative}
        \frac{\mathrm{d}}{\mathrm{d}\lambda}E_{P_H}[F]
        = \sum_{(\bar u,m)\in U_n^L\times M_H}
        \frac{2^{-nd}\Pi(m,n)}{p_\lambda(\bar u,m)}\,
        \mathrm{Cov}_{P_H}(F,I_{(\bar u,m)}).
    \end{align}
\end{lemma}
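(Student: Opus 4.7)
The proof is a direct application of the chain rule to the classical Margulis--Russo formula for Bernoulli product measures, exploiting the explicit form \eqref{eq:PH-explicit}. Since $V_H$ and $E_H$ are finite, $\Omega_H$ is a finite set, and so $E_{P_H}[F]=\sum_{\omega\in\Omega_H}F(\omega)\,P_H(\omega)$ is a finite sum; hence the derivative in $\lambda$ commutes with the sum without any integrability issues.

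The plan is as follows. First I would recall that only the vertex factors in \eqref{eq:PH-explicit} depend on $\lambda$, while the edge factor $\alpha(\omega)$ is $\lambda$-independent. Applying the product rule, the $\lambda$-derivative of $P_H(\omega)$ is the sum over $(\bar u,m)\in U_n^L\times M_H$ of the derivative of the $(\bar u,m)$-factor, multiplied by the remaining vertex factors and by $\alpha(\omega)$. A short calculation gives
\[
\frac{\mathrm d}{\mathrm d\lambda}\bigl[p_\lambda(\bar u,m)^{I}(1-p_\lambda(\bar u,m))^{1-I}\bigr]
=\frac{p_\lambda'(\bar u,m)}{p_\lambda(\bar u,m)(1-p_\lambda(\bar u,m))}\,\bigl(I-p_\lambda(\bar u,m)\bigr)\,p_\lambda(\bar u,m)^{I}(1-p_\lambda(\bar u,m))^{1-I},
\]
for $I\in\{0,1\}$. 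From $p_\lambda(\bar u,m)=1-e^{-\lambda 2^{-nd}\Pi(m,n)}$ one computes
\[
p_\lambda'(\bar u,m)=2^{-nd}\Pi(m,n)\bigl(1-p_\lambda(\bar u,m)\bigr),
\]
so the prefactor simplifies to $\frac{2^{-nd}\Pi(m,n)}{p_\lambda(\bar u,m)}$.

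Plugging this back into the sum representation of $E_{P_H}[F]$ yields
\[
\frac{\mathrm d}{\mathrm d\lambda}E_{P_H}[F]
=\sum_{(\bar u,m)\in U_n^L\times M_H}\frac{2^{-nd}\Pi(m,n)}{p_\lambda(\bar u,m)}\,E_{P_H}\!\bigl[F\cdot\bigl(I_{(\bar u,m)}-p_\lambda(\bar u,m)\bigr)\bigr].
\]
Since under $P_H$ the indicator $I_{(\bar u,m)}$ is Bernoulli with mean $p_\lambda(\bar u,m)$, we have $E_{P_H}[I_{(\bar u,m)}]=p_\lambda(\bar u,m)$, and therefore
\[
E_{P_H}\!\bigl[F\cdot\bigl(I_{(\bar u,m)}-p_\lambda(\bar u,m)\bigr)\bigr]
=E_{P_H}[F\cdot I_{(\bar u,m)}]-E_{P_H}[F]\,E_{P_H}[I_{(\bar u,m)}]
=\mathrm{Cov}_{P_H}\!\bigl(F,I_{(\bar u,m)}\bigr),
\]
which gives precisely \eqref{eq:finite-derivative}.

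There is no genuine obstacle here beyond careful bookkeeping; the only point worth stressing in the write-up is that finiteness of $\Omega_H$ (ensured by restricting to weights $\le H$) is what makes termwise differentiation trivially justified, and that the edge factor $\alpha(\omega)$ drops out because it does not depend on $\lambda$. The choice of the specific parametrization $p_\lambda(\bar u,m)=1-e^{-\lambda 2^{-nd}\Pi(m,n)}$ is what produces the simple $1/p_\lambda(\bar u,m)$ prefactor; any other smooth dependence of the site parameter on $\lambda$ would give the same shape of identity with $p_\lambda'(\bar u,m)/(p_\lambda(\bar u,m)(1-p_\lambda(\bar u,m)))$ as coefficient. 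This lemma then serves, after a suitable limit $H\to\infty$, as the starting point for the Russo-type derivative formula stated in Corollary~\ref{derivative_cluster_tail_discrete}.
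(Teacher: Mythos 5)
Your proof is correct and follows essentially the same route as the paper: product-rule differentiation of the vertex factors in \eqref{eq:PH-explicit}, noting that $\alpha(\omega)$ is $\lambda$-independent, simplifying $p_\lambda'/(p_\lambda(1-p_\lambda))$ to $2^{-nd}\Pi(m,n)/p_\lambda$, and recognizing the resulting expectation as a covariance. The only cosmetic difference is that you package the per-coordinate derivative as a multiple of $(I-p_\lambda)$ while the paper writes it as a multiple of $(I/p_\lambda-1)$; these are algebraically identical.
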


\begin{remark}
   Since we consider percolation on both edges and vertices, the measure $P_H$ corresponds to the product ($q=1$) case of the random-cluster measure from~\cite{Hut22}.  
   Formula~\eqref{eq:finite-derivative} is the analogue of~\cite[Eq.\ (2.1)]{Hut22}. For this proof, replicated here for the reader's convenience, we follow the ideas from~\cite[Proposition 4]{BGK93}.
\end{remark}

\begin{proof}
    For each $(\bar u,m)\in U_n^L\times M_H$,
    \[
        \frac{\mathrm{d}}{\mathrm{d}\lambda}p_\lambda(\bar u,m)
        = 2^{-nd}\Pi(m,n)\bigl(1-p_\lambda(\bar u,m)\bigr).
    \]

    The derivatives of the two factors in~\eqref{eq:PH-explicit} are
    \begin{align*}
        \frac{\mathrm{d}}{\mathrm{d}\lambda}
        p_\lambda(\bar u,m)^{I_{(\bar u,m)}}
        &= I_{(\bar u,m)}\,2^{-nd}\Pi(m,n)
           (1-p_\lambda(\bar u,m))
           p_\lambda(\bar u,m)^{I_{(\bar u,m)}-1}, \\
        \frac{\mathrm{d}}{\mathrm{d}\lambda}
        (1-p_\lambda(\bar u,m))^{1-I_{(\bar u,m)}}
        &= -(1-I_{(\bar u,m)})\,2^{-nd}\Pi(m,n)
            (1-p_\lambda(\bar u,m))^{1-I_{(\bar u,m)}}.
    \end{align*}

    Combining these contributions and using that $\alpha(\omega)$ is $\lambda$-independent yields
    \begin{align}\label{eq:derivative-of-density}
        \frac{\mathrm{d}}{\mathrm{d}\lambda}P_H(\omega)
        = \sum_{(\bar u,m)}
          2^{-nd}\Pi(m,n)
          \left(\frac{I_{(\bar u,m)}}{p_\lambda(\bar u,m)}-1\right)
          P_H(\omega).
    \end{align}

    Differentiating under the sum yields
    \begin{align*}
        \frac{\mathrm{d}}{\mathrm{d}\lambda}E_{P_H}[F]
        &= \sum_{\omega\in\Omega_H}
           F(\omega)\frac{\mathrm{d}}{\mathrm{d}\lambda}P_H(\omega) \\
        &= \sum_{(\bar u,m)}2^{-nd}\Pi(m,n)\,
           E_{P_H}\!\left[
              F\left(\frac{I_{(\bar u,m)}}{p_\lambda(\bar u,m)}-1\right)
           \right].
    \end{align*}

    Finally,
    \[
        E_{P_H}\!\left[F\left(\frac{I_{(\bar u,m)}}{p_\lambda(\bar u,m)}-1\right)\right]
        = \frac{1}{p_\lambda(\bar u,m)}\,
          \mathrm{Cov}_{P_H}(F, I_{(\bar u,m)}),
    \]
    which gives~\eqref{eq:finite-derivative}.
\end{proof}

For a big class of functions, we can extend the preceding lemma to the infinite graph $G_n^L = (V_n^L,E_n^L)$. In particular, if we fix $k \in \N$, the vertex $(\bar{0},1) \in V_n^L$ and consider the function $\mathds{1}_{\{\abs*{\mathcal{C}(\bar{0},1)} \geq k\}}$, then we obtain the following result.

\begin{corollary}
 \label{derivative_cluster_tail_discrete}
    For any $k \in \N$ and $\lambda > 0$, we have 
    \begin{align}\label{eq:derivative_cluster_tail_discrete}
        \frac{\dd}{\dd \lambda}\,
        \mulfd\big(\abs*{\mathcal{C}(\bar{0},1)} \geq k\big)
        = \sum_{(\bar{u},m) \in V_n^L}
          \frac{2^{-nd}\Pi(m,n)}{ p_\lambda(\bar{u},m)}\,
          \mathrm{Cov}_{\mulfd}\big(\mathds{1}_{\{\abs*{\mathcal{C}(\bar{0},1)} \geq k\}},
          I_{(\bar{u},m)}\big).
    \end{align}
\end{corollary}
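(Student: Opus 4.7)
The plan is to invoke the finite-graph identity of Lemma~\ref{fin_graph_derivative} on the truncated graph $G_H$ from Section~\ref{subsetion:Russo} and to pass to the limit $H\to\infty$. Writing $\mathcal{C}_H(\bar{0},1)$ for the cluster of $(\bar{0},1)$ computed inside $G_H$, the central observation is that under $\mulfd$ the cluster $\mathcal{C}(\bar{0},1)$ is almost surely finite: the total vertex mass $\sum_{(\bar u,m)\in V_n^L}p_\lambda(\bar u,m)$ is finite because $|U_n^L|<\infty$ and $\sum_{m\in M_n}\Pi(m,n)=1$, so Borel--Cantelli yields only finitely many open vertices almost surely. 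Since $\mathcal{C}_H(\bar{0},1)\subseteq\mathcal{C}(\bar{0},1)$, with equality once $H$ exceeds the maximal weight appearing in $\mathcal{C}(\bar{0},1)$, the indicators $\mathds{1}_{\{|\mathcal{C}_H(\bar{0},1)|\ge k\}}$ increase monotonically to $\mathds{1}_{\{|\mathcal{C}(\bar{0},1)|\ge k\}}$ almost surely. Monotone convergence then yields pointwise convergence $P_H(|\mathcal{C}_H(\bar{0},1)|\ge k)\to\mulfd(|\mathcal{C}(\bar{0},1)|\ge k)$ for every $\lambda>0$.

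Next, applying Lemma~\ref{fin_graph_derivative} on $\Omega_H$ to the bounded function $F_H=\mathds{1}_{\{|\mathcal{C}_H(\bar{0},1)|\ge k\}}$ gives
\[
\frac{\mathrm d}{\mathrm d\lambda}\,P_H(|\mathcal{C}_H(\bar{0},1)|\ge k)
= \sum_{(\bar u,m)\in V_H}\frac{2^{-nd}\Pi(m,n)}{p_\lambda(\bar u,m)}\,
\mathrm{Cov}_{P_H}\!\bigl(F_H,\,I_{(\bar u,m)}\bigr).
\]
For every fixed $(\bar u,m)\in V_n^L$ the vertex eventually lies in $V_H$, and the same monotone convergence argument applied to $E_{P_H}[F_H\,I_{(\bar u,m)}]$ and $E_{P_H}[F_H]$ shows that the covariance converges to $\mathrm{Cov}_{\mulfd}(\mathds{1}_{\{|\mathcal{C}(\bar{0},1)|\ge k\}},I_{(\bar u,m)})$. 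To swap the sum with $H\to\infty$ and the limit with $\mathrm d/\mathrm d\lambda$ simultaneously, I would invoke the elementary estimate $|\mathrm{Cov}(F,I_v)|\le p_\lambda(v)(1-p_\lambda(v))$ valid for any $0\le F\le 1$. Dividing by $p_\lambda(\bar u,m)$ this produces the $H$- and $\lambda$-uniform envelope $2^{-nd}\Pi(m,n)$ on each summand, with total mass $|U_n^L|\cdot 2^{-nd}$ over $V_n^L$. Dominated convergence on the discrete sum then yields pointwise convergence of the derivatives to the claimed right-hand side, and a routine fundamental-theorem-of-calculus argument --- writing $P_H(|\mathcal{C}_H(\bar{0},1)|\ge k)$ as an integral of its derivative starting from some $\lambda_0>0$ and passing to the limit via the uniform bound on both sides --- upgrades this to the derivative identity~\eqref{eq:derivative_cluster_tail_discrete} on the infinite graph.

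The only nonroutine point is precisely this interchange of $\lim_{H}$ with $\mathrm d/\mathrm d\lambda$: the factors $1/p_\lambda(\bar u,m)$ in the derivative formula blow up as $m\to\infty$, so a naive summand bound is useless. The crucial cancellation is the covariance estimate $|\mathrm{Cov}(F,I_v)|\le p_\lambda(v)(1-p_\lambda(v))$, which absorbs exactly this singularity and yields an $H$-independent summable envelope. Once this envelope is in hand, dominated convergence and the fundamental theorem of calculus do the rest.
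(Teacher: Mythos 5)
Your proof is correct and follows essentially the same route as the paper: truncate the weight space to obtain the finite graph $G_H$, apply Lemma~\ref{fin_graph_derivative}, and pass to the limit $H\to\infty$ via a uniform summable envelope coming from a covariance bound. The paper phrases the truncation through the conditioning event $A_H=\{\text{all vertices of weight }>H\text{ closed}\}$ and uses the elementary bound $\abs{\mathrm{Cov}_{\mulfd}(F_H,I_{(\bar u,m)}\mid A_H)}\le p_\lambda(\bar u,m)$, while you work directly with the marginal measure $P_H$ (the same thing, since $\mulfd$ is a product measure and $\mathcal{C}_H$ is $V_H\cup E_H$-measurable) and the sharper bound $\abs{\mathrm{Cov}(F,I_v)}\le p_\lambda(v)(1-p_\lambda(v))$; both produce the same $H$- and $\lambda$-uniform envelope $2^{-nd}\Pi(m,n)$. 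Your explicit fundamental-theorem-of-calculus step --- integrating the finite-$H$ derivative identity, passing the limit through the integral by dominated convergence, and then differentiating back via continuity of the limiting integrand --- actually makes rigorous what the paper leaves as an informal ``pass to the limit in the derivative by dominated convergence,'' so it is a useful tightening rather than a divergence from the paper's argument.
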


\begin{proof}
    Define 
    \[
      N_H:= \{(\bar{u},m) \in U_n^L \times M_n : m > H\},
      \qquad
      A_H:= \{\omega : \omega(\bar{u},m) = 0 \ \forall\, (\bar{u},m) \in N_H\}.
    \]
    Since the states of all vertices are independent under $\mulfd$ and
    \[
      \mulfd\big((\bar{u},m)\text{ is open}\big)
      = p_\lambda(\bar{u},m) = 1-e^{-\lambda 2^{-nd}\Pi(m,n)},
    \]
    we obtain
    \begin{align*}
        \mulfd(A_H)
        &= \prod_{(\bar{u},m) \in N_H}\big(1 - p_\lambda(\bar{u},m)\big) \\
        &= \prod_{(\bar{u},m) \in U_n^L \times M_n,\ m>H} e^{-\lambda 2^{-nd}\Pi(m,n)} \\
        &= \left(\prod_{m \in M_n,\ m>H} e^{-\lambda 2^{-nd}\Pi(m,n)}\right)^{\abs{U_n^L}} \\
        &= \exp\Big( -\lambda 2^{-nd}\abs{U_n^L}\sum_{m\in M_n,\ m>H} \Pi(m,n)\Big) \\
        &= \exp\big(-\lambda 2^{-nd}\abs{U_n^L}\,\pi[H,\infty)\big)
        \;\xrightarrow[H\to\infty]{}\; 1.
    \end{align*}

    We now work under the conditional measure $\mulfd(\,\cdot \mid A_H)$. On the event $A_H$, all vertices with $m>H$ are closed, so the cluster
    $\mathcal{C}(\bar{0},1)$ in the infinite graph coincides with the cluster
    $\mathcal{C}_H(\bar{0},1)$ in the finite graph $G_H$. By the product structure,
    the conditional law of the configuration on $V_H\cup E_H$ given $A_H$
    coincides with $P_H$. Hence,
    \[
        P_H\big(\abs*{\mathcal{C}_H(\bar{0},1)} \geq k\big)
        = \mulfd\big(\abs*{\mathcal{C}(\bar{0},1)} \geq k \,\big|\, A_H\big),
    \]
    and, for $(\bar{u},m)\in V_H$,
    \begin{align}\label{id: tail_AH}
          \mathrm{Cov}_{P_H}\big(\mathds{1}_{\{\abs*{\mathcal{C}_H(\bar{0},1)} \geq k\}}, I_{(\bar{u},m)}\big)
      = \mathrm{Cov}_{\mulfd}\big(\mathds{1}_{\{\abs*{\mathcal{C}(\bar{0},1)} \geq k\}}, I_{(\bar{u},m)} \,\big|\, A_H\big).
    \end{align}

    Applying Lemma~\ref{fin_graph_derivative} with
    $F(\omega) = \mathds{1}_{\{\abs*{\mathcal{C}_H(\bar{0},1)} \geq k\}}$ yields
    \begin{align}\label{id:L2.3_app}
        \frac{\dd}{\dd \lambda}
        P_H\big(\abs*{\mathcal{C}_H(\bar{0},1)} \geq k\big)
        &= \sum_{(\bar{u},m) \in U_n^L \times M_H}
        \frac{2^{-nd}\Pi(m,n)}{ p_\lambda(\bar{u},m)}\,
        \mathrm{Cov}_{P_H}\big(\mathds{1}_{\{\abs*{\mathcal{C}_H(\bar{0},1)} \geq k\}},
        I_{(\bar{u},m)}\big).
    \end{align}
    Plugging~\eqref{id: tail_AH} into~\eqref{id:L2.3_app}, we obtain
    \begin{align*}
        \frac{\dd}{\dd \lambda}
        \mulfd\big(\abs*{\mathcal{C}(\bar{0},1)} \geq k \,\big|\, A_H\big)
        &= \sum_{(\bar{u},m) \in U_n^L \times M_n}
        \frac{2^{-nd}\Pi(m,n)}{ p_\lambda(\bar{u},m)}\,
        \mathrm{Cov}_{\mulfd}\big(\mathds{1}_{\{\abs*{\mathcal{C}(\bar{0},1)} \geq k\}},
        I_{(\bar{u},m)} \,\big|\, A_H\big),
    \end{align*}
    where the terms with $m>H$ vanish by definition of $A_H$. As $H\to\infty$, we have $\mulfd(A_H)\to 1$, so
    \[
      \mulfd\big(\abs*{\mathcal{C}(\bar{0},1)} \geq k \,\big|\, A_H\big)
      \xrightarrow[H\to\infty]{}
      \mulfd\big(\abs*{\mathcal{C}(\bar{0},1)} \geq k\big),
    \]
    and, for each $(\bar{u},m)\in V_n^L$,
    \[
      \mathrm{Cov}_{\mulfd}\big(\mathds{1}_{\{\abs*{\mathcal{C}(\bar{0},1)} \geq k\}},
      I_{(\bar{u},m)} \,\big|\, A_H\big)
      \xrightarrow[H\to\infty]{}
      \mathrm{Cov}_{\mulfd}\big(\mathds{1}_{\{\abs*{\mathcal{C}(\bar{0},1)} \geq k\}},
      I_{(\bar{u},m)}\big).
    \]
Moreover, if $m\le H$, conditioning on $A_H$ leads to
    \[
    \mathbb{E}_{\mulfd}\bigl[I_{(\bar u,m)}\mid A_H\bigr]
    = p_\lambda(\bar u,m),
    \]
    on the other hand, if $m > H$, we have 
    \[
    \mathbb{E}_{\mulfd}\bigl[I_{(\bar u,m)}\mid A_H\bigr]
    = 0.
    \]
    Since $F_H$ and $I_{(\bar u,m)}$ are indicator functions, in both cases
    \[
    \bigl|\mathrm{Cov}_{\mulfd}(F_H, I_{(\bar u,m)}\mid A_H)\bigr|
    \le \mathbb{E}_{\mulfd}\bigl[I_{(\bar u,m)}\mid A_H\bigr]
    \le p_\lambda(\bar u,m),
    \]
    and therefore 
    \begin{align*}
        \sum_{(\bar{u},m) \in U_n^L \times M_n}
        \frac{2^{-nd}\Pi(m,n)}{ p_\lambda(\bar{u},m)}
        \Bigl| \mathrm{Cov}_{\mulfd}\big(\mathds{1}_{\{\abs*{\mathcal{C}(\bar{0},1)} \geq k\}},
        I_{(\bar{u},m)} \,\big|\, A_H\big)\Bigr|
        &\leq \sum_{(\bar{u},m) \in U_n^L \times M_H} 2^{-nd}\Pi(m,n) \\
        &\leq 2^{-nd}\sum_{\bar{u}\in U_n^L} 1 = L < \infty,
    \end{align*}
    which implies, by dominated convergence, that we can pass to the limit $H\to\infty$ in the derivative. This yields \eqref{eq:derivative_cluster_tail_discrete}.
\end{proof}

\begin{remark}\label{rem:russi_cov-pif}
    Notice that for all $(\bar{u},m)\in V^L_n$,
    \begin{align*}
        \mathrm{Cov}_{\mulfd}\big(\mathds{1}_{\{\abs*{\mathcal{C}(\bar{0},1)} \geq k\}},
        I_{(\bar{u},m)}\big)
        =&\ p_\lambda(\bar{u},m)\big(1-p_\lambda(\bar{u},m)\big)\times\\
        &\times\mulfd\Big((\bar{u},m) \text{ is pivotal for }
        \{\abs*{\mathcal{C}(\bar{0},1)} \geq k\}\Big),
    \end{align*}
    where we call a vertex $v$ \emph{pivotal} for an increasing event $A$ if $A$
    occurs when $v$ is open but not when it is closed, with all other states unchanged.
    This yields the alternative formula
    \begin{align*}
        \frac{\dd}{\dd \lambda}\,
        \mulfd\big(\abs*{\mathcal{C}(\bar{0},1)} \geq k\big)
        = &\sum_{(\bar{u},m) \in V_n^L}
          \frac{\dd}{\dd \lambda}P^{L,n}_\lambda\Big((\bar{u},m)\textit{ is open}\Big)\times\\
          &\times\mulfd\Big((\bar{u},m) \text{ is pivotal for }
          \{\abs*{\mathcal{C}(\bar{0},1)} \geq k\}\Big).
    \end{align*}
\end{remark}

\subsection{OSSS inequality}

As previously stated, in order to derive the differential inequality present in the next section, we make use of the OSSS inequality. More precisely, we use its generalization to covariances from~\cite{DRT19b} extended to decision forests in~\cite{Hut20}. Before we state it, we need to make some auxiliary definitions. In the following, $S$ denotes an arbitrary countable set.

\begin{definition}
    A probability measure $\mu$ on $\{0,1\}^S$ is said to be \textit{monotonic} if 
    \begin{align*}
        \mu(\omega(s) = 1 \mid \omega\vert_{F} = \xi) \geq \mu(\omega(s) = 1 \mid \omega\vert_{F} = \zeta)
    \end{align*}
    whenever $F \subset S$, $s \in S$ and $\xi, \zeta \in \{0,1\}^F$ are such that $\xi \geq \zeta$ (coordinatewise). Here $\omega\vert_F$ denotes the restriction of $\omega \in \{0,1\}^S$ to $F$.
\end{definition}

Notice that any product measure on $\{0,1\}^S$ is monotonic. Furthermore, if $\mu_1$ is a monotonic measure on $\{0,1\}^{S_1}$ and $\mu_2$ is a monotonic measure on $\{0,1\}^{S_2}$, then the product $\mu_1 \otimes \mu_2$ is monotonic on $\{0,1\}^{S_1\sqcup S_2}$. Here $S_1 \sqcup S_2$ denotes the disjoint union of the sets $S_1$ and $S_2$.

\begin{definition}
    A \textit{decision tree} is a function $T: \{0,1\}^S \to S^\N$ satisfying the following property: $T_1(\omega) = s_1$ for some fixed $s_1 \in S$ and for every $r \geq 2$ there exists a function $f_r: \left(S \times \{0,1\}\right)^{r-1} \to S$ such that
\begin{align*}
    T_r(\omega) = f_r[(T_i,\omega(T_i))_{i = 1}^{r-1}].
\end{align*}
\end{definition}
This means that $T$ is a deterministic procedure that first queries the value of $\omega(s_1)$ and at each subsequent step it chooses which element of $S$ to investigate next based on the information it has gathered so far.

Let $\mu$ be a probability measure on $\{0,1\}^S$ and let $\omega$ be a random variable with law $\mu$. Given a decision tree $T$ and $r \geq 1$, let $\mathcal{F}_r(T)$ be the $\sigma$-algebra generated by the random variables $\left\{T_j(\omega)\right.$ : $1 \leq j \leq r\}$ and let $\mathcal{F}(T)=\bigcup_r \mathcal{F}_r(T)$. For a measurable function $f:\{0,1\}^S \rightarrow[-1,1]$, we say that $T$ computes $f$ if $f(\omega)$ is measurable with respect to the $\mu$-completion of $\mathcal{F}(T)$. 

Then, for any $s \in S$ we define the \textit{revealment probability} 
\begin{align}
    \delta_s(T) := \mu(\exists r \geq 1 \text{ such that } T_r(\omega) = s).
\end{align}

Following Hutchcroft~\cite[Eq.~(2.3)]{Hut20}, which in turn adopts the
notation introduced in~\cite{OSSS05}, we define,
for measurable functions $f,g : \{0,1\}^{S} \to \mathbb{R}$,
\begin{align}\label{def:Covr}
    \mathrm{Covr}_{\mu}[f,g]
    :=
    \mu\otimes\mu\bigl(|f(\omega_{1}) - g(\omega_{2})|\bigr)
    -
    \mu\bigl(|f(\omega_{1}) - g(\omega_{1})|\bigr),
\end{align}
where $\omega_{1},\omega_{2}$ are independent $\mu$-distributed samples.
If $f$ and $g$ are $\{0,1\}$-valued then
\begin{align}\label{def:Covr-Boolean}
    \mathrm{Covr}_{\mu}[f,g]
    =
    2\mathrm{Cov}_{\mu}[f,g].
\end{align}

Now, we extend this setup to the case of forests instead of trees, in the same way as in~\cite{Hut20}. 

\begin{definition}
 A \textit{decision forest} is a set of decision trees $F = \{T^i : i \in I\}$ for a countable set $I$. Given a decision forest $F$, let $\mathcal{F}(F)$ the smallest $\sigma$-algebra containing all of the $\sigma$-algebras $\mathcal{F}(T^i)$. Given a measure $\mu$ on $\{0,1\}^S$, a function $f : \{0,1\}^S \to \R$ and a decision forest $F$, we say that $F$ computes $f$ if $f$ is measurable with respect to the $\mu$-completion of the $\sigma$-algebra $\mathcal{F}(F)$. We also define the revealment probability $\delta_s(F,\mu)$ to be the probability under $\mu$ that there exists $i \in I$ and $r \geq 1$ such that $T^i_r(\omega) = s$.   
\end{definition} We are finally ready to state the generalization of the OSSS inequality, named after O’Donnell–Saks–Schramm–Servedio, Corollary 2.4 in \cite{Hut20}.

\begin{proposition}[OSSS inequality]
    \label{prop:osss_inequality}
    Let $S$ be a finite or countably infinite set and let $\mu$ be a monotonic measure on $\{0,1\}^S$. Then for every pair of measurable, $\mu$-integrable functions $f, g : \{0,1\}^S \to \R$ with $f$ increasing and every decision forest $F$ computing $g$ we have that 
    \begin{align}
    \label{eq:osss_ineq}
    \frac{1}{2}\abs*{\emph{Covr}_{\mu}[f,g]} \leq \sum_{s \in S}\delta_s(F,\mu)\emph{Cov}_{\mu}[f,\omega(s)]. 
    \end{align}
\end{proposition}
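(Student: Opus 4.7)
The plan is to follow the classical OSSS scheme as generalized by Hutchcroft, via a telescoping coupling argument. First I would reduce to a finite setting by truncating the index set $S$ and stopping each tree in the forest at some finite depth, so that all quantities are finite sums; the general case then follows from monotone convergence together with the $\mu$-integrability of $f$ and $g$ and the continuity of $\mathrm{Covr}_\mu$ under this approximation.

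Next, I construct the key coupling. Let $\omega_1, \omega_2$ be two independent samples with law $\mu$. Enumerate the coordinates queried by the trees in $F = \{T^i : i \in I\}$ when run on $\omega_1$ as $s_1, s_2, \ldots$ in some canonical order. Define hybrid configurations $\omega^{(0)} := \omega_1$ and, inductively, $\omega^{(r)}$ obtained from $\omega^{(r-1)}$ by replacing the value at coordinate $s_r$ with the corresponding value from $\omega_2$. Since $F$ computes $g$, one has $g(\omega^{(\infty)}) = g(\omega_2)$ almost surely, because all coordinates that $\mathcal{F}(F)$ resolves are eventually synchronized with $\omega_2$.

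Third, I would telescope. Writing
\[
|f(\omega_1) - g(\omega_2)| - |f(\omega_1) - g(\omega_1)| = \sum_{r \geq 1} \Bigl(|f(\omega_1) - g(\omega^{(r)})| - |f(\omega_1) - g(\omega^{(r-1)})|\Bigr),
\]
one bounds each increment by $|g(\omega^{(r)}) - g(\omega^{(r-1)})|$ via the triangle inequality. Taking expectations and reorganizing by the coordinate touched at step $r$, the right-hand side becomes a sum of the form $\sum_{s \in S} \delta_s(F,\mu)\cdot \Delta_s$, where $\Delta_s$ captures the expected effect of resampling coordinate $s$ conditional on $s$ being queried by some tree of the forest.

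The main obstacle will be converting each $\Delta_s$ into $\mathrm{Cov}_\mu[f,\omega(s)]$ when $\mu$ is only monotonic and not a product measure. For product measures this step is a routine conditional-expectation identity; in the monotonic case one invokes the Strassen-type monotone coupling of the conditional laws $\mu(\,\cdot \mid \omega(s) = 1)$ and $\mu(\,\cdot \mid \omega(s) = 0)$, whose existence is precisely the defining feature of monotonicity, and combines it with the increasing property of $f$ to obtain $\Delta_s \leq 2\,\mathrm{Cov}_\mu[f,\omega(s)]$; the factor $2$ exactly accounts for the $\tfrac{1}{2}$ on the left-hand side of~\eqref{eq:osss_ineq}. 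The extension from a single tree to a decision forest is then automatic, since the revealment of $F$ at any $s$ is by definition the probability that at least one tree in $F$ eventually queries $s$, and the telescoping argument above does not distinguish which tree performed the query.
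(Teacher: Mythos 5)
The paper does not supply its own proof of this proposition: it is imported directly as Corollary~2.4 of Hutchcroft~\cite{Hut20}, so there is no in-paper argument to compare against. Your attempt to reprove it contains two genuine gaps.

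The first concerns the coupling. You take $\omega_1,\omega_2$ independent $\mu$-samples and build the hybrid $\omega^{(r)}$ by overwriting queried coordinates of $\omega_1$ with the corresponding values from $\omega_2$. For a product measure each $\omega^{(r)}$ is again $\mu$-distributed, but a general monotonic measure may have arbitrary dependence across coordinates, and mixing coordinates from two independent copies then produces a configuration whose law is not $\mu$ --- so all subsequent expectations are computed under the wrong measure. The Strassen-type monotone coupling you invoke at the very end is precisely the tool that the Duminil-Copin--Raoufi--Tassion/Hutchcroft argument uses to repair this step: the hybrid sequence is constructed by resampling each newly queried coordinate from the conditional law of $\mu$ given the already revealed values, coupled coordinatewise-monotonically. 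It is not a cosmetic afterthought for converting $\Delta_s$ into a covariance.

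The second gap is more structural: your telescoping moves $g$ through the hybrid sequence rather than $f$. You write
\[
|f(\omega_1)-g(\omega_2)|-|f(\omega_1)-g(\omega_1)|=\sum_{r\ge1}\bigl(|f(\omega_1)-g(\omega^{(r)})|-|f(\omega_1)-g(\omega^{(r-1)})|\bigr)
\]
and bound each increment by $|g(\omega^{(r)})-g(\omega^{(r-1)})|$. That yields a sum of expected single-coordinate variations of $g$ along the exploration, whereas the target bound $\sum_{s\in S}\delta_s(F,\mu)\,\mathrm{Cov}_\mu[f,\omega(s)]$ in~\eqref{eq:osss_ineq} is tied to $f$. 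The correct telescoping moves the $f$-argument through the hybrids, producing increments $|f(\omega^{(r-1)})-f(\omega^{(r)})|$; it is there that the assumed monotonicity of $f$ allows these influence-type terms to be converted into covariances. As set up, your argument cannot reach the stated right-hand side. A further smaller issue: you run the forest on $\omega_1$ but want $g(\omega^{(\infty)})=g(\omega_2)$; for that identity you must run the forest on the sample whose values determine the hybrid on the queried coordinates (namely $\omega_2$), so that the exploration path on $\omega^{(\infty)}$ reproduces the path on $\omega_2$.
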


We use this inequality in a particular way: in our case the measure $\mu$ will
always be a product measure, and it is therefore convenient to work with
influences rather than covariances.  
Given a product measure $\mu$, a $\{0,1\}$-valued function $f$, and $s\in S$, we
define the influence of the $s$-th coordinate by
\begin{align}
    \textsc{Inf}_s^\mu(f)
    = \mu(f(\omega)\neq f(\tilde\omega)),
\end{align}
where $\tilde\omega$ is obtained from $\omega$ by resampling the $s$-th
coordinate independently according to the marginal $\mu_s$.

\begin{corollary}\label{our_OSSS}
    Let $S$ be a finite or countably infinite set and let $\mu$ be a product
    measure on $\{0,1\}^S$.  
    Then for every pair of measurable, $\mu$-integrable functions
    $f,g:\{0,1\}^S\to\{0,1\}$ with $f$ increasing and every decision forest $F$
    computing $g$ we have 
    \begin{align}\label{est:ourOSSS}
        \abs{\mathrm{Cov}_{\mu}[f,g]}
        \le \frac12 \sum_{s\in S}\delta_s(F,\mu)\,\textsc{Inf}_s^\mu(f).
    \end{align}
\end{corollary}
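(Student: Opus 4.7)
The proof should be a short deduction from Proposition~\ref{prop:osss_inequality}, reducing the two quantities on the left and right-hand sides to their Boolean/product-measure counterparts. I would first invoke the general OSSS inequality~\eqref{eq:osss_ineq} for the pair $(f,g)$ and the decision forest $F$, which yields
\[
\tfrac12\bigl|\mathrm{Covr}_\mu[f,g]\bigr|\;\le\;\sum_{s\in S}\delta_s(F,\mu)\,\mathrm{Cov}_\mu[f,\omega(s)].
\]
The hypothesis that $f,g\in\{0,1\}$ immediately lets me rewrite the left-hand side via identity~\eqref{def:Covr-Boolean}, obtaining $|\mathrm{Cov}_\mu[f,g]|$. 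So the entire task reduces to identifying $\mathrm{Cov}_\mu[f,\omega(s)]$ with $\tfrac12\,\textsc{Inf}_s^\mu(f)$ for every coordinate $s\in S$.

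The identity in question is a standard consequence of the product structure together with the fact that $f$ is monotone and Boolean. Concretely, I would fix $s\in S$, write $p_s:=\mu(\omega(s)=1)$, and condition on $\omega\vert_{S\setminus\{s\}}$. Given those coordinates, $f$ is determined by the two values $f_0,f_1\in\{0,1\}$ obtained by setting $\omega(s)=0$ and $\omega(s)=1$, respectively, and monotonicity yields $f_0\le f_1$. A direct computation then gives
\[
\mathrm{Cov}_\mu[f,\omega(s)]\;=\;p_s(1-p_s)\,\mathbb{E}_\mu\!\bigl[f_1-f_0\bigr]\;=\;p_s(1-p_s)\,\mu\bigl(s\text{ is pivotal for }f\bigr),
\]
where the pivotal event $\{f_0<f_1\}$ depends only on $\omega\vert_{S\setminus\{s\}}$. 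On the other hand, letting $\tilde\omega$ be obtained from $\omega$ by an independent resampling of $\omega(s)$, the event $\{f(\omega)\neq f(\tilde\omega)\}$ is precisely the intersection of the pivotal event with $\{\omega(s)\neq\tilde\omega(s)\}$; by independence of these two events under the product measure this gives
\[
\textsc{Inf}_s^\mu(f)\;=\;\mu(s\text{ pivotal for }f)\cdot 2\,p_s(1-p_s),
\]
and comparing the two displayed lines yields $\mathrm{Cov}_\mu[f,\omega(s)]=\tfrac12\,\textsc{Inf}_s^\mu(f)$.

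Substituting this identity into the OSSS inequality above produces the desired bound~\eqref{est:ourOSSS}. There is no real obstacle in the argument; the only subtlety is keeping straight which randomness is being averaged where, in particular the independence between the pivotal event (measurable with respect to $\omega\vert_{S\setminus\{s\}}$) and the resampling event at coordinate $s$, which is where the product structure of $\mu$ is genuinely used.
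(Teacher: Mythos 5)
Your proof is correct and follows essentially the same route as the paper: apply Proposition~\ref{prop:osss_inequality}, convert $\mathrm{Covr}$ to $\mathrm{Cov}$ via \eqref{def:Covr-Boolean}, and identify $\mathrm{Cov}_\mu[f,\omega(s)]$ with $\tfrac12\textsc{Inf}_s^\mu(f)$. The only difference is that you spell out the derivation of the pivotal identity, which the paper simply quotes as standard (with a pointer to Remark~\ref{rem:russi_cov-pif}); your computation of that identity is correct.
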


\begin{proof}
Since $f$ and $g$ are $\{0,1\}$–valued, identity~\eqref{def:Covr-Boolean}
gives
\[
\frac{1}{2}\mathrm{Covr}_\mu[f,g]=\mathrm{Cov}_\mu[f,g].
\]
Next, because $\mu$ is a product measure and $f$ is increasing in $s\in S$, we can use
the standard pivotal identity
\[
\mathrm{Cov}_\mu(f,\omega(s))
    = \frac 12\textsc{Inf}_s^\mu(f),
\]
compare also with the first statement in Remark~\ref{rem:russi_cov-pif}.
Inserting these into the OSSS inequality~\eqref{eq:osss_ineq} from the preceding proposition,
\[
\tfrac12\,\abs{\mathrm{Covr}_\mu[f,g]}
    \le \sum_{s\in S}
       \delta_s(F,\mu)\,\mathrm{Cov}_\mu(f,\omega(s)),
\]
yields~\eqref{est:ourOSSS}.
\end{proof}

\subsection{Cluster exploration scheme}

In this subsection, we construct an augmented variant of the graph \(G_n^L\). The purpose of this augmentation is to enable a coupling between a magnetization-based argument and the OSSS inequality. To that end, we introduce a specific cluster–exploration procedure (formulated as a tailored decision forest) together with the associated Boolean functions. These will serve as the objects to which we subsequently apply the OSSS inequality.

\paragraph{Algorithm setup.}
Fix $L,n,k\in\N$ and let $\tilde\gamma>0$ be an intensity parameter.  
We introduce an auxiliary Bernoulli field, via a set of ``copy vertices'', to decide which original vertices of $G_n^L$ are declared ``green''.  
More precisely, we add a copy vertex $\tilde v$ for each $v\in V_n^L$.  
Each copy vertex $\tilde v$ is open with probability
\(
h := 1 - \exp\{-\tilde\gamma\},
\)
independently of everything else.  
We then declare the original vertex $v$ to be green whenever its copy $\tilde v$ is open.  
Finally, a vertex $u\in G_n^L$ is said to be in the ``green component’’ whenever it is connected in $G_n^L$ to at least one green vertex (i.e., one whose copy is open).

More formally, let $\omega \in \{0,1\}^{V_n^L \cup E_n^L}$ be distributed according to $P=P_\lambda^{L,n}$.  
Let $\tilde{V}_n^L$ be a disjoint copy of $V_n^L$, and for each $v\in V_n^L$ denote by
$\tilde v$ its copy.  
Let $\rho\in\{0,1\}^{\tilde{V}_n^L}$ be independent of $\omega$ and distributed according 
to the Bernoulli product measure
\[
Q = Q_{\tilde\gamma}^{L,n},
\]
so that $\rho(\tilde v)=1$ with probability $h$ independently for all
$v\in V_n^L$.  
Set
\[
\mu = \mu^{L,n}_{\lambda,\tilde\gamma} := P\otimes Q,
\]
which is a product measure on $\{0,1\}^{V_n^L \cup \tilde{V}_n^L \cup E_n^L}$.

Next, fix a vertex $v \in V_n^L$ and define the increasing functions
$f,g:\{0,1\}^{V_n^L\cup E_n^L \cup \tilde{V}_n^L}\to\{0,1\}$ by
\begin{align}\label{def: fg}
    f(\omega,\rho) = \mathds{1}(\abs{\mathcal{C}(v)} \ge k),
    \qquad
    g(\omega,\rho) = \mathds{1}\big(\rho(\tilde{u}) = 1 
        \text{ for some } u\in \mathcal{C}(v)\big).
\end{align}
For our application, the choice $v=(\Bar{0},1)$ will be the relevant one. The function $g$ is the one computed by the exploration scheme described below.

\paragraph{Informal description.} 
Before introducing the algorithm for the exploration scheme, we first convey the informal idea behind the procedure. Our goal is to explore the clusters of all green vertices in the complete graph simultaneously, according to the following rules. For each vertex in the graph, we initiate an exploration only if the vertex is green, and also open. In that case, its open cluster is successively revealed --- including both edges and vertices. An edge is revealed only if it is incident to a vertex that has already been revealed to be open, and not incident to any vertex that has already been revealed to be closed. Once an edge is revealed to be open, the scheme immediately reveals its second endpoint, provided it has not yet been revealed. The decision tree rooted at any open vertex terminates once the entire open cluster of that vertex has been determined.

The revealment probability of any edge is thus by design controlled by the probability that an endpoint of the edge is open. This is crucial, because it ensures that the total contribution of the edge influences vanishes in the continuum limit --- as established in Appendix~\ref{AIII}.

\paragraph{Formal description.}
Let us describe the procedure precisely now. A decision forest $F = \{T^{u} \mid u \in V_n^L\}$ which computes $g$ is defined as follows: 

For the vertex $v$, the decision tree $T^v$ just reveals the status of $\rho(\tilde{v})$ and then halts, i.e., $T_n^v(\omega,  \rho) = \tilde{v}$ for $n \in \N$. 

Next, fix any vertex $u \in V_n^L$, $u\neq v$, and fix an arbitrary total order on the set of edges $E_n^L$. Set $T_0^u(\omega,\rho) = \tilde{u}$. If $\rho(\tilde{u}) = 0$, set $T_r^u(\omega,\rho) = \tilde{u}$ for every $r \geq 1$, i.e., halt the exploration. Otherwise, if $\rho(\tilde{u}) = 1$, set $T_1^u(\omega,\rho) = u$. If $\omega(u) = 0$, set $T_r^u(\omega,\rho) = u$ for every $r \geq 2$, i.e., halt the exploration. 

Finally, if we found $\rho(\tilde{u}) = 1$ and $\omega(u) = 1$, define $T_r^u(\omega,\rho)$, for $r \geq 2$, the following way: At each step of the decision tree, we will have a set of revealed open vertices $A_r^u$, a set of revealed closed vertices $B_r^u$, a set of revealed open edges $O_r^u$ and a set of revealed closed edges $C_r^u$. Moreover, we will denote by $I_r^u$ the set of edges with at least one endpoint in $A^u_r$ that are contained in $O_r^u\cup C_r^u$ (i.e., that are revealed until step $r$). We initialize by setting $A_1^u = u$ and $B_1^u = O_1^u = C_1^u = \emptyset$. Assume that $r \geq 1$ and that we have computed $(A_s^u,B_s^u,O_s^u,C_s^u,T_s^u)$ for every $s \leq r$. We are now ready to formulate the algorithm:
    \begin{itemize}
       \item \underline{Case 1}: We have $T_r^u \in E_n^L$, $\omega(T_r^u) = 1$ and an endpoint of $T_r^u$ is not in $A_r^u\cup B_r^u$.
       \medskip
       
        We set $T_{r+1}^u$ to be the endpoint of $T_r^u$ that is not in $A_r^u\cup B_r^u$. If $\omega(T_{r+1}^u) = 1$, we set $A_{r+1}^u = A_r^u \cup \{T_{r+1}^u\}$, set $B_{r+1}^u = B_r^u$, set $O_{r+1}^u = O_r^u$ and set $C_{r+1}^u = C_r^u$. Otherwise, $\omega(T_{r+1}^u) = 0$ and we set $A_{r+1}^u = A_r^u$, set $B_{r+1}^u = B_r^u \cup \{T_{r+1}^u\}$, set $O_{r+1}^u = O_r^u$ and set $C_{r+1}^u = C_r^u$.  
        \item \underline{Case 2}: One of the following three scenarios occurs: 
(i) $T_r^u \in E_n^L$, $\omega(T_r^u)=1$, and both endpoints of $T_r^u$ lie in $A_r^u$; 
(ii) $T_r^u \in E_n^L$ and $\omega(T_r^u)=0$; or 
(iii) $T_r^u \in V_n^L$.
\medskip

        If $I_r^u$ is empty, halt the exploration. Otherwise, set $T_{r+1}^u$ to be the edge minimal in $I_r^u$. If $\omega(T_{r+1}^u) = 1$, we set $A_{r+1}^u = A_r^u$, set $B_{r+1}^u = B_r^u$, set $O_{r+1}^u = O_r^u \cup \{T_{r+1}^u\}$ and set $C_{r+1}^u = C_r^u$. Otherwise, $\omega(T_{r+1}^u) = 0$ and we set $A_{r+1}^u = A_r^u$, set $B_{r+1}^u = B_r^u$, set $O_{r+1}^u = O_r^u$ and set $C_{r+1}^u = C_r^u \cup \{T_{r+1}^u\}$.
    \end{itemize}

    The above procedure defines a decision tree $T^u$ such that
    \begin{align}
        \{x \in V_n^L \cup E_n^L \cup \tilde{V}_n^L\mid T_n^u(\omega,\rho) = x \text{ for some } n \in \N\} \nonumber\\ = 
        \begin{cases}
        \{\tilde{u}\}, &\rho(\tilde{u}) = 0,\\
        \{\tilde{u}\} \cup \{u\}, &\rho(\tilde{u}) = 1 \cap \omega(u) = 0, \\
        \{u\}\cup \mathcal C_u(\omega) \cup E(\mathcal C_u(\omega)), &\rho(u) = 1 \cap \omega(u) = 1.
        \end{cases}
    \end{align}

We now proceed to make use of the OSSS inequality, Corollary \ref{our_OSSS}, applying it to the decision forest $F$ defined above and the associated Boolean functions $f,g$. To do so, we consider the set $S = V_n^L \cup \tilde{V}_n^L \cup E_n^L $ and the measure
\begin{align}
\mu=P\otimes Q=P_\lambda^{L,n}\otimes Q_{\tilde\gamma}^{L,n} ,   
\end{align}
which is a product measure on $\{0,1\}^S$.

\begin{fact}[Identity for revealment probabilities of vertices]
Let $\delta_{\tilde\gamma}(\Bar{u},m)=\delta^{L,n}_{\tilde\gamma}(\Bar{u},m)$ denote the revealment probability of the vertex $(\Bar{u},m)$ with respect to $\mu$ in the above exploration scheme. We can identify this probability by noticing that a vertex $(\Bar{u},m)$ is explored if and only if it is connected to an open green vertex by a path of open edges and vertices. To be more precise: Let $\lambda, \tilde\gamma>0$, $L,n\in\N$ and $(\Bar{u},m)\in V_n^L$, then 
\begin{align*}
\nonumber\delta_{\tilde\gamma}(\Bar{u},m)=& \mu\Big((\Bar{u},m)\in \mathcal{C}(v)\textit{ for an open green vertex v}\Big)
\nonumber\\=& E\left[1-\e^{-{\tilde\gamma}\vert \mathcal C(\Bar{u},m)\vert} \right].
\end{align*}

\end{fact}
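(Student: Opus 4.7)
The plan is to match the event that $(\Bar{u},m)$ is revealed by the decision forest $F = \{T^{v'} : v' \in V_n^L\}$ with the cluster-based event $\{\exists\,u \in \mathcal{C}(\Bar{u},m) : \rho(\tilde u) = 1\}$, after which the identity follows from the product structure $\mu = P \otimes Q$ and the independence of $\rho$ from $\omega$.

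First, I would trace the exploration scheme to see which trees can query the vertex $(\Bar{u},m) \in V_n^L$. Two mechanisms are possible: either the tree $T^{(\Bar{u},m)}$ itself queries $(\Bar{u},m)$, which happens (provided $(\Bar{u},m) \neq v$) iff $\rho(\widetilde{(\Bar{u},m)}) = 1$, in which case after revealing the copy vertex the algorithm immediately queries $(\Bar{u},m)$ regardless of its openness; or some other tree $T^{v'}$ with $v' \neq v,(\Bar{u},m)$ queries $(\Bar{u},m)$, which requires $v'$ to be both open and green and $(\Bar{u},m)$ to be reached during the cluster exploration rooted at $v'$, either as an element of $\mathcal{C}(v')$ or as a closed boundary vertex revealed via Case~1 through an incident open edge from $\mathcal{C}(v')$.

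Next, I would verify the equivalence with $\{\exists\,u \in \mathcal{C}(\Bar{u},m) : \rho(\tilde u) = 1\}$. The forward direction is direct: if $(\Bar{u},m)$ is itself green it trivially lies in $\mathcal{C}(\Bar{u},m)$; if $(\Bar{u},m)$ is reached from an open green $v'$, then concatenating the open path from $v'$ through $\mathcal{C}(v')$ with the open edge incident to $(\Bar{u},m)$ produces an open path from $(\Bar{u},m)$ to $v'$ through open intermediate vertices, whence $v' \in \mathcal{C}(\Bar{u},m)$ by symmetry of open-path connectivity among open vertices. For the converse, a green $u \in \mathcal{C}(\Bar{u},m)$ is either $(\Bar{u},m)$ itself (directly queried by $T^{(\Bar{u},m)}$) or is open and open-path-connected to $(\Bar{u},m)$, in which case $T^u$ initiates a cluster exploration reaching $(\Bar{u},m)$ either as a cluster element (if $(\Bar{u},m)$ is open) or as a closed boundary vertex via Case~1 (if $(\Bar{u},m)$ is closed and adjacent via an open edge to $\mathcal{C}(u)$).

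Finally, since $\mathcal{C}(\Bar{u},m)$ is a function of $\omega$ alone while the green marks $\rho$ form an independent Bernoulli$(h)$ field with $h = 1 - \e^{-\tilde\gamma}$, conditioning on $\omega$ gives that the cluster-based event has conditional probability $1 - (1-h)^{|\mathcal{C}(\Bar{u},m)|} = 1 - \e^{-\tilde\gamma|\mathcal{C}(\Bar{u},m)|}$, and integrating over $\omega$ yields $E[1 - \e^{-\tilde\gamma|\mathcal{C}(\Bar{u},m)|}]$, as claimed. The main obstacle lies in the second step: the case analysis must carefully track the algorithm's revelation of closed boundary vertices through Case~1 and exploit symmetry of open-path connectivity among open vertices to rewrite the revealment event purely in terms of the cluster $\mathcal{C}(\Bar{u},m)$. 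The slightly asymmetric role of the fixed vertex $v$ (whose tree $T^v$ only reveals the copy $\tilde v$) affects only the special case $(\Bar{u},m) = v$ and is absorbed in the upper-bound usage of revealment probabilities in the subsequent OSSS estimate.
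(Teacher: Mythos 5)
Your argument matches the paper's (one-sentence) justification: identify the revealment event of $(\Bar{u},m)$ with a cluster-connectivity event depending on $\omega$ alone, then use independence of the ghost field $\rho$ to compute the exponential. The forward inclusion and the final Bernoulli computation are both correct. The one inaccuracy is in your description of the role of the distinguished vertex $v$: you say the asymmetry of $T^{v}$ --- which queries only $\tilde v$ and never initiates a cluster exploration --- "affects only the special case $(\Bar{u},m)=v$." In fact it also breaks the converse inclusion whenever $v$ is open and happens to be the \emph{only} green vertex of $\mathcal{C}(\Bar{u},m)$: in that configuration the event $\{\exists\,u\in\mathcal{C}(\Bar{u},m):\rho(\tilde u)=1\}$ holds (with $u=v$), yet no tree in the forest ever queries $(\Bar{u},m)$, since $T^{v}$ halts after $\tilde v$ and $T^{(\Bar{u},m)}$ only fires when $(\Bar{u},m)$ itself is green. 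So the claimed identity holds only as the one-sided bound $\delta_{\tilde\gamma}(\Bar{u},m)\le E\bigl[1-\e^{-\tilde\gamma\lvert\mathcal{C}(\Bar{u},m)\rvert}\bigr]$. This imprecision is already present in the paper's statement of the Fact (whose middle expression, read literally, would in addition force $(\Bar{u},m)$ to be open; your rephrasing in terms of green vertices of $\mathcal{C}(\Bar{u},m)$ is the one that matches the exponential formula), and it is harmless where the Fact is actually used, since the OSSS estimate only needs the upper bound on $\delta_{\tilde\gamma}$. Still, a careful statement should replace the equality by $\le$ rather than attributing the slack purely to the case $(\Bar{u},m)=v$.
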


\begin{remark}[Connection to magnetization]
The above identity expresses the revealment probability as the magnetization at $(\Bar{u},m)$ under a reparametrized external field. In the exploration scheme, a vertex is \emph{revealed} precisely when the algorithm uncovers its state, which occurs if it is connected to a green (ghost) vertex. This mirrors the standard representation of magnetization in random–cluster models (cf.~Aizenman~\cite{Aiz82}). Hence, up to the change of variables in the field parameter, revealment probabilities coincide with magnetization values, providing the natural bridge between the algorithmic and physical formulations.
\end{remark}

Notice that Corollary \ref{our_OSSS} yields the following inequality for the choice of functions $f,g$ as in~\eqref{def: fg}:
\begin{align}
\text{Cov}_\mu[f,g]&\leq  \sum_{u\in V_n^L}\delta_{\tilde\gamma}(u) \textsc{Inf}^P_u(f)+\sum_{e\in E_n^L}\delta_{\tilde\gamma}(e) \textsc{Inf}^P_e(f)+\sum_{\tilde{u}\in \tilde{V}_n^L}\delta_{\tilde\gamma}(\tilde{u})\textsc{Inf}^Q_{\tilde{u}}(f) \nonumber
\\&=\sum_{u\in V_n^L}\delta_{\tilde\gamma}(u) \textsc{Inf}^P_u(f)+\sum_{e\in E_n^L}\delta_{\tilde\gamma}(e) \textsc{Inf}^P_e(f).
\label{cov_f_g_bound}
\end{align}
The sum over the copy vertices vanishes because the value of $f$ does not depend on the states of copy vertices by definition (and therefore $\textsc{Inf}^Q_{\tilde{u}}(f) = 0$ for any $\tilde{u}\in \tilde{V}_n^L$). Hence, we get the following proposition:

\begin{proposition}\label{prop:OSSSmain}
    Let $L,n \in \N$. We have
    \begin{align}\label{est:OSSSmain}
        \nonumber&\sum_{u \in V_n^L} \delta_{\frac\gamma k}(u)\textsc{Inf}^P_u(f) \\\geq& \left(1 - \e^{-\gamma} - E\left[1-\exp\left\{-\frac{\gamma\vert \mathcal{C}(v)\vert}{k}\right\}\right]\right)P(\abs*{\mathcal{C}(v)} \geq k) - \sum_{e\in E_n^L}\delta_{\frac \gamma k}(e) \textsc{Inf}^P_e(f)
    \end{align}
    for all $v  \in V_n^L$, $k\geq 1$ and $\gamma >0$.
\end{proposition}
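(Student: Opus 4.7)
The plan is to apply the OSSS-type bound from Corollary~\ref{our_OSSS} to the functions $f,g$ defined in~\eqref{def: fg}, the decision forest $F$ constructed in the preceding paragraphs, and the product measure $\mu=P\otimes Q$, and then to obtain a lower bound on $\mathrm{Cov}_\mu[f,g]$ by a direct computation exploiting the ``ghost field'' $\rho$. First I would verify the hypotheses of Corollary~\ref{our_OSSS}: the measure $\mu$ is a product measure on $\{0,1\}^S$ with $S=V_n^L\cup \widetilde V_n^L\cup E_n^L$, both $f$ and $g$ are increasing (opening more sites/edges only enlarges $\mathcal C(v)$, and opening more copies can only help $g$), and the forest $F$ computes $g$ — since the tree $T^u$ fully reveals the open cluster of $u$ whenever $u$ is green and open (and $T^v$ reveals $\rho(\tilde v)$), running all trees determines whether $v$ lies in a cluster containing at least one green vertex.

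Next, I would apply Corollary~\ref{our_OSSS} to obtain (up to the constant in front, as used in the preceding display~\eqref{cov_f_g_bound})
\begin{align*}
\mathrm{Cov}_\mu[f,g]\;\leq\;\sum_{u\in V_n^L}\delta_{\tilde\gamma}(u)\,\textsc{Inf}_u^P(f)\;+\;\sum_{e\in E_n^L}\delta_{\tilde\gamma}(e)\,\textsc{Inf}_e^P(f),
\end{align*}
where the third sum over copies $\tilde u\in \widetilde V_n^L$ vanishes because $f$ does not depend on $\rho$. Setting $\tilde\gamma=\gamma/k$ and rearranging, it suffices to prove the covariance lower bound
\begin{align*}
\mathrm{Cov}_\mu[f,g]\;\geq\;\Bigl(1-\mathrm e^{-\gamma}-E\!\left[1-\exp\!\left\{-\tfrac{\gamma |\mathcal C(v)|}{k}\right\}\right]\!\Bigr)\,P(|\mathcal C(v)|\geq k).
\end{align*}

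For this, I would compute $E_\mu[g]$ and $E_\mu[fg]$ by conditioning on $\mathcal C(v)$ (a function of $\omega$ alone) and using that $\rho$ is independent of $\omega$ under $\mu$. Since each copy $\tilde u$ is open with probability $h=1-\mathrm e^{-\tilde\gamma}$ independently, the conditional probability that at least one of the copies $\{\tilde u:u\in\mathcal C(v)\}$ is open equals $1-\mathrm e^{-\tilde\gamma|\mathcal C(v)|}$. Hence
\begin{align*}
E_\mu[g]=E\!\left[1-\mathrm e^{-\tilde\gamma|\mathcal C(v)|}\right]=E\!\left[1-\exp\!\left\{-\tfrac{\gamma|\mathcal C(v)|}{k}\right\}\right],
\end{align*}
and on the event $\{|\mathcal C(v)|\geq k\}$ the same conditional expectation is at least $1-\mathrm e^{-\tilde\gamma k}=1-\mathrm e^{-\gamma}$. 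Therefore
\begin{align*}
E_\mu[fg]\;\geq\;(1-\mathrm e^{-\gamma})\,P(|\mathcal C(v)|\geq k),
\end{align*}
and subtracting $E_\mu[f]E_\mu[g]=P(|\mathcal C(v)|\geq k)\cdot E_\mu[g]$ yields the desired lower bound on the covariance.

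No step here presents a genuine analytic obstacle; the argument is a conceptually clean application of OSSS combined with the ghost-field identity for $E_\mu[g]$. The only points requiring care are verifying that $F$ really computes $g$ (i.e.\ the exploration rooted at every potentially green vertex is sufficient to decide the event), and tracking the index $\tilde\gamma=\gamma/k$ through the calculation so that the threshold $1-\mathrm e^{-\gamma}$ appears after the cluster-size truncation at $k$. The rest is direct bookkeeping using that $\rho$ is independent of $\omega$ and that $\mathcal C(v)$ is $\omega$-measurable.
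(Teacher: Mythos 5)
Your proposal is correct and follows the same route as the paper's proof: apply the OSSS bound from Corollary~\ref{our_OSSS} (as recorded in~\eqref{cov_f_g_bound}) with $\tilde\gamma=\gamma/k$, then compute the covariance lower bound by conditioning on $\mathcal C(v)$ and using the independence of the ghost field $\rho$ from $\omega$, giving $E_\mu[fg]\ge(1-\mathrm e^{-\gamma})P(|\mathcal C(v)|\ge k)$ and $E_\mu[g]=E[1-\mathrm e^{-\gamma|\mathcal C(v)|/k}]$. The paper merely packages the three steps (identifying $E_\mu[fg]$, $E_\mu[g]$, and the truncation on $\{|\mathcal C(v)|\ge k\}$) into a single display, so the argument is identical in substance.
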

\begin{proof}
Let $\tilde\gamma=\frac{\gamma}{k}$ and denote by $\rho(A)$ the number of ``green" vertices in $A\subset V_n^L$. Then we can bound the left side of~\eqref{cov_f_g_bound} as follows
 \begin{align*}
\text{Cov}_\mu[f,g] =& \mu(\abs*{\mathcal{C}(v)} \geq k,\rho(\mathcal{C}(v)) \geq 1 ) - \mu(\rho(\mathcal{C}_{v}) \geq 1 )P(\abs*{\mathcal{C}(v)} \geq k) \nonumber \\
=& E\left[\left(1-\exp\left\{-{\gamma\vert \mathcal{C}(v)\vert}/{k}\right\}\right)\mathds1_{\{\abs*{\mathcal{C}(v)} \geq k\}}\right] - E\left(1-\exp\left\{-{\gamma\vert \mathcal{C}(v)\vert}/{k}\right\}\right)P(\abs*{\mathcal{C}(v)} \geq k) \nonumber \\
\geq& (1 - \e^{- \gamma})P(\abs*{\mathcal{C}(v)} \geq k) - E\left[1-\e^{-\gamma\vert \mathcal{C}(v)\vert/k}\right]P(\abs*{\mathcal{C}(v)} \geq k),
\end{align*}   
so that the claim follows directly from~\eqref{cov_f_g_bound}.
\end{proof}

\section{Derivation of differential inequalities.}\label{sect:3}

In this section, we use Proposition~\ref{prop:OSSSmain} in combination with the variant of Russo's formula from Subsection~\ref{subsetion:Russo} to derive differential equations for the tail function of the cluster size $\vert\mathcal C(\Bar{0},1) \vert$ --- here in the discrete finite-volume setup given by our approximation of the random connection model. We do not need to restrict ourselves to the min-reach RCM here, instead we consider a set of technical assumptions for the discrete finite-volume setup, in particular to have some uniform control over ratios of vertex influences (naturally, we will later verify that the min-reach RCM satisfy these technical assumptions, see Section~\ref{sect:Pf_mainass}). Taking the continuum limit $n\to\infty$ and the infinite-volume limit $L\to\infty$ for those differential equations will provide our main result in Section~\ref{sect:Inf_vol_lim} --- under suitable moment conditions for the weight distribution $\pi$.~\\

First of all,  we want to make some simple observations with regard to the revealment probabilities and influences appearing on the left side of~\eqref{est:OSSSmain} in Proposition~\ref{prop:OSSSmain} and introduce some additional notation in the process. We will continue to write $P=P^{L,n}_\lambda$ in the following.

\begin{notation}[Pivotality events and their probabilities]
 We let $A^k(\Bar{u},m)$ denote the event that $(\Bar{u},m)$ is closed and pivotal for the size of the open cluster of $(\Bar{0},1)$ being at least $k$; let $\mathscr P^k(\Bar{u},m)$ denote the probability of this event. To be more precise: Let $\lambda>0$, $L,n,k\in\N$ and $(\Bar{u},m)\in V_n^L$, then 
\begin{align*}
\mathscr P^k(\Bar{u},m)= P\Big(A^k(\Bar{u},m)\Big),
\end{align*} where
\begin{align*}
A^k(\Bar{u},m)= \{(\Bar{u},m)\text{ is closed and pivotal for the event }\vert \mathcal{C}(\Bar{0},1)\vert\geq k \}.
\end{align*}
Recall, that we call a vertex $v$ pivotal for an increasing event $A$ if $A$ occurs when $v$ is open but not when it is closed, with all other states unchanged.
\end{notation}
\begin{fact}[Identity for the influences of vertices]
Let $\lambda>0$, $L,n,k\in\N$ and $(\Bar{u},m)\in V_n^L$. Then, using the notation above, we have
\begin{align}\label{id:infl1}
\nonumber&\textsc{Inf}_{(\Bar{u},m)}^P\left(\mathds{1}_{\{\vert \mathcal{C}(\Bar{0},1)\vert\geq k\}}\right)
\\=& 2\Big(1-\e^{-\lambda 2^{-nd}\Pi(m,n)}\Big) \mathscr P^k(\Bar{u},m),
\end{align}
since for any vertex $v\in V_n^L$ the pivotality of $v$ is independent from its closedness and
\begin{align*}
\nonumber &\textsc{Inf}_{v}^P\left(\mathds{1}_{\{\vert \mathcal{C}(\Bar{0},1)\vert\geq k\}}\right)\\=&2P\left((v\text{ is open}\right)P\left((v\text{ is closed}\right)P\left((v\text{ is pivotal for }\vert \mathcal{C}(\Bar{0},1)\vert\geq k\right),
\end{align*}
compare also with the first statement in Remark~\ref{rem:russi_cov-pif}.
\end{fact}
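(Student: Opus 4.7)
The plan is to derive \eqref{id:infl1} directly from the definition of influence under the product measure $P$, using the standard pivotal decomposition for monotone Boolean functions. Writing $v=(\Bar u,m)$, $\tilde\omega$ for the configuration obtained from $\omega$ by independently resampling only the coordinate $v$, and $p := P(v\text{ is open}) = 1 - e^{-\lambda 2^{-nd}\Pi(m,n)}$, I would start from $\textsc{Inf}_v^P(f) = (P\otimes P)(f(\omega)\neq f(\tilde\omega))$. Since $f = \mathds{1}_{\{|\mathcal{C}(\Bar 0,1)|\geq k\}}$ is the indicator of an \emph{increasing} event, the discrepancy $f(\omega)\neq f(\tilde\omega)$ can occur only simultaneously with $\omega(v)\neq\tilde\omega(v)$ and with $v$ being \emph{pivotal}, the latter event being determined entirely by the configuration off $\{v\}$.

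The key observation is that, because $P$ is a product measure, the pivotality event $\mathrm{Piv}_k(v)$ is independent of both $\omega(v)$ and $\tilde\omega(v)$. Summing the two symmetric contributions from $(\omega(v),\tilde\omega(v))\in\{(0,1),(1,0)\}$ then yields
$$\textsc{Inf}_v^P(f) \;=\; 2\,p(1-p)\,P\bigl(\mathrm{Piv}_k(v)\bigr),$$
which is exactly the intermediate identity stated at the end of the Fact. Combined with the factorization $\mathscr P^k(v) = P(v\text{ closed})\,P(\mathrm{Piv}_k(v)) = (1-p)\,P(\mathrm{Piv}_k(v))$ (again by independence of closedness from pivotality), this gives $\textsc{Inf}_v^P(f) = 2p\,\mathscr P^k(v)$, matching \eqref{id:infl1}. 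As a sanity check, this equals $2\,\mathrm{Cov}_P(f,I_v)$, in agreement with \eqref{def:Covr-Boolean} and with Remark~\ref{rem:russi_cov-pif}.

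There is no substantive obstacle here; the only point worth spelling out is that pivotality of $v$ for the event $\{|\mathcal{C}(\Bar 0,1)|\geq k\}$ is a function of the configuration restricted to the complement of $\{v\}$, which follows immediately from the definition of pivotality and the fact that the cluster $\mathcal{C}(\Bar 0,1)$ is unambiguously determined once one specifies whether $v$ is open or closed, all else being fixed.
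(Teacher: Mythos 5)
Your proposal is correct and takes essentially the same route as the paper: you first derive the standard identity $\textsc{Inf}_v^P(f)=2p(1-p)\,P(\mathrm{Piv}_k(v))$ from the pivotal decomposition and the independence of the pivotality event from $\omega(v)$, and then factor $\mathscr P^k(v)=(1-p)\,P(\mathrm{Piv}_k(v))$ to conclude. The only difference is that the paper states the intermediate identity as known (pointing to Remark~\ref{rem:russi_cov-pif}), whereas you supply its short derivation; the minor notational looseness of writing $(P\otimes P)$ for the resampling measure does not affect the argument.
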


Before we can apply our variant of Russo's formula from Corollary~\ref{derivative_cluster_tail_discrete} to the expression on the left side of~\eqref{est:OSSSmain}, we still need some additional input. The following assumption is essential for the proof of Proposition~\ref{thm:4main}, the main result of this section:
\begin{assumption}\label{mainass}
\begin{enumerate} Both the influences and the revealment probabilities for vertices of different weights are comparable in our discrete setup --- in the following sense:
    \item[a)]\label{mainass_a} For every $\lambda>0$, there exists a measurable function $\mathfrak R_\lambda:[1,\infty)\to[1,\infty)$ such that for all $L, n\in\N$ sufficiently large, all  $(\Bar{u},m)\in V_n^L$ and all $\tilde\gamma>0$ sufficiently small, we have
 \begin{align}
 \frac{\delta^{L,n}_{\tilde\gamma}(\Bar{u},m)}{\delta^{L,n}_{\tilde\gamma}(\Bar{u},1)}\leq \mathfrak R_\lambda(m).
 \end{align}
 
  \item[b)]\label{mainass_b} For every $\lambda>0$, there exists a measurable function $\mathfrak r_\lambda:[1,\infty)\to [1,\infty)$ such that for all $k\in\N$, for sufficiently large $L,n\in\N$ and all $m\geq 1$, we have
    \begin{align}
  \frac{\sum_{\Bar{u}\in U_n^L}\mathscr P^k(\Bar{u},m)}{\sum_{\Bar{u}\in U_n^L}\mathscr P^k(\Bar{u},1)}\leq \mathfrak  r_\lambda(m).  
\end{align}
Moreover, we assume that the functions $\lambda\mapsto \mathfrak r_\lambda(m)$ and $\lambda\mapsto \mathfrak R_\lambda(m)$ are non-increasing for every $m\geq 1$.
  \end{enumerate}

\end{assumption}
\begin{remark}
    The assumption holds trivially when the connection function $\varphi$ does not depend on the weights~\eqref{ass:non-weighted}, with $\mathfrak r_\lambda=\mathfrak R_\lambda=1$. Moreover, in Section~\ref{sect:Pf_mainass}, we prove the assumption to hold for the min-reach RCM~\eqref{ass:eff-fin-range} with exponential upper bounds. However, we expect Assumption~\ref{mainass} to be satisfied in a very general class of weighted RCM with infinite range and unbounded weights, at least with exponential upper bounds. Although we will use the additional assumption that the functions $\mathfrak r_\lambda, \mathfrak R_\lambda$ are non-increasing in $\lambda$ in the proofs from Section 5, it can be considerably weakened without changing those proofs in any major way. 
\end{remark}

Next, let us introduce useful notation to adopt from now on throughout the paper.

\begin{notation}(The relation $\lesssim,\, \gtrsim$)
We will denote inequalities up to positive multiplicative constants by $\lesssim,\, \gtrsim$ in the obvious fashion. Unless otherwise specified, these constant will not depend on any parameter (apart from model assumptions like the dimension $d$), in particular the constants do not depend on the intensity $\lambda>0$ or the approximation parameters $L,n\in\N$.    
\end{notation}

Finally, we are ready to state the following proposition which is the main result of this section:
\begin{proposition}\label{thm:4main} Let $\lambda>0$ and suppose that the estimates from Assumption~\ref{mainass} hold for some functions $\mathfrak r_\lambda$ and $\mathfrak R_\lambda$. Then, for all $k\in\N$, for sufficiently large $L,n\in\N$ and sufficiently small $\tilde\gamma>0$, we have
\begin{align*}
&\sum_{(\Bar{u},m)\in V_n^L}\delta_{\tilde\gamma}(\Bar{u},m) \textsc{Inf}_{(\Bar{u},m)}^P\left(\mathds{1}_{\{\vert \mathcal C(\Bar{0},1)\vert\geq k\}}\right) \\\lesssim\ &\lambda\delta_{\tilde\gamma}(\Bar{0},1)\left(\sum_{m\in M_n} \Pi(m,n) \mathfrak r_\lambda(m)\mathfrak R_\lambda(m) \right) \frac{\dd}{\dd \lambda}\theta^{L,n}_\lambda(k).
\end{align*}

\end{proposition}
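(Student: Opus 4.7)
The plan is to chain the influence identity~\eqref{id:infl1}, the Russo-type derivative formula from Corollary~\ref{derivative_cluster_tail_discrete} (together with Remark~\ref{rem:russi_cov-pif}), the two quantitative controls supplied by Assumption~\ref{mainass}, and a single finite-box spatial monotonicity for the revealments, in order to reorganize the left-hand side (LHS) into the product form on the right. Throughout I write $X_m := \sum_{\Bar u\in U_n^L}\mathscr P^k(\Bar u,m)$, so that Remark~\ref{rem:russi_cov-pif} rewrites the relevant derivative as
\begin{align*}
\frac{\dd}{\dd\lambda}\theta^{L,n}_\lambda(k) \;=\; 2^{-nd}\sum_{m\in M_n}\Pi(m,n)\,X_m.
\end{align*}

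First, I expand $\textsc{Inf}_{(\Bar u,m)}^P = 2\,p_\lambda(\Bar u,m)\,\mathscr P^k(\Bar u,m)$ via~\eqref{id:infl1} and use the elementary bound $p_\lambda(\Bar u,m) = 1 - e^{-\lambda 2^{-nd}\Pi(m,n)} \le \lambda\,2^{-nd}\Pi(m,n)$. Assumption~\ref{mainass}(a) then gives $\delta_{\tilde\gamma}(\Bar u,m) \le \mathfrak R_\lambda(m)\,\delta_{\tilde\gamma}(\Bar u,1)$, and a standard monotone coupling in the finite box (via stochastic domination $|\mathcal C(\Bar 0,1)| \ge_{\mathrm{st}} |\mathcal C(\Bar u,1)|$, in the spirit of the remark following the discrete construction) provides $\delta_{\tilde\gamma}(\Bar u,1)\le \delta_{\tilde\gamma}(\Bar 0,1)$. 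Chaining these inequalities yields
\begin{align*}
\mathrm{LHS} \;\le\; 2\lambda\,\delta_{\tilde\gamma}(\Bar 0,1)\sum_{m\in M_n} 2^{-nd}\Pi(m,n)\,\mathfrak R_\lambda(m)\,X_m.
\end{align*}
Applying Assumption~\ref{mainass}(b), $X_m \le \mathfrak r_\lambda(m)X_1$, then factorizes the $m$-sum so that
\begin{align*}
\mathrm{LHS} \;\le\; 2\lambda\,\delta_{\tilde\gamma}(\Bar 0,1)\cdot 2^{-nd}X_1\cdot \sum_{m\in M_n}\Pi(m,n)\,\mathfrak r_\lambda(m)\mathfrak R_\lambda(m).
\end{align*}

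The main remaining obstacle is to close the loop: one needs the uniform bound $X_1\lesssim \sum_m\Pi(m,n)X_m$, i.e.\ that $\sum_{\Bar u}\mathscr P^k(\Bar u,1)$ is controlled by its $\Pi$-weighted average over $m\in M_n$, in order to identify $2^{-nd}X_1$ with the Russo derivative $\frac{\dd}{\dd\lambda}\theta^{L,n}_\lambda(k)$ up to a uniform constant. Since $\Pi(\cdot,n)$ is a probability measure on $M_n$, one sufficient route is the pointwise monotonicity $\mathscr P^k(\Bar u,m)\ge \mathscr P^k(\Bar u,1)$ for all $\Bar u$ and $m\ge 1$ --- which I expect to obtain from Assumption~(A.3) by monotone coupling of the edges emanating from a vertex (larger weight giving stochastically more potential neighbors), combined with the fact that $(1-p_\lambda(\Bar u,m))$ is non-decreasing in $m$ because $\Pi(\cdot,n)$ is non-increasing. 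Summing over $\Bar u$ then gives $X_m\ge X_1$ for all $m\ge 1$, whence $X_1\le \sum_m\Pi(m,n)X_m$, and the three display inequalities concatenate to the announced bound.
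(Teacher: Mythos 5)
Your proposal is correct and follows essentially the same route as the paper's own proof: both chain the influence identity~\eqref{id:infl1}, Assumption~\ref{mainass}(a) and~(b), the finite-box monotonicity $\delta_{\tilde\gamma}(\Bar u,1)\lesssim\delta_{\tilde\gamma}(\Bar 0,1)$, the bound $p_\lambda(\Bar u,m)\le\lambda 2^{-nd}\Pi(m,n)$, the pointwise monotonicity $\mathscr P^k(\Bar u,m)\ge\mathscr P^k(\Bar u,1)$ (from (A.3) plus $\Pi(\cdot,n)$ non-increasing), and finally the Russo identity via Remark~\ref{rem:russi_cov-pif}. The only cosmetic difference is that you introduce $X_m$ and apply the monotonicity at the level of the $\Bar u$-sums, whereas the paper inserts the factor $1=\sum_m\Pi(m,n)$ into $\mathscr P^k(\Bar u,1)$ and upgrades the weight pointwise before summing over $\Bar u$; the content is identical.
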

\begin{proof}
Fix $\lambda>0$ and $k\in \N$. First, we use Assumption~\ref{mainass_a}{a)} --- and the fact that the estimate $\delta_{\tilde\gamma}(\Bar{u},1)\lesssim \delta_{\tilde\gamma}(\Bar{0},1)$ holds for $L,n\in\N$ sufficiently large --- to obtain 
\begin{align*}
&\sum_{(\Bar{u},m)\in V_n^L}\delta_{\tilde\gamma}(\Bar{u},m) \textsc{Inf}_{(\Bar{u},m)}^P\left(\mathds{1}_{\vert \mathcal C(\Bar{0},1)\vert\geq k}\right)
\\\lesssim\ & \delta_{\tilde\gamma}(\Bar{0},1)\sum_{m\in M_n} \mathfrak R_\lambda(m) \sum_{\Bar{u}\in U_n^L} \textsc{Inf}_{(\Bar{u},m)}^P\left(\mathds{1}_{\vert \mathcal C(\Bar{0},1)\vert\geq k}\right)
\\=\ &   \delta_{\tilde\gamma}(\Bar{0},1)\sum_{m\in M_n} \mathfrak R_\lambda(m) \sum_{\Bar{u}\in U_n^L} 2\Big(1-\e^{-\lambda 2^{-nd}\Pi(m,n)}\Big) \mathscr P^k(\Bar{u},m),
\end{align*}
for sufficiently large $L,n\in\N$ and sufficiently small $\tilde\gamma>0$, where for the last equality we used identity~\eqref{id:infl1}.

Assumption~\ref{mainass}{b)} yields
\begin{align}
&\delta_{\tilde\gamma}(\Bar{0},1)\sum_{m\in M_n} \mathfrak R_\lambda(m) \Big(1-\e^{-\lambda 2^{-nd}\Pi(m,n)}\Big)\sum_{\Bar{u}\in U_n^L}  \mathscr P^k(\Bar{u},m)\nonumber
\\\lesssim\ & \delta_{\tilde\gamma}(\Bar{0},1)\sum_{m\in M_n} \mathfrak r_{\lambda}(m)\mathfrak R_\lambda(m)\Big(1-\e^{-\lambda 2^{-nd}\Pi(m,n)}\Big) \sum_{\Bar{u}\in U_n^L}\mathscr P^k(\Bar{u},1).\label{eq:est1}
\end{align}
For any $\Bar{u}\in U^L_n$, we can write, setting $A(k)=\{\vert \mathcal{C}(\Bar{0},1)\vert\geq k\}$,
\begin{align*}
\mathscr P^k(\Bar{u},1)=&\left(\sum_{m\in M_n}\Pi(m,n)\right)\mathscr P^k(\Bar{u},1)
\\=&\sum_{m\in M_n}\Pi(m,n)\e^{-\lambda 2^{-nd}\Pi(1,n)}P^{L,n}_\lambda\Big((\Bar{u},1)\textit{ is $A(k)$-pivotal}\Big).
\end{align*}
Observe that --- by Assumption~(A.3) --- the probability of a vertex $(\Bar{u},m)$ to be $A(k)$-pivotal is non-decreasing in the weight $m$ of the vertex. Therefore, for all $L,n\in\N$ and $m\in M_n$
$$
P^{L,n}_\lambda\Big((\Bar{u},1)\textit{ is $A(k)$-pivotal}\Big)\leq P^{L,n}_\lambda\Big((\Bar{u},m)\textit{ is $A(k)$-pivotal}\Big)
$$
and, moreover, the estimate $$e^{-\lambda2^{-nd}\Pi(1,n)}\leq \e^{-\lambda2^{-nd}\Pi(m,n)}$$ holds for all $n\in\N$ and $m\in M_n$ by our assumption on the weight distribution $\pi$.
Thus, we get
\begin{align*}
\mathscr P^k(\Bar{u},1)\leq \sum_{m\in M_n}\Pi(m,n)\e^{-\lambda 2^{-nd}\Pi(m,n)}P^{L,n}_\lambda\Big((\Bar{u},m)\textit{ is $A(k)$-pivotal}\Big
).
\end{align*}
Plugging that estimate into the upper bound from \eqref{eq:est1} yields
\begin{align*}
&\delta_{\tilde\gamma}(\Bar{0},1)\sum_{m\in M_n} \mathfrak r_\lambda(m)\mathfrak R_\lambda(m) \sum_{\Bar{u}\in U_n^L} \Big(1-\e^{-\lambda 2^{-nd}\Pi(m,n)}\Big) \mathscr P^k(\Bar{u},m)\nonumber
\\\lesssim
\ & \delta_{\tilde\gamma}(\Bar{0},1)\left(\sum_{m\in M_n} \mathfrak r_\lambda(m)\mathfrak R_\lambda(m)\frac{1-\e^{-\lambda 2^{-nd}\Pi(m,n)}}{2^{-nd}}\right)\sum_{\Bar{u}\in U_n^L} \sum_{m\in M_n} 2^{-nd}\Pi(m,n)\mathscr P^k(\Bar{u},m)
\end{align*}
and --- since $\frac{1-\e^{-\lambda 2^{-nd}\Pi(m,n)}}{2^{-nd}}\leq\frac{\lambda 2^{-nd}\Pi(m,n)}{2^{-nd}} =\lambda \Pi(m,n)$ --- we can conclude that altogether
\begin{align*}
&\sum_{(\Bar{u},m)\in V_n^L}\delta_{\tilde\gamma}(\Bar{0},1) \textsc{Inf}^P_{(\Bar{u},m)}\Big(\mathds{1}_{\{\vert \mathcal C(\Bar{0},1)\vert\geq k\}}\Big)\\\lesssim\  &\lambda \delta_{\tilde\gamma}(\Bar{u},m)\left(\sum_{m\in M_n} \mathfrak r_\lambda(m)\mathfrak R_\lambda(m)\Pi(m,n)\right)\sum_{\Bar{u}\in U_n^L} \sum_{m\in M_n} 2^{-nd}\Pi(m,n)\mathscr P^k(\Bar{u},m).
\end{align*} 
The claim of the proposition follows since by Russo's formula (Corollary~\ref{derivative_cluster_tail_discrete} and Remark~\ref{rem:russi_cov-pif})
\begin{align*}
\nonumber\sum_{\Bar{u}\in U_n^L} \sum_{m\in M_n}  2^{-nd}\Pi(m,n)\mathscr P^k(\Bar{u},m)
=\nonumber&\sum_{v\in V^L_n}\frac{\dd}{\dd \lambda}P^{L,n}_\lambda\Big(v\textit{ is open}\Big) P^{L,n}_\lambda\Big(v\textit{ is $A(k)$-pivotal}\Big)
\\=&\frac{\dd}{\dd \lambda}P^{L,n}_\lambda\Big(A(k)\Big)=\frac{\dd}{\dd \lambda}\theta^{L,n}_\lambda(k).
\end{align*}\end{proof}
Following Hutchcroft's strategy in~\cite{Hut20}, we combine the obtained result with~\eqref{est:OSSSmain} to immediately obtain the following inequalities:
\begin{corollary}\label{cor:3.1}
\label{cor:4main} Let $\lambda>0$ and suppose that the estimates from Assumption~\ref{mainass} hold for some functions $\mathfrak r_\lambda$ and $\mathfrak R_\lambda$. Then, we have
\begin{align*} 
&\left(\frac{1 - \e^{-\tilde\gamma k}}{\delta_{\tilde\gamma}(\Bar{0},1)} -1 \right)\theta_\lambda^{L,n}(k)-\frac{1}{\delta_{\tilde\gamma}(\Bar{0},1)}\sum_{e\in E_n^L}\delta_{\tilde\gamma}(e) \textsc{Inf}^P_e(\mathds{1}_{\{\vert \mathcal C(\Bar{0},1)\vert\geq k\}})
\\\lesssim&
\left(\lambda\sum_{m\in M_n} \Pi(m,n) \mathfrak r_\lambda(m)\mathfrak R_\lambda(m)\right) \frac{\dd}{\dd \lambda}\theta^{L,n}_\lambda(k)
\end{align*}
for all $k\geq 1$, for $L,n\in\N$ sufficiently large and for $\tilde\gamma>0$ sufficiently small.
\end{corollary}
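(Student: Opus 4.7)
The corollary is a direct combination of Proposition~\ref{prop:OSSSmain} (the OSSS lower bound on the vertex-influence sum) with Proposition~\ref{thm:4main} (the Russo-type upper bound on the same sum), after suitable rescaling. The plan is to line these two inequalities up with matching parameters and chain them.

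First I would specialize Proposition~\ref{prop:OSSSmain} to the reference vertex $v=(\bar 0,1)$ and reparametrize by setting $\gamma = \tilde\gamma\,k$, so that $1 - e^{-\gamma} = 1 - e^{-\tilde\gamma k}$ and $\gamma/k = \tilde\gamma$. Using the identity recorded in the fact on revealment probabilities,
\[
E\!\left[1 - e^{-\tilde\gamma|\mathcal C(\bar 0,1)|}\right] = \delta_{\tilde\gamma}(\bar 0,1),
\]
the OSSS estimate~\eqref{est:OSSSmain} becomes
\[
\sum_{u\in V_n^L}\delta_{\tilde\gamma}(u)\,\textsc{Inf}^P_u\!\left(\mathds 1_{\{|\mathcal C(\bar 0,1)|\geq k\}}\right)
\;\geq\; \bigl(1 - e^{-\tilde\gamma k} - \delta_{\tilde\gamma}(\bar 0,1)\bigr)\,\theta_\lambda^{L,n}(k) - \sum_{e\in E_n^L}\delta_{\tilde\gamma}(e)\,\textsc{Inf}^P_e\!\left(\mathds 1_{\{|\mathcal C(\bar 0,1)|\geq k\}}\right).
\]

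Next I would divide both sides by $\delta_{\tilde\gamma}(\bar 0,1) > 0$, which rearranges the first bracket into the desired form $\tfrac{1 - e^{-\tilde\gamma k}}{\delta_{\tilde\gamma}(\bar 0,1)} - 1$ multiplying $\theta_\lambda^{L,n}(k)$, with the edge-influence sum appearing with a factor $1/\delta_{\tilde\gamma}(\bar 0,1)$ exactly as in the statement. This produces the left-hand side of the target inequality as a lower bound for $\tfrac{1}{\delta_{\tilde\gamma}(\bar 0,1)}\sum_{u\in V_n^L}\delta_{\tilde\gamma}(u)\,\textsc{Inf}^P_u(\mathds 1_{\{|\mathcal C(\bar 0,1)|\geq k\}})$.

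Finally I would apply Proposition~\ref{thm:4main} to upper bound the same vertex-influence sum, again after dividing by $\delta_{\tilde\gamma}(\bar 0,1)$, which yields
\[
\frac{1}{\delta_{\tilde\gamma}(\bar 0,1)}\sum_{(\bar u,m)\in V_n^L}\delta_{\tilde\gamma}(\bar u,m)\,\textsc{Inf}^P_{(\bar u,m)}\!\left(\mathds 1_{\{|\mathcal C(\bar 0,1)|\geq k\}}\right)
\;\lesssim\; \lambda \Bigl(\sum_{m\in M_n}\Pi(m,n)\,\mathfrak r_\lambda(m)\mathfrak R_\lambda(m)\Bigr)\,\frac{\dd}{\dd\lambda}\theta^{L,n}_\lambda(k).
\]
Chaining the two inequalities through the common vertex-influence quantity gives the claimed estimate, valid for all $k\geq 1$ and for $L,n$ large and $\tilde\gamma$ small, which is exactly the regime for which both input propositions apply.

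There is no real obstacle in this step: the work has already been done in the two source propositions, and the corollary amounts to an algebraic composition plus the recognition of $E[1 - e^{-\tilde\gamma|\mathcal C(\bar 0,1)|}]$ as $\delta_{\tilde\gamma}(\bar 0,1)$. The only care required is to ensure the parameter ranges $(L,n,\tilde\gamma)$ for which both Assumption~\ref{mainass} and the OSSS decomposition are simultaneously valid coincide, which follows immediately from the uniform wording of both hypotheses.
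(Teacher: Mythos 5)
Your proposal is correct and follows exactly the paper's own route: the paper's proof simply says the inequality follows from Proposition~\ref{prop:OSSSmain} and Proposition~\ref{thm:4main} with the choice $\gamma = k\tilde\gamma$, which is precisely the chaining you carry out. You helpfully make explicit the identification $E[1-e^{-\tilde\gamma|\mathcal C(\bar 0,1)|}]=\delta_{\tilde\gamma}(\bar 0,1)$ from the revealment-probability fact and the division by $\delta_{\tilde\gamma}(\bar 0,1)$, both of which the paper leaves implicit.
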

\begin{proof}
The inequality follows directly from Proposition~\ref{prop:OSSSmain} and Proposition~\ref{thm:4main} by choosing $\gamma=k\tilde\gamma$.
\end{proof}

\begin{corollary}\label{cor:3.2} 
Let $\lambda>0$ and suppose that the estimates from Assumption~\ref{mainass} hold for some functions $\mathfrak r_\lambda$ and $\mathfrak R_\lambda$. Then, we have
 \begin{align}\label{est: 1cor3.10}
\nonumber&\left(\frac{k}{E_\lambda^{L,n}[\vert \mathcal C(\Bar{0},1)\vert]}-1\right) \theta^{L,n}_\lambda(k)-\sup_{\tilde\gamma>0}\frac{1}{\delta_{\tilde\gamma}(\Bar{0},1)}\sum_{e\in E_n^L}\delta_{\tilde\gamma}(e) \textsc{Inf}^P_e(\mathds{1}_{\{\vert \mathcal C(\Bar{0},1)\vert\geq k\}})
\\\lesssim&
\left(\lambda\sum_{m\in M_n} \Pi(m,n) \mathfrak r_\lambda(m)\mathfrak R_\lambda(m)\right) \frac{\dd}{\dd \lambda}\theta^{L,n}_\lambda(k)
\end{align}
 for any $k\in\N$ and sufficiently large $L,n\in\N$. Moreover, we have
 \begin{align}\label{est: 2cor3.10}
\nonumber&\left(\frac{k\left(1-\e^{-1}\right)}{ \sum_{i=1}^{k}\theta_\lambda^{L,n}(i)}-1\right) \theta^{L,n}_\lambda(k)-\sup_{\tilde\gamma>0}\frac{1}{\delta_{\tilde\gamma}(\Bar{0},1)}\sum_{e\in E_n^L}\delta_{\tilde\gamma}(e) \textsc{Inf}^P_e(\mathds{1}_{\{\vert \mathcal C(\Bar{0},1)\vert\geq k\}})
\\\lesssim &
\left(\lambda\sum_{m\in M_n} \Pi(m,n) \mathfrak r_\lambda(m)\mathfrak R_\lambda(m)\right) \frac{\dd}{\dd \lambda}\theta^{L,n}_\lambda(k)
\end{align}
for sufficiently large $L,n,k\in\N$. 
\end{corollary}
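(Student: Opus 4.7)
My plan is to obtain both inequalities by specializing the parameter $\tilde\gamma$ in Corollary~\ref{cor:3.1} and then replacing the specific edge term at $\tilde\gamma$ by its supremum over $\tilde\gamma' > 0$. Since this edge term enters the LHS with a minus sign, replacing it by a larger quantity only weakens the inequality, so the substitution is free; this accounts for the appearance of $\sup_{\tilde\gamma>0}$ in~\eqref{est: 1cor3.10} and~\eqref{est: 2cor3.10}. Note that the RHS of Corollary~\ref{cor:3.1} is $\tilde\gamma$-independent, so nothing needs to be done there.

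To obtain~\eqref{est: 1cor3.10} I pass to the limit $\tilde\gamma \to 0^+$. Writing $\delta_{\tilde\gamma}(\Bar 0,1) = E_\lambda^{L,n}\bigl[1-e^{-\tilde\gamma|\mathcal C(\Bar 0,1)|}\bigr]$, dividing numerator and denominator of $(1-e^{-\tilde\gamma k})/\delta_{\tilde\gamma}(\Bar 0,1)$ by $\tilde\gamma$, and applying monotone convergence (since $(1-e^{-\tilde\gamma x})/\tilde\gamma \uparrow x$ as $\tilde\gamma \downarrow 0$ for each $x\ge 0$), one gets
\[
\frac{1-e^{-\tilde\gamma k}}{\delta_{\tilde\gamma}(\Bar 0,1)} \;\xrightarrow[\tilde\gamma\to 0^+]{}\; \frac{k}{E_\lambda^{L,n}[|\mathcal C(\Bar 0,1)|]}.
\]
If $E_\lambda^{L,n}[|\mathcal C(\Bar 0,1)|]=\infty$, this limit is $0$ and the first bracket on the LHS of~\eqref{est: 1cor3.10} is already nonpositive, so the inequality is trivial (its RHS is nonnegative by Russo's formula); otherwise the limit is legitimate and yields the claim.

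For~\eqref{est: 2cor3.10} I substitute $\tilde\gamma = 1/k$, which lies in the ``sufficiently small'' range of Corollary~\ref{cor:3.1} once $k$ is large enough; then $1-e^{-\tilde\gamma k} = 1-e^{-1}$. The decisive step is a sharp upper bound on $\delta_{1/k}(\Bar 0,1)$, obtained from the elementary pointwise estimate $1-e^{-x}\le\min(x,1)$ for $x\ge 0$:
\[
\delta_{1/k}(\Bar 0,1) \;\le\; E_\lambda^{L,n}\!\left[\min\!\left(\tfrac{|\mathcal C(\Bar 0,1)|}{k},\,1\right)\right] \;=\; \frac{1}{k}\,E_\lambda^{L,n}\!\bigl[\min(|\mathcal C(\Bar 0,1)|,\,k)\bigr] \;=\; \frac{1}{k}\sum_{i=1}^{k}\theta_\lambda^{L,n}(i),
\]
where the last equality is the layer-cake identity $E[\min(X,k)]=\sum_{i=1}^k P(X\ge i)$ for $\N$-valued $X$. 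Rearranging yields $(1-e^{-1})/\delta_{1/k}(\Bar 0,1) \ge k(1-e^{-1})/\sum_{i=1}^k \theta_\lambda^{L,n}(i)$, and plugging this bound back into Corollary~\ref{cor:3.1} together with the supremum replacement produces~\eqref{est: 2cor3.10}.

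I do not anticipate a serious obstacle: both statements are essentially reformulations of Corollary~\ref{cor:3.1} under optimized choices of $\tilde\gamma$. The only subtle point is recovering the tight constant $(1-e^{-1})$ in~\eqref{est: 2cor3.10}; a naive split of the sum representation $\delta_{1/k}(\Bar 0,1) = (1-e^{-1/k})\sum_j e^{-(j-1)/k}\theta_\lambda^{L,n}(j)$ at $j=k$ produces a spurious factor $(1+e^{-1})$, whereas the pointwise estimate $1-e^{-x}\le\min(x,1)$ is exactly sharp enough to yield the stated constant.
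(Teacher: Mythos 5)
Your proof is correct and follows essentially the same route as the paper: specialize $\tilde\gamma$ in Corollary~\ref{cor:3.1}, replace the $\tilde\gamma$-dependent edge term by its supremum, and use the bounds $\delta_{\tilde\gamma}(\Bar 0,1)\le\tilde\gamma\,E_\lambda^{L,n}[|\mathcal C(\Bar 0,1)|]$ (limit $\tilde\gamma\to 0$) and $\delta_{1/k}(\Bar 0,1)\le\tfrac1k\sum_{i=1}^k\theta_\lambda^{L,n}(i)$ (choice $\tilde\gamma=1/k$), the latter being exactly the estimate the paper states via $\delta_{\gamma/k}(\Bar 0,1)\le\tfrac{\gamma}{k}\sum_{i=1}^{\lceil k/\gamma\rceil}\theta_\lambda^{L,n}(i)$. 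Your derivation of that key bound from $1-e^{-x}\le\min(x,1)$ and the layer-cake identity is a clean way to justify the estimate, which the paper simply asserts.
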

\begin{remark}
These differential inequalities correspond to the one from~\cite[Proposition 1.4]{Hut22}. The main difference in our estimates is the presence of the expression given as the sum of edge influences, but in Appendix~\ref{AIII} we show that it vanishes in the limit $n\to\infty$, allowing us to recover Hutchcroft's estimates.
\end{remark}
\begin{proof} We set $\tilde{\gamma}=\frac{\gamma}{k}$. Using the estimate
$$
\delta_{\frac \gamma k}(\Bar{0},1)\leq \frac{\gamma}{k} \sum_{i=1}^{\left\lceil\frac{k}{\gamma}\right\rceil}\theta_\lambda^{L,n}(i)
$$ 
 and setting $\gamma=1$, inequality~\eqref{est: 2cor3.10} follows directly from Corollary~\ref{cor:3.1}. On the other hand, inequality~\eqref{est: 1cor3.10} follows from Corollary~\ref{cor:3.1} by taking the limit $\gamma\to 0$ --- using the estimates
$$\delta_{\frac \gamma k}(\Bar{0},1)\leq \frac{\gamma}{k}E_\lambda^{L,n}[\vert \mathcal C(\Bar{0},1)\vert]$$
and
$$
\lim_{\gamma\to 0}\frac{1 - \e^{-\gamma}}{\delta_{\frac \gamma k}{(\Bar{0},1)}}\geq\lim_{\gamma\to 0} \frac{1 - \e^{-\gamma}}{\gamma}\frac{k}{E_\lambda^{L,n}[\vert \mathcal C(\Bar{0},1)\vert]} = \frac{k}{E_\lambda^{L,n}[\vert \mathcal C(\Bar{0},1)\vert]}.
$$
Notice that inequality~\eqref{est: 1cor3.10} holds for any $k\in\N$.
\end{proof}

\section{Min-reach RCM satisfies Assumptions~\ref{mainass}}\label{sect:Pf_mainass}

In this section, we show that the min-reach RCM~\eqref{ass:eff-fin-range} satisfies Assumptions~\ref{mainass}, allowing us to apply the general results of Section~\ref{sect:Inf_vol_lim} under suitable moment conditions. The growth rates of the bounds $\mathfrak r_\lambda$ and $\mathfrak R_\lambda$ determine the required moment assumptions. Using the estimates obtained here, which are exponential in the reach of a vertex, we obtain the moment condition~\eqref{assumption_moment_condition}. Any improvement in the order of these bounds would yield a corresponding weakening of~\eqref{assumption_moment_condition}.

\subsection{Preparations: Notation and definitions}
Let the adjacency function  $\varphi$ satisfy Assumption~\eqref{ass:eff-fin-range}, i.e., for all $x_1,x_2\in \R^d$ and for all $m_1,m_2\geq 1$, we have
\begin{align*}
\varphi((x_1, m_1), (x_2,m_2))=0 \text { for } \vert\vert x_1-x_2 \vert\vert>R\bigl(\min\{m_1,m_2\}\bigr)
\end{align*}
and
\begin{align*}
\varphi((x_1, 1), (x_2,1))>0\text{ for }\vert\vert x_1-x_2 \vert\vert< R(1) 
\end{align*}
for some non-decreasing function $R:[1,\infty)\to (0,\infty)$.
~\\\\
For $r\in\R$ and $\Bar{u}\in U_n^L$, we denote by $B_r(\Bar{u})$ the ball of radius $r$ around $\Bar{u}$, i.e., $B_r(\Bar{u})=\{\Bar{v} \in U_n^L: \vert\vert\Bar{v}-\Bar{u} \vert\vert\leq r\}$. Furthermore, we denote by $Z_r(\Bar{u})$  the cylinder set of radius $r$ around $\Bar{u}$, i.e., $Z_r(\Bar{u})=B_r(\Bar{u})\times M_n$. We generally say that $Z$ is a cylinder set if $Z=B\times M_n$ for some $B\subset U_n^L$. 
~\\\\
For $L,n\in \N$, let $(\Bar{u},m)\in V_n^L$. Notice that $\mathcal N(\Bar{u},m)\subset Z_{R(m)}(x)$ holds almost surely by~\eqref{ass:eff-fin-range}. Let $\mathfrak C (\Bar{u},m)$ denote the random set of connected components of $\mathcal C(\Bar{u},m)\setminus\{(\Bar{u},m)\}$  and let $\tilde {\mathfrak C} (\Bar{u},m)$ denote the random set of connected components of $\mathcal C(\Bar{u},m)\setminus\{(\Bar{u},m)\}$ that do not contain $(\Bar{0},1)$. Then $\vert\tilde{\mathfrak C}(\Bar{u},m)\vert\leq\vert\mathfrak C(\Bar{u},m)\vert\leq  \vert \mathcal N(\Bar{u},m)\vert$ holds deterministically.
Let \begin{itemize} 
\item $\mathfrak Q(\Bar{u},m):=\{Q \subset \mathcal C(\Bar{u},m)\vert \ \forall C\in\mathfrak C(\Bar{u},m)\exists ! \ x\in Q: x\in C\}$
\item $\tilde{\mathfrak Q}(\Bar{u},m):=\{Q \subset \mathcal C(\Bar{u},m)\vert \ \forall C\in\tilde{\mathfrak C}(\Bar{u},m)\exists ! \ x\in Q: x\in C\}$.
\end{itemize}
The main idea behind these definitions is the following: Take any $Q\in \mathfrak Q(\Bar{u},m)$. Given that the vertex $(\Bar{u},m)$ is connected to an open green vertex, we can guarantee that $(\Bar{u},1)$ is connected to an open green vertex by connecting to every vertex in $Q$. The set  $\tilde{\mathfrak Q}(\Bar{u},m)$ plays a similar role for the pivotality of closed vertices. 

Notice that $\vert Q\vert=\vert\mathfrak C(\Bar{u},m)\vert$ for every $Q\in\mathfrak Q(\Bar{u},m)$ and that there exists a $Q\in\mathfrak Q(\Bar{u},m)$ with $Q\subset \mathcal N(\Bar{u},m)$. Next, we want to define the event that will help us formalize the fact that we can locally control the cardinality of $Q$ (with high probability). For $\ell\in\N$ and  $Z\subset Z_{R(m)}(\Bar{u})$, we introduce the events
 \begin{itemize}
     \item $Q^Z_\ell(\Bar{u},m):=\{\vert Q\cap Z \vert\leq \ell\textit{ holds for any }Q\in \mathfrak Q(\Bar{u},m)\}$,
     \item $\tilde{Q}^Z_\ell(\Bar{u},m):=\{\vert Q\cap Z\vert\leq \ell\textit{ holds for any }Q\in \tilde{\mathfrak Q}(\Bar{u},m)\}$.
 \end{itemize}
 
Next, let us introduce the event
\begin{align*}
G(\Bar{u},m):=\{(\Bar{u},m)\in \mathcal C(v) \text{ for a green open vertex } v \}
\end{align*}
and recall the definition of the events $A^k(\Bar{u},m)$ from the previous section. Again, we lighten the notation by fixing $\lambda,\tilde\gamma>0$ as well as sufficiently large parameters $L,n\in\N$ and denoting (as in the previous section)
$$
P=P_\lambda^{L,n}, \quad \mu=P_\lambda^{L,n}\otimes Q_{\tilde\gamma}^{L,n}.
$$

\begin{definition}\label{def:l_hat}
Fix $0<\varepsilon<1$ and set $\underline{R}=\epsilon R(1)$. Furthermore, set 
\begin{align*}
\hat \ell=\min \left\{\ell\in\N: \prod_{i=1}^\infty\left(1-q^{\ell+i}\right)\geq \frac{1}{2}\right\},
\end{align*}
where $q:=1-\varphi(\underline{R},1,1)$.
\end{definition}

\subsection{Treating the revealment probabilities}

First, we establish uniform control over ratios of revealment probabilities for vertices of varying weights, as required in Assumption~\ref{mainass_a}. 

\begin{lemma}\label{lem:aux_rev}
Let $\ell\geq \hat \ell$ (given by Definition~\ref{def:l_hat}), then --- for any cylinder $Z$ satisfying $\max_{x,y\in Z}\dist(x,y)\leq \underline{R}$ --- we have
\begin{align}\label{est:aux_rev1}
\mu\left(G(\Bar{u},m)\right)\leq 2\mu\left(G(\Bar{u},m)\cap Q^Z_\ell(\Bar{u},m)\right).
\end{align} 
Moreover, we obtain --- for any partition of $Z_{R(m)}(\Bar{u})$ into disjoint cylinders $Z_1,\ldots, Z_M$ with $\max_{x,y\in Z_k}\dist(x,y)\leq \underline{R}$ for all $1\leq k\leq M$ --- the estimate 
\begin{align}\label{est:aux_rev2}
\mu\left(G(\Bar{u},m)\right)\leq 2^M\mu\left(G(\Bar{u},m)\cap \bigcap_{1\leq k\leq M} Q^{Z_{k}}_\ell(\Bar{u},m)\right).
\end{align} 

\end{lemma}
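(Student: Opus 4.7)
Both inequalities are of the form $\mu(G(\Bar{u},m))\leq c\cdot\mu(G(\Bar{u},m)\cap(\text{event}))$, so it suffices to produce lower bounds on the conditional probability of the event given $G(\Bar{u},m)$. My plan is to condition on the $\sigma$-algebra $\mathcal E_Z$ generated by all vertex states, the ghost marks $\rho$, and all edges of $G_n^L$ with at most one endpoint in $Z$; under $\mathcal E_Z$, the remaining randomness is the family of \emph{internal} $Z$-edges, which is an independent Bernoulli family under $\mu=P\otimes Q$. Both $G(\Bar{u},m)$ and $Q^Z_\ell(\Bar{u},m)$ are increasing in this internal field: opening an internal edge can only enlarge $\mathcal C(\Bar{u},m)$ (helping $G$) and can only merge components of $\mathcal C(\Bar{u},m)\setminus\{(\Bar{u},m)\}$ meeting $Z$ (helping $Q^Z_\ell$). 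The FKG inequality applied to the conditional product measure therefore yields
\[
\mu\bigl(G\cap Q^Z_\ell\,\big|\,\mathcal E_Z\bigr)\geq \mu(G\mid\mathcal E_Z)\cdot\mu(Q^Z_\ell\mid\mathcal E_Z),
\]
so upon integration the proof of the first inequality reduces to establishing $\mu(Q^Z_\ell\mid\mathcal E_Z)\geq \tfrac12$ almost surely.

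Conditional on $\mathcal E_Z$, the partial cluster $\mathcal C^{\mathrm{ext}}(\Bar{u},m)$ built from the external edges is determined, and with it the proto-components $P_1,\dots,P_K$ of $\mathcal C^{\mathrm{ext}}(\Bar{u},m)\setminus\{(\Bar{u},m)\}$ meeting $Z$. If $K\leq\ell$ the bound is immediate; otherwise I fix a deterministic representative $x_j\in P_j\cap Z$ and set $\xi_j:=\mathds 1\{\text{no internal edge }\{x_j,x_i\},\ i<j,\ \text{is open}\}$ for $j=1,\dots,K$. A straightforward greedy union-find shows that the number of connected components in the graph on $\{x_1,\dots,x_K\}$ induced by internal edges equals $\sum_j\xi_j$, and any indirect merge through vertices of $Z$ not belonging to $\mathcal C^{\mathrm{ext}}(\Bar{u},m)$ only further reduces the count of components of $\mathcal C(\Bar{u},m)\setminus\{(\Bar{u},m)\}$ meeting $Z$; so it is enough to bound $\sum_j\xi_j$ by $\ell$ with probability at least $\tfrac12$. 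The edge sets consulted by distinct $\xi_j$'s are pairwise disjoint, hence the $(\xi_j)_j$ are independent under the conditional measure; moreover $\mathrm{diam}(Z)\leq\underline{R}\leq R(1)\leq R(\min\{m_{x_i},m_{x_j}\})$ together with Assumption~(A.3) gives $\varphi(|x_i-x_j|;m_{x_i},m_{x_j})\geq \varphi(\underline R;1,1)=1-q$, so $\mathbb P(\xi_j=1)\leq q^{j-1}$. Requiring $\xi_j=0$ for every $j>\ell$ is sufficient for $\sum_j\xi_j\leq\ell$, whence
\[
\mu(Q^Z_\ell\mid\mathcal E_Z)\geq \prod_{j>\ell}\bigl(1-q^{j-1}\bigr),
\]
which, after matching indices, is at least $\tfrac12$ by $\ell\geq\hat\ell$ and the defining property of $\hat\ell$.

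For the partition estimate, the same strategy iterates cleanly: condition on $\mathcal E_{\cup_k Z_k}$, which reveals all edges not internal to any $Z_k$ together with the vertex states and the ghosts. The internal edge families of different $Z_k$'s are then pairwise disjoint sub-families of the independent Bernoulli field, so all of $G$ and $Q^{Z_1}_\ell,\dots,Q^{Z_M}_\ell$ remain increasing in the joint internal configuration, and FKG yields
\[
\mu\Bigl(G\cap\bigcap_k Q^{Z_k}_\ell\,\Big|\,\mathcal E_{\cup_k Z_k}\Bigr)\geq \mu(G\mid\mathcal E_{\cup_k Z_k})\prod_{k=1}^M \mu(Q^{Z_k}_\ell\mid\mathcal E_{\cup_k Z_k}),
\]
and the merging argument above applies verbatim to each $Z_k$ (via $\mathrm{diam}(Z_k)\leq\underline R$) to give each factor at least $\tfrac12$, so the advertised $2^{-M}$ bound follows upon integration. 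The principal obstacle I anticipate is the combinatorial bookkeeping in the second paragraph, namely verifying that greedy processing of representatives in order really realizes $\sum_j\xi_j$ as the number of components of the internal graph on $\{x_1,\dots,x_K\}$, and that vertices of $Z$ joining $\mathcal C(\Bar{u},m)$ only through internal edges merely attach to existing proto-components rather than spawn new components meeting $Z$. Once this bookkeeping is secured and the product of Bernoulli factors is matched against the exact form defining $\hat\ell$, the bounds follow.
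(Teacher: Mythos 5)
Your approach is genuinely different from the paper's: where the paper conditions on $\mathcal C(\bar u,m)\cap Z$ and asserts (without detailed verification) conditional independence of $G$ and $Q^Z_\ell$, you condition on the $\sigma$-algebra $\mathcal E_Z$ of all non-internal information and invoke FKG. For the single cylinder this is a clean route, and the verification that $Q^Z_\ell$ is increasing in the internal-$Z$ field is correct: an internal edge $\{x,y\}$ with $x,y\in Z$ either merges two components that each already meet $Z$, or attaches a piece through a $Z$-vertex into a component that already meets $Z$ at $x$, so the count of components meeting $Z$ never rises.

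However, the iteration to the partition estimate breaks at exactly the point where you assert that ``all of $G$ and $Q^{Z_1}_\ell,\dots,Q^{Z_M}_\ell$ remain increasing in the joint internal configuration.'' This is false: $Q^{Z_1}_\ell$ is \emph{not} increasing in the internal edges of $Z_2$. Concretely, take $o=(\bar u,m)$, vertices $a_1,a_2\in Z_1$, $b,c\in Z_2$, with open edges $\{o,a_1\}$, $\{o,b\}$, and $\{c,a_2\}$ (the last one goes between two cylinders, so it is revealed under $\mathcal E_{\cup_k Z_k}$), all vertices open, and vary the $Z_2$-internal edge $\{b,c\}$. When $\{b,c\}$ is closed, $\mathcal C(o)=\{o,a_1,b\}$ and exactly one component ($\{a_1\}$) meets $Z_1$; when $\{b,c\}$ is open, $a_2$ joins the cluster and $\mathfrak C=\{\{a_1\},\{b,c,a_2\}\}$, so two components meet $Z_1$. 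Thus opening a $Z_2$-internal edge sends $Q^{Z_1}_1$ from true to false. The difficulty is intrinsic: an internal edge of a different cylinder can add a \emph{new} vertex of $Z_1$ to the cluster in a component that did not previously meet $Z_1$, which is precisely what cannot happen when the opened edge has both endpoints in $Z_1$ itself. Because of this, the joint FKG step and also any sequential peeling (conditioning on $\mathcal E_{Z_k}$ one cylinder at a time with $Q^{Z_1}_\ell\cap\cdots\cap Q^{Z_{k-1}}_\ell$ already in place) both fail, and the multi-cylinder bound \eqref{est:aux_rev2} is not established by your argument. You would need some replacement for FKG across cylinders --- this is where the paper instead conditions on the full cluster $\mathcal C(\bar u,m)$ and uses conditional independence.

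A second, more minor issue: your lower bound for the single cylinder is $\prod_{j>\ell}(1-q^{j-1})=\prod_{i\ge \ell}(1-q^i)$, while Definition~\ref{def:l_hat} guarantees only $\prod_{i\ge \ell+1}(1-q^i)\ge\tfrac12$ for $\ell\ge\hat\ell$. Your product carries the extra factor $(1-q^{\ell})<1$ and is therefore not automatically $\ge\tfrac12$; as written, your argument needs $\ell\ge\hat\ell+1$, so ``after matching indices'' is doing work that should be made explicit (or the greedy should be set up so that the first candidate costs nothing, shifting the exponent by one). This off-by-one is harmless for the theorem that uses the lemma, but it is a real discrepancy with the stated $\hat\ell$.
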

\begin{proof}
Consider a cylinder set $Z$ as in the hypothesis of the lemma. First of all we notice that, conditioned on $\mathcal C(\Bar{u},m)\cap Z$, the events $G(\Bar{u},m)$ and $Q^Z_\ell(\Bar{u},m)$ are stochastically independent with respect to $\mu$. Moreover,  we can estimate the probability of $(Q^Z_\ell(\Bar{u},m))^c$ from above by the probability of $\mathcal C(\Bar{u},m)\cap Z$ containing an independent vertex set of size at least $\ell+1$ (in the sense that no two vertices from that set are connected by an edge). Conditioned on $\vert \mathcal C(\Bar{u},m)\cap Z\vert=\ell+j$ for any $j\in\N$,  the latter probability is bounded from above by $1-\prod_{i=1}^j\left(1-q^{\ell+i}\right)$. Altogether, we get 
\begin{align*}
&
\mu\big(G(\Bar{u},m)\cap (Q^Z_\ell(\Bar{u},m))^c\big)  
\\=&
\sum_{j=1}^\infty\mu\big(G(\Bar{u},m)\cap (Q^Z_\ell(\Bar{u},m))^c \mid \vert \mathcal C(\Bar{u},m)\cap Z\vert=\ell+j)\mu(\vert \mathcal C(\Bar{u},m)\cap Z\vert=\ell+j)
\\\leq& 
\sum_{j=1}^\infty\mu(G(\Bar{u},m)\mid \vert \mathcal C(\Bar{u},m)\cap Z\vert=\ell+j)\left((1-\prod_{i=1}^j\left(1-q^{\ell+i}\right)\right)\mu(\vert \mathcal C(\Bar{u},m)\cap Z\vert=\ell+j)
\\\leq& 
\left((1-\prod_{i=1}^\infty\left(1-q^{\ell+i}\right)\right)\sum_{j=1}^\infty\mu(G(\Bar{u},m)\mid \vert \mathcal C(\Bar{u},m)\cap Z\vert=\ell+j)\mu(\vert \mathcal C(\Bar{u},m)\cap Z\vert=\ell+j)
\\\leq&\left((1-\prod_{i=1}^\infty\left(1-q^{\ell+i}\right)\right)\mu(G(\Bar{u},m)).
\end{align*}
and, by our choice of $\ell$, we have $$\left((1-\prod_{i=1}^\infty\left(1-q^{\ell+i}\right)\right)\leq \frac 1 2$$ which concludes the proof for the estimate~\eqref{est:aux_rev1}. Given a partition $Z_1,\ldots, Z_M$ as in the hypothesis of the lemma, we use the fact that the cylinders $Z_1, \ldots,Z_k$ are disjoint (and, conditioned on $\mathcal C(\Bar{u},m)$, the events $Q^{Z_1}_\ell(\Bar{u},m), \ldots,Q^{Z_M}_\ell(\Bar{u},m)$ are therefore stochastically independent) to successively re-iterate the above argument $M$ times --- obtaining the estimate~\eqref{est:aux_rev2}.
\end{proof}

Now we are ready to show that Assumption~\ref{mainass_b}{a)} is satisfied for the min-reach RCM and specify the corresponding family of functions $(\mathfrak R_\lambda)_{\lambda>0}$:
\begin{theorem}\label{thm:ta_rev}
There exists a constant $C(\lambda)\geq 1$ such that the min-reach RCM satisfies Assumption~\ref{mainass_b}{a)} for the choice
\begin{align*}
  \mathfrak R_\lambda(m)=C(\lambda)^{R(m)^d},
\end{align*}
in particular, $\lambda\mapsto C(\lambda)$ is non-increasing on $(0,\infty)$.
\end{theorem}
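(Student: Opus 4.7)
My plan is to combine Lemma~\ref{lem:aux_rev} with a sprinkling argument that relays the revealment of $(\bar u, m)$ to that of $(\bar u, 1)$ through a dense skeleton of weight-near-$1$ vertices placed inside the reach ball $Z_{R(m)}(\bar u)$. Set $\underline R = \varepsilon R(1)$ and $\hat\ell$ as in Definition~\ref{def:l_hat}, and partition $Z_{R(m)}(\bar u)$ into sub-cylinders $Z_1,\dots,Z_M$ of spatial diameter at most $\underline R$ with $M \le c_d R(m)^d/\underline R^d$ for a dimensional constant $c_d$. The multi-cylinder bound in Lemma~\ref{lem:aux_rev} then gives
\[
   \mu(G(\bar u, m)) \le 2^M\, \mu(E), \qquad E := G(\bar u, m) \cap \bigcap_{k=1}^M Q^{Z_k}_{\hat\ell}(\bar u, m),
\]
and on $E$ one can choose a representative set $Q \subset \mathcal N(\bar u, m) \subset Z_{R(m)}(\bar u)$ of size at most $M\hat\ell$ intersecting every component of $\mathcal C(\bar u, m) \setminus \{(\bar u, m)\}$.

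Next, I would construct a deterministic geometric skeleton: spatial cells $B_1, \dots, B_M$ of diameter $\le \underline R$ covering $Z_{R(m)}(\bar u)$, with an adjacency relation between cells whose centers lie at spatial distance $\le \underline R$. Define the skeleton event $S$ as the conjunction of (i) each $B_k$ containing at least one open vertex $w_k$ with weight in a small fixed band $[1, 1+\delta_0]$; (ii) for every pair of adjacent cells the edge $w_k w_{k'}$ is open, and $(\bar u, 1)$ is open and connected by an open edge to the skeleton vertex in the cell containing $\bar u$; and (iii) for every $q \in Q$, an open edge between $q$ and the skeleton vertex in the cell containing $q$. Since $\underline R < R(1)$, assumptions (A.3) and the lower bound in (NB) ensure that each of the $O(M)$ constituent conditions has $\mu$-probability at least some $c(\lambda) \in (0, 1)$ non-decreasing in $\lambda$, so that $\mu(S) \ge c(\lambda)^{O(M)}$.

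On $E \cap S$, the skeleton forms a connected weight-near-$1$ subgraph reaching every cell from $(\bar u,1)$; by (iii) this subgraph is further linked to every $q \in Q$, which in turn lies in the component of $\mathcal C(\bar u, m) \setminus \{(\bar u, m)\}$ containing the open green vertex witnessing $G(\bar u, m)$ (the marginal case where $(\bar u, m)$ itself carries the witnessing greenness can be absorbed similarly by noting that $\varphi(0;1,m)>0$). Hence $(\bar u, 1)$ is connected to a green open vertex, i.e., $G(\bar u, 1)$ holds. Combining with the reduction,
\[
   \delta_{\tilde\gamma}(\bar u, m) \;=\; \mu(G(\bar u, m)) \;\le\; 2^M\, c(\lambda)^{-O(M)}\, \mu(G(\bar u, 1)) \;\le\; C(\lambda)^{R(m)^d}\, \delta_{\tilde\gamma}(\bar u, 1),
\]
for a constant $C(\lambda) = (2/c(\lambda))^{c_d/\underline R^d}$ which is non-increasing in $\lambda$.

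The main obstacle lies in justifying $\mu(E \cap S) \gtrsim c(\lambda)^{O(M)}\, \mu(E)$. The event $E$ is not monotone, since the cut conditions $Q^{Z_k}_{\hat\ell}$ need not be increasing or decreasing, so FKG does not apply directly; moreover the attachment condition (iii) depends on the random set $Q$. I would resolve this by a two-stage sprinkling: first condition on the full cluster $\mathcal C(\bar u, m)$ together with its internal edge configuration, so that both $E$ and $Q$ become measurable; then build $S$ exclusively from Bernoulli variables over coordinates not inspected by this exploration, namely fresh weight-near-$1$ lattice points in each cell $B_k$ and edges incident to them. Each of the $O(M)$ skeleton constraints then relies on independent resources and succeeds with probability at least $c(\lambda)$. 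The delicate verification is to show that sufficiently many lattice vertices in each small cell remain unexposed by the cluster exploration with uniformly positive conditional probability; this can be arranged by choosing the weight band $\delta_0$ and the subdivision of $Z_{R(m)}(\bar u)$ fine enough that the pool of candidate fresh vertices in each cell dominates the number potentially consumed by the cluster, with the resulting constants absorbed into $c(\lambda)$.
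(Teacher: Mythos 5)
Your proposal is essentially correct and mirrors the paper's argument in its key respects: the same cylinder partition with $M\lesssim R(m)^d$ cells of spatial diameter at most $\underline R$, the same use of Lemma~\ref{lem:aux_rev} to control the number of components of $\C(\bar u,m)\setminus\{(\bar u,m)\}$ that must be bridged, the same idea of paying a cost exponential in $M$ to relay the green connection from $(\bar u,m)$ to $(\bar u,1)$, and the same observation that monotonicity of the per-cell success probability in $\lambda$ yields the non-increasing $C(\lambda)$.

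Two things differ, one cosmetic and one substantive. Cosmetically, the paper builds a one-dimensional chain $(\bar u,1),v_1,\dots,v_M$ through consecutive cylinders rather than your full adjacency skeleton, and it does not restrict relay vertices to a weight band near $1$ — it only uses that weights are $\ge 1$, so that (A.3) gives $\varphi(\cdot;a,b)\ge\varphi(\underline R;1,1)$ whenever the spatial distance is at most $\underline R$; your band restriction and the associated sprinkling parameter $\delta_0$ are unnecessary overhead. Substantively, and this is the step you correctly flag as delicate, the two treatments of the non-monotone conditioning differ. The paper defines the relay event $\Theta$ (which depends on the random representative set $Q$), notes that conditionally on $Q$ both $\Theta$ and $G(\bar u,m)$ are increasing, and applies a conditional FKG inequality to split off $\mu\bigl(\Theta\mid\bigcap_q Q^{Z_q}_{\hat\ell}\bigr)$, which it bounds below by $\bigl((1-e^{-\lambda C_1})\varphi(\underline R;1,1)^{\hat\ell+1}\bigr)^M$. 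You instead propose conditioning on the full cluster $\C(\bar u,m)$ and its internal edges and then building the skeleton from fresh coordinates, which requires a further verification that enough unexposed vertices remain in each cell. Both routes address the same obstruction — the cut events $Q^{Z_k}_{\hat\ell}$ are not monotone and $Q$ is random — so your plan is sound; it is more explicit at the price of extra bookkeeping, while the paper's conditional FKG step is shorter but stated tersely and would benefit from exactly the kind of unpacking you sketch.
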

\begin{proof}
 Let $Z_1,\ldots,Z_M$ be a partition of $Z_{R(m)}(\Bar{u})$ into cylinder sets such that
\begin{enumerate}
    \item $\max_{x,y\in Z_q}\dist(x,y)\leq \underline{R}$ for all $1\leq q\leq M$,
    \item $(\Bar{u},1)\in Z_1$ and $\max_{x\in Z_q,y\in Z_{q+1}}\dist(x,y)\leq \underline{R}$ for all $1\leq q<M$, 
    \item $P\left(\exists v\in Z_q\vert\ v\text{ is open}\right)\geq 1-\e^{-\lambda C_1}$ for some constant $C_1>0$ and all $1\leq q\leq M$.
    \item $M\leq C_2  R(m)^d$ for some constant $C_2>0$.
\end{enumerate}
Clearly, such a partition exists and we have, for any $(\Bar{u},m)\in V^L_n$,
\begin{align*}
    \delta_{\tilde\gamma}(\Bar{u},1) = \mu(G(\Bar{u},1))
    \geq \mu\left(G(\Bar{u},1)\cap G(\Bar{u},m)\cap \bigcap_{1\leq q\leq M} Q^{Z_{q}}_{\hat\ell}(\Bar{u},m)\right).
\end{align*}
Next, let us fix some rule to choose $Q\in \mathfrak Q(\Bar{u},m)\,\cap\,\mathcal N(\Bar{u},m)$. Now, on the event $G(\Bar{u},m)\cap \left(\bigcap_{1\leq q\leq M} Q^{Z_{q}}_{\hat\ell}(\Bar{u},m)\right)$, the vertex $(\Bar{u},1)$ is connected to a green open vertex $v$ if the event $\Theta$ occurs which we define as follows:
\begin{enumerate}
    \item for every $1\leq q\leq M$, the cylinder set $Z_q$ contains an open vertex $v_q$ and the path $(\Bar{u},1),v_1,\ldots, v_M$ consists of open edges, and
    \item  for every $1\leq q\leq M$, each vertex in $Q\cap Z_q$ is connected to $v_q$ by a direct edge.
\end{enumerate}
We have
\begin{align*}
\delta_{\tilde\gamma}(\Bar{u},1) \geq  \mu\left(\Theta\cap G(\Bar{u},m)\cap\bigcap_{1\leq q \leq M} Q^{Z_q}_{\hat\ell}(\Bar{u},m)\right).   
\end{align*}
Now (since conditioned on $Q$ the events $\Theta$, $G(\Bar{u},m)$ are both increasing and we therefore can use the FKG-inequality for the corresponding conditional product measure, see~\cite{Gri99}) we obtain
\begin{align*}
&\mu\left(\Theta\cap G(\Bar{u},m)\cap\bigcap_{1\leq q \leq M} Q^{Z_q}_{\hat\ell}(\Bar{u},m)\right)
\\\geq&
\mu\left(\Theta\,\mid\, \bigcap_{1\leq q \leq M} Q^{Z_1}_{\hat\ell}(\Bar{u},m)\right)\mu\left(G(\Bar{u},m)\cap\bigcap_{1\leq q \leq M} Q^{Z_q}_{\hat\ell}(\Bar{u},m)\right)
\end{align*}
Left to observe is that, by our assumptions on the partition $Z_1,\ldots, Z_M$, we have the estimate
\begin{align}
\mu\left(\Theta\,\mid\, \bigcap_{1\leq q \leq M} Q^{Z_q}_{\hat\ell}(\Bar{u},m)\right)\geq \left(1-\e^{-\lambda C_1}\right)^{M}\varphi(\underline{R},1,1)^{M}\varphi(\underline{R},1,1)^{\hat\ell M}    
\end{align}
and that, by Lemma~\ref{lem:aux_rev}, we have
\begin{align*}    
    \mu\left(G(\Bar{u},m)\cap\bigcap_{1\leq q \leq M} Q^{Z_q}_{\hat\ell}(\Bar{u},m)\right)\geq \left(\frac 1 2\right)^{M}\mu\left(G(\Bar{u},m)\right).
\end{align*}
Combining these two observations and setting $p:=\frac 1 2\varphi(\underline{R},1,1)^{\hat\ell+1}$ yields
\begin{align*}
\mu(G(\Bar{u},1))\geq \mu\left(G(\Bar{u},m)\right) \left(p-p\e^{-\lambda C_1}\right)^{M}\geq \mu\left(G(\Bar{u},m)\right) \left(p-p\e^{-\lambda C_1}\right)^{C_2R(m)^d}.
\end{align*}
Left to notice is that $\lambda\mapsto C(\lambda):=(p-p\e^{-\lambda C_1})^{-C_2}$ is non-increasing on $(0,\infty)$. The claim of the theorem follows readily. 
\end{proof}
\begin{remark}
    Naturally, one can also easily obtain a bound of the form $$\mathfrak R_\lambda(m)=\hat C(\lambda)^{R(m)},$$
    but we choose to prove the rougher bound in order to outline the corresponding proof for the influences. 
\end{remark}

\subsection{Treating the influences}

 Next, we treat the ratios of the pivotality probabilities in a very similar fashion. However, the argument is slightly more involved. First, we state a result that is analogous to Lemma~\ref{lem:aux_rev}:

 \begin{lemma}\label{lem:aux_inf}

 Let $\ell\geq \hat \ell$ (given by Definition~\ref{def:l_hat}) and $k\in\N$, then --- for any cylinder $Z$ satisfying the bound $\max_{x,y\in Z}\dist(x,y)\leq \underline{R}$ --- we have
\begin{align}\label{est:aux_inf1}
P\left(A^k(\Bar{u},m)\right)\leq 2 P\left(A^k(\Bar{u},m)\cap \tilde Q^Z_\ell(\Bar{u},m)\right).
\end{align} 
Moreover, we obtain --- for any partition of $Z_{R(m)}(\Bar{u})$ into disjoint cylinders $Z_1,\ldots, Z_M$ with $\max_{x,y\in Z_q}\dist(x,y)\leq \underline{R}$ for all $1\leq q\leq M$ --- the estimate 
\begin{align}\label{est:aux_inf2}
P\left(A^k(\Bar{u},m)\right)\leq 2^M P\left(A^k(\Bar{u},m)\cap \bigcap_{1\leq q\leq M} \tilde Q^{Z_{q}}_\ell(\Bar{u},m)\right).
\end{align} 

\end{lemma}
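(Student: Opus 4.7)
The plan is to mirror the proof of Lemma~\ref{lem:aux_rev} step by step, with $A^k(\Bar{u},m)$ in place of $G(\Bar{u},m)$ and $\tilde{\mathfrak{Q}}(\Bar{u},m)$ in place of $\mathfrak{Q}(\Bar{u},m)$. The combinatorial heart of that proof --- that inside a cylinder $Z$ of diameter at most $\underline{R}$, the cluster $\mathcal{C}(\Bar{u},m)\cap Z$ fails to admit a choice $Q$ with $|Q\cap Z|\le \ell$ only if it contains an independent vertex set of size $\ell+1$, which by the definition of $\hat{\ell}$ has conditional probability at most $1/2$ uniformly in $|\mathcal{C}(\Bar{u},m)\cap Z|$ --- is a purely geometric statement about open clusters inside $Z$ and does not care about which exterior event one pairs it with.

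For the single-cylinder estimate~\eqref{est:aux_inf1}, I would condition on the random set $\mathcal{C}(\Bar{u},m)\cap Z$ together with the induced partition into components of $\mathcal{C}(\Bar{u},m)\setminus\{(\Bar{u},m)\}$ and, if relevant, the identity of the unique component containing $(\Bar{0},1)$. Under this conditioning, the event $\tilde{Q}^Z_\ell(\Bar{u},m)$ is determined by the internal edge configuration within $\mathcal{C}(\Bar{u},m)\cap Z$ (it is exactly the absence of a large independent subset of the union of those components in $Z$ that do not contain $(\Bar{0},1)$), while the pivotality event $A^k(\Bar{u},m)$ is determined by the status of $(\Bar{u},m)$ and by the configuration outside $\mathcal{C}(\Bar{u},m)\cap Z$. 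Consequently, the independent-set configurations which witness $(\tilde{Q}^Z_\ell)^c$ decouple from $A^k(\Bar{u},m)$, and running the same chain of inequalities as in the proof of Lemma~\ref{lem:aux_rev} --- splitting over $|\mathcal{C}(\Bar{u},m)\cap Z|=\ell+j$ and bounding the failure probability by $1-\prod_{i=1}^j (1-q^{\ell+i})$ --- yields $P\big(A^k\cap(\tilde{Q}^Z_\ell)^c\big)\le \tfrac12\, P(A^k)$, which rearranges to~\eqref{est:aux_inf1}.

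The multi-cylinder estimate~\eqref{est:aux_inf2} then follows by iterating the single-cylinder bound across the $M$ disjoint cylinders $Z_1,\dots,Z_M$: conditional on $\mathcal{C}(\Bar{u},m)$ together with its component structure, the events $\tilde{Q}^{Z_q}_\ell(\Bar{u},m)$ are independent (as they are functions of disjoint sets of edges), and so the factors $1/2$ accumulate multiplicatively to $2^{-M}$, exactly as in Lemma~\ref{lem:aux_rev}. The main subtle point --- and the step I expect to be the principal obstacle --- is justifying that restricting attention to $\tilde{\mathfrak{Q}}$ rather than $\mathfrak{Q}$ is both necessary and harmless in the pivotal setting. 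It is necessary because the component of $(\Bar{0},1)$ inside $\mathcal{C}(\Bar{u},m)$ already contributes to $\mathcal{C}(\Bar{0},1)$ regardless of whether $(\Bar{u},m)$ is opened, so it plays no role in the pivotal merge; it is harmless because discarding at most one component does not weaken the independent-set bound, which was stated uniformly in the size of $\mathcal{C}(\Bar{u},m)\cap Z$. Care must be taken to set up the conditioning so that this discarded component is measurable with respect to the $\sigma$-algebra one conditions on, after which the argument proceeds in close parallel to Lemma~\ref{lem:aux_rev}.
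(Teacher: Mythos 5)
Your proposal follows the paper's argument closely: the paper too reduces $(\tilde Q^Z_\ell)^c$ to the event that an independent vertex set of size $\ell+1$ appears inside the cylinder, conditions to decouple this from the pivotality event, reuses the estimate $1-\prod_{i\ge1}(1-q^{\ell+i})\le\tfrac12$, and iterates across the disjoint cylinders exactly as you sketch. You also correctly single out the role of $\tilde{\mathfrak Q}$ versus $\mathfrak Q$ (the component of $(\bar 0,1)$ is already part of $\C(\bar 0,1)$ and must be discarded for the pivotality argument).

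One point of imprecision, however, needs fixing before the argument runs: you propose to condition on ``the induced partition into components of $\mathcal C(\bar u,m)\setminus\{(\bar u,m)\}$.'' But conditioning on the component partition already determines the event $\tilde Q^Z_\ell(\bar u,m)$ exactly (recall that $\max_{Q\in\tilde{\mathfrak Q}}|Q\cap Z|$ equals the number of components in $\tilde{\mathfrak C}$ that intersect $Z$), so there is no residual randomness left to which the independent-set bound could apply --- the conditional ``failure probability'' collapses to a deterministic $0$ or $1$ and the chain of inequalities from Lemma~\ref{lem:aux_rev} cannot be invoked. The paper avoids this by conditioning only on the \emph{vertex set} $\mathcal C_0\cap Z$ (where $\mathcal C_0$ is the union of the components of $\mathcal C(\bar u,m)\setminus\{(\bar u,m)\}$ not containing $(\bar 0,1)$), leaving the internal edge structure among those vertices in $Z$ --- and hence the independent-set event --- genuinely random. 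Replacing your conditioning by the paper's $\sigma(\mathcal C_0\cap Z)$ is the fix; the rest of your sketch, including the disjointness argument for~\eqref{est:aux_inf2}, then goes through as in the paper.
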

 \begin{proof}
The proof is analogous to the proof of Lemma~\ref{lem:aux_rev}: Consider a cylinder set $Z$ as in the hypotheses of the lemma. Let $\mathcal C_0$ denote the vertex set of the connected components $\mathcal C(\Bar{u},m)\setminus \{(\Bar{u},m)\}$ that do not contain $(\Bar{0},1)$. Then, conditioned on $\mathcal C_0\cap Z$, the events $A^k(\Bar{u},m)$ and $\tilde Q^Z_\ell(\Bar{u},m)$ are stochastically independent with respect to $P$, for every $k\in\N$. Moreover, we can estimate the probability of $(\tilde Q^Z_\ell(\Bar{u},m))^c$ from above by the probability of $\mathcal C_0\cap Z$ containing an independent vertex set of size at least $\ell+1$. Conditioned on $\vert \mathcal C_0\cap Z\vert=\ell+j$ for any $j\in\N$,  the latter probability is bounded from above by $1-\prod_{i=1}^j\left(1-q^{\ell+i}\right)$. Altogether, we get, as in the proof of Lemma~\ref{lem:aux_rev}, 
\begin{align*}
&
P\big(A^k(\Bar{u},m)\cap (\tilde Q^Z_\ell(\Bar{u},m))^c\big)  
\\\leq&\left((1-\prod_{i=1}^\infty\left(1-q^{\ell+i}\right)\right)P(A^k(\Bar{u},m)).
\end{align*}
and, by our choice of $\ell$, we have $$\left((1-\prod_{i=1}^\infty\left(1-q^{\ell+i}\right)\right)\leq \frac 1 2$$ which concludes the proof for the estimate~\eqref{est:aux_inf1}. Given a partition $Z_1,\ldots, Z_M$ as in the hypothesis of the lemma, we use the fact that the cylinders $Z_1, \ldots,Z_k$ are disjoint (and, conditioned on $\mathcal C_0$, the events $\tilde Q^{Z_1}_\ell(\Bar{u},m), \ldots,\tilde Q^{Z_M}_\ell(\Bar{u},m)$ are therefore stochastically independent) to successively re-iterate the above argument $M$ times --- obtaining the estimate~\eqref{est:aux_inf2}.
\end{proof}

Left to show is the analogue of Theorem~\ref{thm:ta_rev}.

\begin{theorem}\label{thm:ta_inf}
There exists a constant $C(\lambda)\geq 1$ such that the min-reach RCM satisfies Assumption~\ref{mainass_b}{b)} for the choice
\begin{align*}
  \mathfrak r_\lambda(m)=C(\lambda)^{R(m)^d},
\end{align*}
in particular, $\lambda\mapsto C(\lambda)$ is non-increasing on $(0,\infty)$.
\end{theorem}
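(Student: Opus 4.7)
The strategy is to mirror the proof of Theorem~\ref{thm:ta_rev} step by step, replacing $G(\Bar u,m)$ by $A^k(\Bar u,m)$, Lemma~\ref{lem:aux_rev} by Lemma~\ref{lem:aux_inf}, and $\mathfrak Q(\Bar u,m)$ by $\tilde{\mathfrak Q}(\Bar u,m)$. Concretely, for every $\Bar u\in U_n^L$ I would establish the pointwise lower bound
\[
P\bigl(A^k(\Bar u,1)\bigr)\;\gtrsim\; C(\lambda)^{-R(m)^d}\,P\bigl(A^k(\Bar u,m)\bigr);
\]
summing over $\Bar u$ then yields Assumption~\ref{mainass_b}~b) with $\mathfrak r_\lambda(m)=C(\lambda)^{R(m)^d}$, $\lambda\mapsto C(\lambda)$ inheriting monotonicity from the estimates below.

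I would reuse the partition $Z_1,\ldots,Z_M$ of $Z_{R(m)}(\Bar u)$ from the proof of Theorem~\ref{thm:ta_rev}, with $M\le C_2R(m)^d$ and cylinders of diameter at most $\underline R$. Lemma~\ref{lem:aux_inf} then allows me to restrict to the event
\[
A^k(\Bar u,m)\cap\bigcap_{q=1}^M\tilde Q^{Z_q}_{\hat\ell}(\Bar u,m)
\]
at a cost of a factor $2^M$, so that each cylinder is intersected by at most $\hat\ell$ components of $\mathcal C(\Bar u,m)\setminus\{(\Bar u,m)\}$ not containing $(\Bar 0,1)$. Mimicking the event $\Theta$ from the proof of Theorem~\ref{thm:ta_rev}, I then introduce an event $\Theta'$ prescribing: for a fixed rule selecting $Q\in\tilde{\mathfrak Q}(\Bar u,m)\cap\mathcal N(\Bar u,m)$, an open relay vertex $v_q\in Z_q$ for every $q$ (with $v_1\in Z_{R(1)}(\Bar u)$); open edges in the chain $v_1$-$v_2$-$\cdots$-$v_M$; open edges from each $v_q$ to every vertex of $Q\cap Z_q$; and that $(\Bar u,1)$ itself be closed. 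Conditional on $Q$ and on the $\tilde Q$-events, these additional requirements involve fresh coordinates and are increasing, so FKG gives $P(\Theta'\mid\cdot)\gtrsim c(\lambda)^M$ with $\lambda\mapsto c(\lambda)$ non-decreasing, exactly as in Theorem~\ref{thm:ta_rev}.

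It then remains to verify that on the intersected event $(\Bar u,1)$ is closed and pivotal for $\{|\mathcal C(\Bar 0,1)|\ge k\}$. The backbone $\{v_q\}_q\cup Q$ is joined into a single cluster $\tilde C$ which, by the choice $Q\in\tilde{\mathfrak Q}$, is disjoint from $C_0:=\mathcal C(\Bar 0,1)$. Hence with $(\Bar u,1)$ closed one has $|\mathcal C(\Bar 0,1)|=|C_0|<k$, whereas opening $(\Bar u,1)$ links it through $v_1$ to $\tilde C$ and, provided $C_0$ already has a vertex in $\mathcal N(\Bar u,1)=Z_{R(1)}(\Bar u)$, also to $C_0$, producing a merged cluster of size $|C_0|+|\tilde C|+1\ge|C_0|+\sum_i|C_i|+1\ge k$ by the original $(\Bar u,m)$-pivotality.

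The main obstacle lies in exactly this last proviso: $C_0$ must reach $Z_{R(1)}(\Bar u)$. When it already does, the argument goes through as stated. Otherwise one must enlarge $\Theta'$ to include a short disjoint chain of open relays extending $C_0$ from its nearest vertex in $Z_{R(m)}(\Bar u)$ into $Z_{R(1)}(\Bar u)$; such a chain involves at most $O(R(m))$ extra coordinates, absorbable into the exponent $R(m)^d$ by the same FKG book-keeping, but it inflates $|C_0|$ and must be handled so as not to push it past the threshold $k$. This can be arranged by splitting the sum $\sum_{\Bar u}\mathscr P^k(\Bar u,m)$ according to whether $|C_0|$ is within $O(R(m))$ of $k$: in that complementary regime the non-$C_0$ mergers contributed by $(\Bar u,m)$'s pivotality are necessarily of bounded size, so a cruder direct comparison to $\mathscr P^{k'}(\Bar u,1)$ for some $k'$ close to $k$ closes the argument.
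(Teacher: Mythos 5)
Your proposal attempts to mirror the revealment argument from Theorem~\ref{thm:ta_rev}, but the paper explicitly flags that this route fails here: pivotality is not an increasing event, so one cannot simply intersect $A^k(\Bar u,m)$ with a ``backbone'' event $\Theta'$ and invoke FKG as in the revealment case. The decisive gap is your claim that the backbone cluster $\tilde C$ is ``disjoint from $C_0$ by the choice $Q\in\tilde{\mathfrak Q}$.'' Choosing $Q$ from components not containing $(\Bar 0,1)$ only ensures the \emph{designated} target vertices avoid $C_0$; it does not prevent the \emph{relay} vertices $v_q$, once forced open, from connecting to $C_0$ through already-sampled edges elsewhere in the configuration. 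In that case, opening the backbone merges $C_0$ with one or more $C_i$ without ever touching $(\Bar u,1)$, giving $|\mathcal C(\Bar 0,1)|\ge k$ with $(\Bar u,1)$ still closed --- so $(\Bar u,1)$ is not pivotal and the inclusion $A^k(\Bar u,m)\cap\tilde Q_\ell\cap\Theta'\subset A^k(\Bar u,1)$ is false. Your discussion of the ``main obstacle'' (reaching $Z_{R(1)}(\Bar u)$) is a secondary, more tractable issue; the one above is the one that breaks the FKG strategy outright.

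The paper's actual proof handles this by an iterative, non-monotone case distinction. For each cylinder $Z_q$ one splits on $\Gamma_q$ (the weight-$1$ vertex at a fixed position $\Bar u_q$ \emph{would already be pivotal} if wired to $\hat Q\cap Z_q$) versus its complement. On $\Gamma_q$ one compares directly to $P(A^k(\Bar u_q,1))$; on $\Gamma_q^c$ one safely opens $(\Bar u_q,1)$ and wires it, because non-pivotality there, together with $A^k(\Bar u,m)$, guarantees $|\mathcal C(\Bar 0,1)|$ stays below $k$ and the pivotality of $(\Bar u,m)$ is preserved. Iterating through all $M$ cylinders and observing that $P(A^k(\Bar u,m)\cap\overline\Theta_{M-1}\cap\Gamma_M^c)=0$ closes the telescoping bound; the result is not a pointwise comparison of $P(A^k(\Bar u,\cdot))$ but a sum comparison after summing over $\Bar u$, with the extra factor $M$ appearing from the shifted positions $\Bar u_q$. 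Your proposal omits this mechanism entirely and so does not go through.
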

\begin{proof}
Let us start by giving a brief informal description of the  proof idea: Given that $(\Bar{u},m)$ is a pivotal vertex, any closed vertex that connects to every connected component of $\mathcal C(\Bar{u},m)\setminus\{(\Bar{u},m)\}$ is itself pivotal and we can control the number of those connected components uniformly in $k$ by Lemma~\ref{lem:aux_inf}. Some difficulty, however, stems from the fact that for any vertex of weight $1$ some connected components of $\mathcal C(\Bar{u},m)\setminus\{(\Bar{u},m)\}$ might be out of reach to connect by a direct edge due to the nature of our model (as its name suggests). Therefore, one needs to be careful in order to build paths of open vertices without destroying the pivotality of $(\Bar{u},m)$ (since pivotality is not an increasing event, we cannot use an FKG-type inequality like in Theorem~\ref{thm:ta_rev}).~\\ 
    
    Let $Z_1,\ldots, Z_M$ be a partition of $Z_{R(m)}(\Bar{u})$ into cylinder sets as in the proof of Theorem~\ref{thm:ta_rev}. We need to introduce some additional notation. First of all, we want to choose a $Q\in\tilde{\mathfrak Q}(\Bar{u},m)\cap \mathcal N(\Bar{u},m)$ by a deterministic rule and refer to this random configuration of vertices by $\hat Q$. Then, we set for some fixed $\ell\geq \hat\ell$ given as in Definition~\ref{def:l_hat}, 
    \begin{align*}
        \tilde Q_\ell(\Bar{u},m):=\bigcap_{1\leq q\leq M} \tilde Q^{Z_{q}}_\ell(\Bar{u},m)\subset \bigcap_{1\leq q\leq M}\{\vert \hat Q\cap Z_q\vert\leq \ell\}.
    \end{align*}
   
    Next, for every $1\leq q\leq M$, we fix a position $\Bar{u}_q$ from $Z_q$ by a deterministic rule and define  
\begin{align*}
  \widetilde{\Gamma}_q=\{ (\Bar{u}_q,1) \text{ would be pivotal if adjacent to all of } \hat Q\cap Z_q\}  
\end{align*}
and
\begin{align*}
   \Gamma_q=  \widetilde{\Gamma}_q\cap \{ (\Bar{u}_q,1) \text{ is closed}\} .
\end{align*}

Furthermore, we define
\begin{align*}
    \Theta_q=\{(\Bar{u}_q,1) \text{ is open and adjacent to all of } \hat Q\cap Z_q\}
\end{align*}
and, for $1\leq q\leq M-1$, introduce the events
$$
\overline{\Theta}_q=\{\Theta_1\cap\ldots\cap \Theta_q\text{ occurs and the path } (\Bar{u}_1,1),\ldots, (\Bar{u}_{q+1},1) \text{ is open}\}.
$$

We want to employ an iterative estimate based on the following observation: Without loss of generality, assume that $\tilde Q\cap Z_1\neq \varnothing$. In a first step, we notice that either $(\Bar{u}_1,1)$ would be pivotal if one connected it to every vertex in $\hat Q\cap Z_1$ by an open edge or we can open $(\Bar{u}_1,1)$ and connect it to every vertex in $\hat Q\cap Z_1$ without losing the pivotality of $(\Bar{u},m)$.
So, we decompose
 \begin{align*}
& P\left(A^k(\Bar{u},m)\cap \tilde Q_\ell(\Bar{u},m)\right)
  \\=&P\left(A^k(\Bar{u},m)\cap \tilde Q_\ell(\Bar{u},m)\cap \Gamma_1\right)+P\left(A^k(\Bar{u},m)\cap \tilde Q_\ell(\Bar{u},m)\cap \Gamma_1^c\right)
 \end{align*}
 and notice that
\begin{align*}
P\left(A^k(\Bar{u},m)\cap \tilde Q_\ell(\Bar{u},m)\cap \Gamma_1\right)\lesssim \varphi(\underline{R},1,1)^{-\ell}P\left(A^k(\Bar{u}_1,1)\right)
\end{align*}
 as well as
\begin{align*}
&P\left(A^k(\Bar{u},m)\cap\tilde Q_\ell(\Bar{u},m)\cap \Gamma_1^c\right)
\\\lesssim &  P\left(A^k(\Bar{u},m)\cap\tilde Q_\ell(\Bar{u},m)\cap \Theta_1\right)\varphi(\underline{R},1,1)^{-\ell} P\left( (\Bar{u}_1,1)\text{ is open}\right)^{-1}.
\end{align*}
That last estimate holds by the following consideration: Let $P^Q(\cdot):=P(\cdot\,\vert\, \tilde Q_\ell(\Bar{u},m))$, then
\begin{align*}
P\left(A^k(\Bar{u},m)\cap\tilde Q_\ell(\Bar{u},m)\cap           {\widetilde{\Gamma}_1}^c\cap \Theta_1\right)
=&P^Q\left(A^k(\Bar{u},m)\mid \widetilde{\Gamma}_1^c\right)P^Q\left(\Theta_1\mid \widetilde{\Gamma}_1^c\right)P^Q(\widetilde{\Gamma}_1^c)P\left(\tilde Q_\ell(\Bar{u},m)\right) 
\\=&P\left(A^k(\Bar{u},m)\cap \tilde Q_\ell(\Bar{u},m)\cap \widetilde{\Gamma}_1^c\right)P^Q\left(\Theta_1\right),
\end{align*}
where
\begin{itemize}
    \item for the first equality, we used that $A^k(\Bar{u},m)$ and $\Theta_1$ are independent conditioned on $\widetilde{\Gamma}_1^c$ with respect to $P^Q$,
    \item for the second equality, we used that $\Theta_1$ is independent of $\widetilde{\Gamma}_1^c$ with respect to $P^Q$.
\end{itemize}
So, using the notation $\mathfrak p=1-\e^{-\lambda 2^{-nd}\Pi(1,n)} =P\left( (\Bar{u},1)\text{ is open}\right)$ and noticing that the estimate $P^Q(\Theta_1)\leq \varphi(\underline{R},1,1)^{\ell+1}\mathfrak p$ holds, we get
\begin{align*}
 P\left(A^k(\Bar{u},m) \cap  \tilde Q_\ell(\Bar{u},m)\right)
 \lesssim\, &P\left(A^k(\Bar{u}_1,1)\right)\varphi(\underline{R},1,1)^{-\ell} 
 \\&+P\left(A^k(\Bar{u},m)\cap\tilde Q_\ell(\Bar{u},m)\cap \overline{\Theta}_1\cap \Gamma_2\right)\varphi(\underline{R},1,1)^{-\ell-1} \mathfrak p^{-1}
 \\&+P\left(A^k(\Bar{u},m)\cap\tilde Q_\ell(\Bar{u},m)\cap \overline{\Theta}_1\cap \Gamma_2^c\right)\varphi(\underline{R},1,1)^{-\ell-1} \mathfrak p^{-1}.
 \end{align*}

In a second step, we consider the relevant case where $(\Bar{u}_1,1)$ is open, connects to every vertex in $\hat Q\cap Z_1$ by an open edge and is not pivotal. Now, we can again make the analogous case distinction: Either $(\Bar{u}_2,1)$ would be pivotal if one connected it to every vertex in $\hat Q\cap Z_2$ and to $(\Bar{u}_1,1)$ by an open edge, or we can open $(\Bar{u}_2,1)$ and connect it to every vertex in $\hat Q\cap Z_1$ as well as to $(\Bar{u}_1,1)$ without losing the pivotality of $(\Bar{u},m)$. Based on that consideration, we can estimate the probability
\begin{align*}
    P\left(A^k(\Bar{u},m)\cap\tilde Q_\ell(\Bar{u},m)\cap \overline{\Theta}_1\cap \Gamma_2^c\right)
\end{align*}
as above.~\\\\ If we re-iterate this case distinction and the corresponding estimate for $u_2,\ldots,u_{M-1}$, we obtain --- using that $P\left(A^k(\Bar{u},m)\right)\lesssim P\left(A^k(\Bar{u},m)\cap \tilde Q_\ell(\Bar{u},m)\right)$ holds by Lemma~\ref{lem:aux_inf} ---
\begin{align*}
P\left(A^k(\Bar{u},m)\right)\lesssim&P\left(A^k(\Bar{u}_1,1)\right)\varphi(\underline{R},1,1)^{-\ell} \\&+ \sum_{q=1}^{M-1} P\left(A^k(\Bar{u},m)\cap\overline{\Theta}_q \cap \Gamma_{q+1}\right){\varphi(\underline{R},1,1)^{-(\ell+1) q}\mathfrak p^{-q}}
\\&+ P\left(A^k(\Bar{u},m)\cap\overline\Theta_{M-1} \cap\Gamma_M^c\right){\varphi(\underline{R},1,1)^{-(\ell+1)(M-1)}\mathfrak p^{-(M-1)}}
\\\leq&P\left(A^k(\Bar{u}_1,1)\right)\varphi(\underline{R},1,1)^{-\ell} \\&+ \sum_{q=1}^{M-1} P\left(A^k(\Bar{u}_q,1)\right){\varphi(\underline{R},1,1)^{-(\ell+1) q-\ell}\mathfrak p^{-q}}
\\&+ P\left(A^k(\Bar{u},m)\cap\overline\Theta_{M-1} \cap\Gamma_M^c\right){\varphi(\underline{R},1,1)^{-(\ell+1)(M-1)}\mathfrak p^{-(M-1)}}.
\end{align*}
Now, it suffices to observe that
\begin{align}\label{id:t_piv_M}
P\left(A^k(\Bar{u},m)\cap\overline\Theta_{M-1} \cap\Gamma_M^c\right)=0
\end{align}
holds --- since we can force the vertex $(\Bar{u}_{M},1)$ in the last cylinder $Z_M$ to become pivotal by attaching it to $\hat Q\cap Z_M$ and to $(\Bar{u}_{M-1},1)$ (the latter vertex is connected to the remaining points of $\hat Q$). Altogether, we get 
\begin{align}
P\left(A^k(\Bar{u},m)\right)\lesssim&\sum_{q=0}^{M-1}P\left(A^k(\Bar{u}_{q+1},1)\right)\left(\varphi(\underline{R},1,1)^{-(\ell+1)}\mathfrak p^{-1}\right)^{q}
\end{align}
and thus
\begin{align}\label{est:}
\sum_{u\in U^L_{n}} P\left(A^k(\Bar{u},m)\right)\lesssim &\,M\left(\varphi(\underline{R},1,1)^{-(\ell+1)}\mathfrak p^{-1}\right)^{M}\sum_{u\in U_n^L} P\left(A^k(\Bar{u},1)\right).
\end{align}
We complete the estimate by recalling that $M\lesssim R(m)^d$ holds by assumption. 

~\\Left to notice is that the constant $\mathfrak p^{-1}=\big(1-\e^{-\lambda 2^{-nd}\Pi(1,n)}\big)^{-1}$ diverges as $n\to\infty$; however, the proof is easily modified to replace $\mathfrak p$ by a constant that does not depend on $n$. The key modification is the following: Instead of considering an a priori fixed set of positions $\Bar{u}_1,\ldots, \Bar{u}_M$, we can choose the positions in every step of the above procedure randomly. Replace the events $\Gamma_q$ by the events that there exists a closed weight-one vertex in $Z_q$ which would be pivotal if adjacent to every vertex in $Z_q\cap \hat Q$. On the complement of this event, there exists a deterministic $\epsilon>0$ such that, with high probability, a sub-cylinder $B_\epsilon\times[1,1+\epsilon]\subset Z_q$ has a positive fraction of closed vertices that would not be pivotal if adjacent to every vertex in $Z_q\cap \hat Q$. Replicating the proof above, we may force exactly one of these vertices open and connect it to $Z_q\cap \hat Q$ without destroying the pivotality of $(\Bar{u},m)$, at a cost asymptotically of order $\lambda\vert B_\epsilon\vert\pi([1,1+\epsilon])$ as $n\to\infty$. Thus $\mathfrak p$ can be replaced by $\lambda\vert B_\epsilon\vert\pi([1,1+\epsilon])$, or, equivalently,  by $\lambda$ in the corresponding estimates.
\end{proof}

\section{Infinite-volume differential inequalities and the proof of Theorem~\ref{thm:Susceptibility Mean-Field Bound}}\label{sect:Inf_vol_lim}
Throughout this section, we will assume that Assumption~\ref{mainass} holds for some families of functions $(\mathfrak r_\lambda)_{\lambda>0}$ and $(\mathfrak R_\lambda)_{\lambda>0}$ as well as that the moment condition
\begin{align}\label{ass:tmc_5}
F(\lambda):=\int_1^\infty  \mathfrak r_{\lambda}(m)\mathfrak R_{\lambda}(m)\pi(\dd m)<\infty   
\end{align}
holds for every $\lambda>0$. Naturally, to show exponential decay in the subcritical regime $\lambda<\lambda_T$, it would be sufficient that the moment condition~\eqref{ass:tmc_5} holds subcritically. Moreover, notice that for any compact interval $[\lambda_1,\lambda_2]$ and $\lambda\in[\lambda_1,\lambda_2]$, we have
\begin{align}\label{est:F_mon}
   1\leq F(\lambda_2)\leq F(\lambda)\leq F(\lambda_1) <\infty.
\end{align}

We now turn to the continuum system and derive the differential inequalities that will ultimately be used in infinite volume.  
Since our main results concern the infinite-volume quantities 
$\theta_\lambda(k)$ and $\E_\lambda[|\C(\bar 0,1)|]$, we must first introduce the
\emph{finite-volume continuum} versions of these objects.  
These will arise as limits of the finite-volume discrete approximations 
constructed before.

The results of this chapter are obtained by combining ideas from the proof of
\cite[Theorem~1.5]{Hut20} with the limiting procedures established in
Appendix~\ref{AIII}.  
Using Corollary~\ref{cor:4main}, we begin by taking the limit $n\to\infty$ in our
discrete approximation in order to obtain the differential inequalities for the
continuum system in a finite box.

\medskip

For $\lambda>0$ and $L,k\in\mathbb{N}$, we work under the infinite-volume measure
$\mathbb{P}_\lambda$, but we define the \emph{finite-volume cluster}
$\C_L(\bar 0,1)$ to be the connected component of $(\bar 0,1)$ in the graph
obtained from $\xi$ by \emph{removing all edges that leave the cylinder} $\Lambda^L := [-L,L]^d \times [1,\infty)$. Equivalently, vertices outside of this box are treated as absent.  
This is the standard finite-volume restriction in continuum percolation
(see, e.g., \cite[Chapter~3]{MeeRoy96}, \cite[Chapter~6]{Pen03}).

We then set
\begin{align}
\theta^L_\lambda(k)
:=
\mathbb{P}_\lambda\big(|\C_L(\bar 0,1)|\ge k\big),
\end{align}
the finite-volume cluster-tail function. In the discrete approximation constructed before, we similarly defined
\begin{align}
\theta^{L,n}_\lambda(k)
:=
{P}^{L,n}_\lambda\big(|\mathcal{C}(\bar 0,1)|\ge k\big).
\end{align}

The continuum limit proved in Lemma~\ref{lem:fla} yields
\begin{align*}   
\theta^L_\lambda(k)
=
\lim_{n\to\infty}\theta^{L,n}_\lambda(k),
\end{align*}
for every fixed $\lambda>0$, $L\in\mathbb{N}$, and $k\in\mathbb{N}$.  
This identity provides the differential equation for the finite-volume continuum
system, which is the starting point for the infinite-volume analysis carried out
below.

\begin{lemma}\label{lem:cfv-diff} Let $0<\lambda<\lambda_T$. For any $L$ sufficiently large and for all $k\in \N$, we have
\begin{align}
c{(\lambda)}\left(\frac{k}{\E_\lambda[|\C_L(\bar 0,1)|]}-1\right) \theta_\lambda^{L}(k)\lesssim  \frac{\dd}{\dd \lambda}\theta^{L}_\lambda(k),
\label{est:cfv-diff}
\end{align}
where the constant $c(\lambda)>0$ is given by
\begin{align}\label{def:un_const_c}
    c(\lambda)=\left(\lambda\int_1^\infty  \mathfrak r_{\lambda}(m)\mathfrak R_\lambda(m) \pi(\dd m\right)^{-1}.
\end{align}
\end{lemma}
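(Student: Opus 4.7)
The plan is to derive~\eqref{est:cfv-diff} by sending $n\to\infty$ in the discrete finite-volume inequality~\eqref{est: 1cor3.10} from Corollary~\ref{cor:3.2} and then dividing by the positive prefactor $\lambda\sum_{m\in M_n}\Pi(m,n)\mathfrak r_\lambda(m)\mathfrak R_\lambda(m)$ multiplying $\frac{\dd}{\dd\lambda}\theta^{L,n}_\lambda(k)$. Four quantities of~\eqref{est: 1cor3.10} must be tracked through the limit. For the tail $\theta^{L,n}_\lambda(k)$ and for the expected cluster size $E^{L,n}_\lambda[\vert\mathcal C(\bar 0,1)\vert]$ I would invoke the continuum-limit result from Appendix~\ref{AIII} (Lemma~\ref{lem:fla}); the subcriticality hypothesis $\lambda<\lambda_T$ guarantees that the limiting expectation is finite. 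The weight prefactor is a Riemann sum that, by the monotonicity of the density of $\pi$ and the moment assumption~\eqref{ass:tmc_5}, converges to $F(\lambda)=\int_1^\infty \mathfrak r_\lambda(m)\mathfrak R_\lambda(m)\,\pi(\dd m)$. Finally, the residual edge-influence term $\sup_{\tilde\gamma>0}\delta_{\tilde\gamma}(\bar 0,1)^{-1}\sum_{e\in E_n^L}\delta_{\tilde\gamma}(e)\,\textsc{Inf}^P_e(\mathds 1_{\{\vert\mathcal C(\bar 0,1)\vert\geq k\}})$ is shown in Appendix~\ref{AIII} to vanish as $n\to\infty$; this is the structural payoff of having designed the decision forest in Section~\ref{sect:2} so that any revealed edge forces the revealment of one of its endpoints.

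The main obstacle is that mere pointwise convergence $\theta^{L,n}_\lambda(k)\to\theta^L_\lambda(k)$ does not transfer directly to the derivative $\frac{\dd}{\dd\lambda}\theta^{L,n}_\lambda(k)$. I would circumvent this in the standard way: fix an arbitrary compact sub-interval $[\lambda_1,\lambda_2]\subset(0,\lambda_T)$, divide~\eqref{est: 1cor3.10} by the positive prefactor, and integrate both sides in $\lambda$ over $[\lambda_1,\lambda_2]$. The right-hand side telescopes to $\theta^{L,n}_{\lambda_2}(k)-\theta^{L,n}_{\lambda_1}(k)$, which converges to $\theta^{L}_{\lambda_2}(k)-\theta^{L}_{\lambda_1}(k)$; the left-hand side is handled by dominated convergence, using the monotonicity~\eqref{est:F_mon} of $F$ together with the uniform bound $\theta^{L,n}_\lambda(k)\le 1$ and the subcritical control $\E_\lambda[\vert\C_L(\bar 0,1)\vert]\le \E_{\lambda_2}[\vert\C(\bar 0,1)\vert]<\infty$ over $[\lambda_1,\lambda_2]$. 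This produces an integrated version of~\eqref{est:cfv-diff} valid on every such sub-interval.

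Since $\theta^{L}_\lambda(k)$ is non-decreasing in $\lambda$ and hence Lebesgue-differentiable almost everywhere, applying Lebesgue differentiation to this integrated inequality yields~\eqref{est:cfv-diff} at almost every $\lambda<\lambda_T$, which is exactly what the subsequent sharpness arguments require. The restriction to sufficiently large $L$ is inherited directly from Assumption~\ref{mainass}, which is stipulated only beyond a threshold. A minor point worth flagging is that $\frac{k}{\E_\lambda[\vert\C_L(\bar 0,1)\vert]}-1$ may be negative for small $k$, in which case~\eqref{est:cfv-diff} is trivial because its right-hand side is non-negative by monotonicity of $\theta^L_\lambda(k)$ in $\lambda$; no special care is therefore needed for this regime.
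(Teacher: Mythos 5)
Your overall route coincides with the paper's: pass to the continuum limit $n\to\infty$ in the discrete differential inequality~\eqref{est: 1cor3.10}, identify the Riemann-sum prefactor with $F(\lambda)$, and use Lemma~\ref{lem:app1} to kill the edge-influence term. Where you diverge is in how you handle the derivative on the right-hand side. You perceive a genuine obstruction there and route around it by integrating in $\lambda$ and then invoking Lebesgue differentiation, obtaining~\eqref{est:cfv-diff} only for almost every $\lambda<\lambda_T$. But Lemma~\ref{lem:fla} in Appendix~\ref{AIII} contains three parts, and part~(2) states precisely that $\frac{\dd}{\dd\lambda}\theta_\lambda^{L,n}(k)\to\frac{\dd}{\dd\lambda}\theta_\lambda^{L}(k)$ as $n\to\infty$; its proof is a separate Russo-formula argument combined with a cluster-expansion comparison, not a consequence of the pointwise convergence in part~(1). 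You cite Lemma~\ref{lem:fla} only for the tail (part~(1)) and the expectation (part~(3)) but miss that the derivative convergence is also supplied there. With part~(2) in hand the limit in the differential inequality is taken directly and the conclusion holds at every $\lambda\in(0,\lambda_T)$, not just almost everywhere. Your detour is therefore a valid but strictly weaker alternative that rediscovers a softer substitute for a tool the paper already provides. It is fair to say, as you do, that the a.e.\ version would still feed into the integration in Lemma~\ref{lem:4fcont}; nonetheless the statement as written asserts the pointwise inequality, and the intended justification is Lemma~\ref{lem:fla}(2), not Lebesgue differentiation. Your remark about the trivial regime where $k<\E_\lambda[|\C_L(\bar 0,1)|]$ is correct and harmless.
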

\begin{proof}
The claim follows from the first estimate~\eqref{est: 1cor3.10} claimed in Corollary~\ref{cor:3.2} by taking the limit $n\to\infty$ and using Lemma~\ref{lem:fla}. We have 
\begin{align*}
\sum_{m\in M_n} \Pi(m,n) \mathfrak r_\lambda(m) \mathfrak R_\lambda(m)\to\int_1^\infty  \mathfrak r_{\lambda}(m)\mathfrak R_{\lambda}(m) \pi(\dd m)<\infty,\quad n\to\infty,
\end{align*}
since the sums on the left side are precisely the Riemann sums corresponding to the integral on the right side. 

The remainder of the proof of the theorem is essentially given by Lemma~\ref{lem:app1} from Appendix~\ref{AIII} since the contribution of the edge influences on the right side of~\eqref{est: 1cor3.10} vanishes in the limit $n\to\infty$.
\end{proof}

Instead of taking the infinite-volume limit $L\to\infty$ now, we first derive an exponential bound for $\theta^L_\lambda$ in finite volume:
\begin{lemma}\label{lem:4fcont} Let $0<\lambda<\lambda_0<\lambda_T$. Then, for $L$ sufficiently large, there exist constants $c(\lambda,\lambda_0)>0$ and $C(\lambda,\lambda_0)>0$ such that, for all $k\in\N$, we have
\begin{align*}
{\theta^L_\lambda(k)}\lesssim C(\lambda,\lambda_0)\exp\left\{-\frac{c(\lambda,\lambda_0)}{\E_{\lambda_0}[|\C_L(\bar 0,1)|]}\times k\right\}.
\end{align*}
\end{lemma}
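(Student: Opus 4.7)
The plan is to read Lemma~\ref{lem:cfv-diff} as a logarithmic differential inequality for the map $s \mapsto \theta^L_s(k)$ on $[\lambda,\lambda_0]$ and integrate it up to $\lambda_0$, using the trivial bound $\theta^L_{\lambda_0}(k) \le 1$ to close the estimate. This is the same Gronwall-type mechanism used in~\cite{Hut20,Hut22}, adapted here to the continuum finite-volume setting obtained in Section~\ref{sect:Inf_vol_lim}.

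Concretely, abbreviating $\chi^L_s := \E_s[|\C_L(\bar 0,1)|]$ and dividing the inequality~\eqref{est:cfv-diff} by the positive quantity $\theta^L_s(k)$, I arrive at
\begin{align*}
\frac{d}{ds}\log \theta^L_s(k) \;\gtrsim\; c(s)\left(\frac{k}{\chi^L_s}-1\right), \qquad s\in[\lambda,\lambda_0].
\end{align*}
Integrating from $\lambda$ to $\lambda_0$ and using $\log \theta^L_{\lambda_0}(k)\le 0$ yields
\begin{align*}
\log \theta^L_\lambda(k) \;\lesssim\; -\int_\lambda^{\lambda_0} c(s)\left(\frac{k}{\chi^L_s}-1\right) ds.
\end{align*}

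The next step is to remove the $s$-dependence from the integrand so that only $\chi^L_{\lambda_0}$ survives in the exponent. For the denominator I invoke the monotonicity of the susceptibility in $\lambda$ (inherited from the stochastic monotonicity of the discrete models $P^{L,n}_\lambda$ via the continuum limit of Appendix~\ref{AIII}), which gives $\chi^L_s \le \chi^L_{\lambda_0}$ for every $s \le \lambda_0$. For the prefactor, the moment assumption~\eqref{ass:tmc_5} together with the monotonicity recalled in~\eqref{est:F_mon} makes $c(s) = (s F(s))^{-1}$ continuous and sandwiched between two strictly positive constants on $[\lambda,\lambda_0]$, e.g.\ $c(s)\ge (\lambda_0 F(\lambda))^{-1}$. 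Splitting the integral into its $k$-linear and $k$-independent parts and applying these bounds, I obtain
\begin{align*}
\int_\lambda^{\lambda_0} c(s)\left(\frac{k}{\chi^L_s}-1\right)ds \;\ge\; \frac{c_1\, k}{\chi^L_{\lambda_0}} \;-\; c_2
\end{align*}
for strictly positive constants $c_1,c_2 > 0$ depending only on $\lambda$ and $\lambda_0$.

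Exponentiating then delivers the desired estimate with $c(\lambda,\lambda_0)$ proportional to $c_1$ and $C(\lambda,\lambda_0) = e^{c_2}$. I do not expect a deep obstacle here, since the substantive analytic work was already invested in Lemma~\ref{lem:cfv-diff}. The one point that requires care is verifying that the implicit multiplicative constant hidden in the $\lesssim$ of~\eqref{est:cfv-diff} is uniform in both $L$ and $k$ --- which it is, because that inequality was obtained from Corollary~\ref{cor:4main} by sending $n\to\infty$ with constants independent of $L$ and $k$ --- so that the resulting $c(\lambda,\lambda_0)$ and $C(\lambda,\lambda_0)$ do not pick up any hidden dependence on these parameters.
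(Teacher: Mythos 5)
Your proposal is correct and takes essentially the same route as the paper's proof: both rewrite~\eqref{est:cfv-diff} as a logarithmic differential inequality, integrate over $[\lambda,\lambda_0]$, pass from $\E_s[|\C_L(\bar 0,1)|]$ to $\E_{\lambda_0}[|\C_L(\bar 0,1)|]$ by monotonicity of the susceptibility, lower-bound $\int_\lambda^{\lambda_0} c(s)\,ds$ via~\eqref{est:F_mon} as in~\eqref{est:lb_intc}, and exponentiate using $\theta^L_{\lambda_0}(k)\le 1$. The only cosmetic difference is that the paper exponentiates the ratio $\theta^L_{\lambda_0}(k)/\theta^L_\lambda(k)$ rather than invoking $\log\theta^L_{\lambda_0}(k)\le 0$ before exponentiating, which amounts to the same manipulation.
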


\begin{proof}
Fix $\lambda_0<\lambda_T$ and $\lambda<\lambda_0$. Let $L$ be sufficiently large, then the inequality~$\eqref{est:cfv-diff}$ --- which holds by Lemma~\ref{lem:cfv-diff} --- is equivalent to 
\begin{align}\label{est:cfv-d2}
\frac{\dd}{\dd \lambda}\log\theta^{L}_\lambda(k)\gtrsim c(\lambda)\left(\frac{k}{\E_\lambda[|\C_L(\bar 0,1)|]}-1\right)
.
\end{align}
Integrating inequality~\eqref{est:cfv-d2} over $[\lambda, \lambda_0]$, exploiting the obvious monotonicity $E^L_\lambda[\vert \C(\Bar{0},1)\vert]\leq \E_{\lambda_0}[|\C_L(\bar 0,1)|]$ and using
\begin{align}\label{est:lb_intc}
\int_{\lambda}^{\lambda_0}c(\lambda)\dd \lambda    \geq \frac{\lambda_0-\lambda}{\lambda_0}\frac{1}{F(\lambda)}=:c(\lambda,\lambda_0)>0,
\end{align}
we obtain
\begin{align}\label{est:cfv-d3}
    \log\theta^L_{\lambda_0}(k)-\log\theta^L_\lambda(k)\gtrsim  c(\lambda,\lambda_0)\left(\frac{k}{\E_{\lambda_0}[|\C_L(\bar 0,1)|]}-1\right)\dd\lambda.
\end{align}
Equivalently, the estimate~\eqref{est:cfv-d3} can be formulated as 
$$
\log\left(\frac{\theta^L_{\lambda_0}(k)}{\theta^L_\lambda(k)}\right)\gtrsim  c(\lambda,\lambda_0)\left(\frac{k}{\E_{\lambda_0}[|\C_L(\bar 0,1)|]}-1\right)
$$
and taking the exponential on both sides yields the claim of Lemma~\ref{lem:4fcont} with the choice $C(\lambda,\lambda_0)=\e^{c(\lambda,\lambda_0)}$.
\end{proof}

Now, we are ready to get our main result in the subcritical regime --- by taking the infinite-volume limit $L\to\infty$:
\begin{proposition}\label{prop:UB_sus}
 Let $0<\lambda<\lambda_0<\lambda_T$. There exist constants $c(\lambda, \lambda_0)>0$ and $C(\lambda,\lambda_0)>0$ such that for all $k\in\N$ we have
\begin{align}
\label{cluster_tails_ratio}
{\theta_\lambda(k)}\lesssim  C(\lambda,\lambda_0)\exp\left\{-\frac{c(\lambda, \lambda_0)}{\E_{\lambda_0}[\vert \C(\Bar{0},1)\vert]}\times k\right\}.
\end{align}
\end{proposition}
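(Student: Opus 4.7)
The plan is to pass to the infinite-volume limit $L\to\infty$ in the finite-volume exponential bound of Lemma~\ref{lem:4fcont}. Fix $0<\lambda<\lambda_0<\lambda_T$. For all $L$ sufficiently large and every $k\in\N$, Lemma~\ref{lem:4fcont} yields
\[
\theta^L_\lambda(k)\;\lesssim\;C(\lambda,\lambda_0)\exp\!\left\{-\frac{c(\lambda,\lambda_0)}{\E_{\lambda_0}[|\C_L(\bar 0,1)|]}\times k\right\},
\]
where crucially neither the multiplicative constant hidden in $\lesssim$ nor the quantities $c(\lambda,\lambda_0)$ and $C(\lambda,\lambda_0)$ depend on $L$, in accordance with the convention on $\lesssim$ fixed at the beginning of Section~\ref{sect:3}.

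The argument then reduces to two monotone-convergence statements as $L\to\infty$. By construction of the finite-volume cluster (all edges leaving $\Lambda^L$ are deleted), we have $\C_L(\bar 0,1)\subseteq \C_{L'}(\bar 0,1)\subseteq \C(\bar 0,1)$ whenever $L\leq L'$, and any vertex in $\C(\bar 0,1)$ is joined to $(\bar 0,1)$ by a path visiting only finitely many vertices of $\eta$, each located at a bounded distance from the origin. Hence $|\C_L(\bar 0,1)|\uparrow|\C(\bar 0,1)|$ almost surely, and the monotone convergence theorem delivers both
\[
\theta^L_\lambda(k)\;\uparrow\;\theta_\lambda(k)\qquad\text{and}\qquad \E_{\lambda_0}[|\C_L(\bar 0,1)|]\;\uparrow\;\chi_{\lambda_0}.
\]
The finiteness $\chi_{\lambda_0}<\infty$ is guaranteed by the standing assumption $\lambda_0<\lambda_T$, which is exactly the reason for introducing the auxiliary parameter $\lambda_0$ strictly between $\lambda$ and $\lambda_T$.

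To conclude, observe that since $L\mapsto\E_{\lambda_0}[|\C_L(\bar 0,1)|]$ is non-decreasing, the right-hand side of the finite-volume bound is itself non-decreasing in $L$ and converges to $C(\lambda,\lambda_0)\exp\{-c(\lambda,\lambda_0)\,k/\chi_{\lambda_0}\}$ as $L\to\infty$. Passing to the limit on both sides of the displayed inequality — legitimate thanks to the uniformity of the hidden constants in $L$ — yields precisely~\eqref{cluster_tails_ratio} with $\chi_{\lambda_0}=\E_{\lambda_0}[|\C(\bar 0,1)|]$ in the denominator.

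I expect the only point warranting attention — and really the only non-routine step — to be the almost-sure monotone convergence $|\C_L(\bar 0,1)|\uparrow|\C(\bar 0,1)|$. It relies on the almost-sure local finiteness of the marked Poisson process $\eta$ together with the fact that clusters are defined through paths of finite length, so that no individual path can escape every cylinder $\Lambda^L$. Neither the potentially infinite range of $\varphi$ nor the unboundedness of the weights obstructs this step, since both phenomena are relevant only for the \emph{distribution} of cluster sizes, not for the pathwise set-inclusion used here.
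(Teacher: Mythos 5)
Your proof is correct and follows the same overall strategy as the paper's: fix $\lambda_0\in(\lambda,\lambda_T)$, apply the finite-volume exponential bound from Lemma~\ref{lem:4fcont}, and let $L\to\infty$. The paper dispatches the two convergence statements by citing Lemmas~\ref{lem:converges_theta_inf_vol} and~\ref{lem:converges_expected_cluster_inf_vol} from the appendix, the first of which in turn outsources the argument to an external reference (Lemma~3.5 of~\cite{CD24}). You instead give a self-contained monotone-convergence argument: since $\C_L(\bar 0,1)$ is increasing in $L$ (each step only adds edges) and every vertex of $\C(\bar 0,1)$ lies on a finite path at bounded distance from the origin, $|\C_L(\bar 0,1)|\uparrow|\C(\bar 0,1)|$ almost surely, and both $\theta^L_\lambda(k)\uparrow\theta_\lambda(k)$ (continuity of measure along increasing events) and $\E_{\lambda_0}[|\C_L(\bar 0,1)|]\uparrow\E_{\lambda_0}[|\C(\bar 0,1)|]$ (monotone convergence) follow. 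This is arguably cleaner and more elementary than what the paper does, and it avoids the detour through~\cite{CD24}; the paper's more elaborate route pays off, however, in the derivative convergence~\eqref{convergence_derivative_cluster_probability_inf_vol} of Lemma~\ref{lem:converges_theta_inf_vol}, which genuinely requires assumption~\textup{(NB)} and is not a consequence of pointwise monotonicity alone (though that statement is not needed for the present proposition). Your observation that the hidden constant in $\lesssim$ and the constants $c(\lambda,\lambda_0),C(\lambda,\lambda_0)$ are $L$-independent is correct and is the technical hinge that makes the limit passage immediate.
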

\begin{proof}
Given Lemma~\ref{lem:converges_theta_inf_vol} and Lemma~\ref{lem:converges_expected_cluster_inf_vol}, the claim follows immediately from the inequality in Lemma~\ref{lem:4fcont} by taking the infinite-volume limit $L\to\infty$. Notice that $\E_{\lambda_0}[\vert \C(\Bar{0},1)\vert]<\infty$ holds since $\lambda_0<\lambda_T$.
\end{proof}

In the subcritical case, we obtained the desired result as a continuum and infinite-volume limit of~\eqref{est: 1cor3.10} under suitable moment conditions --- but it does not provide a meaningful estimate in the supercritical case. We turn to the more universally useful bound~\eqref{est: 2cor3.10} next.

\begin{proposition}
\label{prop:infinite_derivative_inequality} Let $\lambda>0$. We have 
\begin{align}\label{est:5.13}
c(\lambda)\left(\frac{k\left(1-\e^{-1}\right)}{ \sum_{i=1}^{k}\theta_\lambda(i)}-1\right) \theta_\lambda(k)
\lesssim  \frac{\dd}{\dd \lambda}\theta_\lambda(k),  
\end{align}
with the constant $c(\lambda)>0$ given by~\eqref{def:un_const_c}. Moreover, let $0<\lambda_1\leq \lambda_0$, then, for every $k \geq 1$, we have
\begin{align}
\label{cluster_tails_ratio2}
    \theta_{\lambda_1}(k) \lesssim C(\lambda_1,\lambda_0)\exp\left\{-\frac{(1-\e^{-1})c(\lambda_1,\lambda_0)k}{\sum_{i = 1}^k \theta_{\lambda_0}(i)}\right\},
\end{align}
where the constants $c(\lambda_1,\lambda_0)$, $C(\lambda_1,\lambda_0)>0$ are as in the proof of Lemma~\ref{lem:4fcont}.
\end{proposition}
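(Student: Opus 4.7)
The plan mirrors the subcritical argument (Lemmas~\ref{lem:cfv-diff}--\ref{lem:4fcont} and Proposition~\ref{prop:UB_sus}), but it starts from the alternative estimate~\eqref{est: 2cor3.10} of Corollary~\ref{cor:3.2} instead of~\eqref{est: 1cor3.10}. I first derive the differential inequality~\eqref{est:5.13} in infinite volume by taking a continuum limit $n\to\infty$ followed by an infinite-volume limit $L\to\infty$, and then integrate it on $[\lambda_1,\lambda_0]$ to obtain~\eqref{cluster_tails_ratio2}. The advantage of using~\eqref{est: 2cor3.10} in place of~\eqref{est: 1cor3.10} is that the partial sum $\sum_{i=1}^k\theta_\lambda(i)$ is bounded by $k$ unconditionally, so the resulting estimate remains meaningful beyond $\lambda_T$, where $\E_\lambda[|\C(\bar 0,1)|]$ diverges.

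For the first step, I pass to the limit in~\eqref{est: 2cor3.10} exactly as in the proof of Lemma~\ref{lem:cfv-diff}. Three convergences are needed as $n\to\infty$: (i) the discrete sum $\sum_{m\in M_n}\Pi(m,n)\mathfrak r_\lambda(m)\mathfrak R_\lambda(m)$ is a Riemann sum for $F(\lambda)$, which is finite by the standing moment assumption~\eqref{ass:tmc_5}; (ii) each of the finitely many tail probabilities $\theta^{L,n}_\lambda(i)$ for $1\le i\le k$ converges to $\theta^L_\lambda(i)$ by Lemma~\ref{lem:fla}, so the partial sums also converge; (iii) the edge-influence correction term is negligible in the limit by Lemma~\ref{lem:app1}. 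These yield the finite-volume continuum analogue of~\eqref{est:5.13}; passing to $L\to\infty$ via Lemma~\ref{lem:converges_theta_inf_vol} applied coordinatewise then delivers~\eqref{est:5.13}.

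For the second step, I rewrite~\eqref{est:5.13} in logarithmic form as
\[
\frac{\dd}{\dd\lambda}\log\theta_\lambda(k)
\;\gtrsim\; c(\lambda)\!\left(\frac{k(1-\e^{-1})}{\sum_{i=1}^k\theta_\lambda(i)}-1\right).
\]
The standard monotone coupling in $\lambda$ implies $\sum_{i=1}^k\theta_\lambda(i)\le \sum_{i=1}^k\theta_{\lambda_0}(i)$ for every $\lambda\in[\lambda_1,\lambda_0]$, so I may replace the denominator by its value at $\lambda_0$ without weakening the inequality. Integrating over $[\lambda_1,\lambda_0]$ and applying the lower bound~\eqref{est:lb_intc} to the positive contribution $\int_{\lambda_1}^{\lambda_0} c(\lambda)\,\dd\lambda\ge c(\lambda_1,\lambda_0)$, while bounding the (finite) integral stemming from the $-1$ inside the parentheses uniformly via $F\ge 1$, one obtains
\[
\log\theta_{\lambda_0}(k)-\log\theta_{\lambda_1}(k)
\;\gtrsim\; c(\lambda_1,\lambda_0)\,\frac{k(1-\e^{-1})}{\sum_{i=1}^k\theta_{\lambda_0}(i)} - C',
\]
for some constant $C'=C'(\lambda_1,\lambda_0)>0$. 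Exponentiating and absorbing $\e^{C'}$ together with the trivial factor $\theta_{\lambda_0}(k)\le 1$ into a prefactor $C(\lambda_1,\lambda_0)$ produces~\eqref{cluster_tails_ratio2}.

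The main obstacle, exactly as in Lemma~\ref{lem:cfv-diff}, is the justification of interchanging $\dd/\dd\lambda$ with the two successive limits $n\to\infty$ and $L\to\infty$ --- most delicately, the vanishing of the edge-influence sum in~\eqref{est: 2cor3.10} as $n\to\infty$, uniformly on compact intervals of $\lambda$. Since this is precisely what is established in Appendix~\ref{AIII}, no genuinely new analytical work is required: the present proof is methodologically a direct transcript of the subcritical case with $\E_{\lambda_0}[|\C(\bar 0,1)|]$ replaced throughout by the always-finite quantity $\sum_{i=1}^k\theta_{\lambda_0}(i)$.
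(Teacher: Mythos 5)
Your proposal is correct and matches the paper's own (two-line) proof: starting from the alternative bound~\eqref{est: 2cor3.10}, passing to the limits $n\to\infty$ and $L\to\infty$ via Lemmas~\ref{lem:fla}, \ref{lem:app1}, and \ref{lem:converges_theta_inf_vol}, then integrating over $[\lambda_1,\lambda_0]$ exactly as in Lemma~\ref{lem:4fcont}. You actually spell out more detail than the paper does (in particular, the use of monotonicity of $\lambda\mapsto\sum_{i=1}^k\theta_\lambda(i)$ and the handling of the $-1$ term when absorbing constants), but the mechanism is identical.
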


\begin{proof}
    Starting from the estimate~\eqref{est: 2cor3.10} in Corollary~\ref{cor:4main}, we proceed by taking the limit $n\to\infty$ (using Lemma~\ref{lem:fla})  and the limit $L\to\infty$ (using Lemma~\ref{lem:converges_theta_inf_vol}) to obtain inequality~\eqref{est:5.13}. Integrating~\eqref{est:5.13} over $[\lambda_1,\lambda_0]$ and taking the exponential on both sides yields~\eqref{cluster_tails_ratio2}.
\end{proof}

We now move on to the supercritical regime where we closely follow the proof strategy for the second part of~\cite[Theorem 1.5]{Hut20} to derive the following result:
\begin{proposition}\label{prop:LB_sus}
Let $\lambda>\lambda_T$, then we have 
\begin{align*}
    \theta(\lambda)=\lim_{k\to\infty}\theta_\lambda(k)>0
\end{align*}
and thus $\lambda_T\geq \lambda_c$ holds. To be more precise, for any $\lambda_0 >\lambda_T$, there exists a constant $c_0 >0$ such that for any $\lambda \in [\lambda_T,\lambda_0]$ we have $\theta(\lambda) > c_0(\lambda - \lambda_T)$. 
\end{proposition}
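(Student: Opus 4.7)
The plan is to adapt the approach of Hutchcroft~\cite{Hut20} by using the differential inequality~\eqref{est:5.13} from Proposition~\ref{prop:infinite_derivative_inequality} to derive an effective ODE for $\theta(\lambda)$ itself, which is then inverted to yield the linear lower bound.

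First, I would sum~\eqref{est:5.13} over $k = 1, \ldots, N$. Since $\sum_{k=1}^N \frac{\dd}{\dd\lambda}\theta_\lambda(k) = \frac{\dd}{\dd\lambda} M_N(\lambda)$ with $M_N(\lambda) = \sum_{i=1}^N\theta_\lambda(i)$, this produces
\[
\frac{\dd}{\dd\lambda} M_N(\lambda) \;\gtrsim\; c(\lambda)\Big[(1-\e^{-1})\, S_N(\lambda) - M_N(\lambda)\Big],
\qquad
S_N(\lambda) := \sum_{k=1}^N \frac{k\,\theta_\lambda(k)}{M_k(\lambda)}.
\]
The monotonicity of $\theta_\lambda(\cdot)$ implies that $M_k(\lambda)/k$ is non-increasing in $k$ with Ces\`aro limit $\theta(\lambda)$, so $k\theta_\lambda(k)/M_k(\lambda) \to 1$ whenever $\theta(\lambda) > 0$ and hence $S_N(\lambda)/N \to 1$. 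Dividing by $N$, integrating from $\lambda_T$ to $\lambda$, and passing to the limit $N \to \infty$ via monotone convergence yields
\[
\theta(\lambda) - \theta(\lambda_T) \;\geq\; \int_{\lambda_T}^{\lambda} c(s)\bigl[(1-\e^{-1}) - \theta(s)\bigr]\,\dd s
\]
on any interval where $\theta(\cdot) > 0$.

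The associated ODE inequality $\theta'(\lambda) \geq c(\lambda)\bigl((1-\e^{-1}) - \theta(\lambda)\bigr)$ is handled by a Bernoulli-type integrating factor $\exp(\int c\,\dd s)$; assuming $\theta(\lambda_T) = 0$ (the complementary case is trivial by monotonicity of $\theta$), this yields
\[
\theta(\lambda) \;\geq\; (1-\e^{-1})\Bigl(1 - \exp\Bigl(-\int_{\lambda_T}^{\lambda} c(s)\,\dd s\Bigr)\Bigr).
\]
Combining the uniform lower bound $\int_{\lambda_T}^{\lambda} c(s)\,\dd s \geq (\lambda - \lambda_T)/(\lambda_0 F(\lambda_T))$ from~\eqref{est:lb_intc} with the elementary inequality $1 - \e^{-x} \geq x/2$ for small $x \geq 0$ produces the desired estimate $\theta(\lambda) \geq c_0(\lambda - \lambda_T)$ on $[\lambda_T, \lambda_0]$, for a suitable $c_0 > 0$ depending only on $\lambda_0$ and $F(\lambda_T)$.

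The principal subtlety lies in the limit $N \to \infty$ above, which presumes $\theta(\cdot) > 0$ on $(\lambda_T, \lambda_0]$ and hence requires first ruling out the existence of a non-trivial \emph{critical plateau} $\lambda_c > \lambda_T$. I plan to address this by a contradiction argument: if $\theta(\lambda^*) = 0$ at some $\lambda^* > \lambda_T$, then monotonicity of $\theta$ forces $\theta \equiv 0$ on $[\lambda_T, \lambda^*]$, while the integrated inequality~\eqref{cluster_tails_ratio2} applied with $\lambda_0 = \lambda^*$ and $\lambda_1 \in (\lambda_T, \lambda^*)$ would then combine with the (quantitatively controlled) growth of $M_k(\lambda^*)$ to force a sufficiently fast decay of $\theta_{\lambda_1}(k)$, contradicting $\chi_{\lambda_1} = \infty$. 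Extracting this quantitative decay of $\theta_{\lambda^*}(k)$ robustly from the hypothesis $\theta(\lambda^*) = 0$ is the main technical hurdle; once this qualitative step $\lambda_c \leq \lambda_T$ is secured, the ODE argument above closes the proof and delivers the quantitative linear lower bound.
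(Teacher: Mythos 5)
Your positive part (the ODE argument once $\theta>0$ is known on $(\lambda_T,\lambda_0]$) is sound and, modulo the choice of normalization, in the same spirit as the paper. But you correctly flag, and then fail to close, the crucial gap: ruling out a plateau where $\theta\equiv 0$ strictly above $\lambda_T$. That gap is genuine, and the proposed contradiction argument cannot work as stated.

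Concretely: if $\theta(\lambda^\ast)=0$, all you get is $M_k(\lambda^\ast)=\sum_{i\le k}\theta_{\lambda^\ast}(i)=o(k)$, with no rate. Plugging this into~\eqref{cluster_tails_ratio2},
\[
\theta_{\lambda_1}(k)\lesssim \exp\!\Bigl(-\frac{c'\,k}{M_k(\lambda^\ast)}\Bigr),
\]
the exponent tends to $-\infty$ but possibly arbitrarily slowly. For instance $M_k(\lambda^\ast)\sim k/\log\log k$ is compatible with $\theta(\lambda^\ast)=0$, and then the bound gives only $\theta_{\lambda_1}(k)\lesssim(\log k)^{-c'}$, which is not even summable — so you cannot conclude $\chi_{\lambda_1}<\infty$ and the contradiction collapses. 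Nothing about $\theta(\lambda^\ast)=0$ alone gives the quantitative decay needed to bootstrap.

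The paper closes exactly this gap by inserting an intermediate critical parameter $\widetilde{\lambda}_c$ defined via \emph{polynomial} decay of $\theta_\lambda(k)$. Below $\widetilde{\lambda}_c$ one has $\theta_{\lambda_0}(k)\le\alpha k^{-\beta}$, hence $M_k(\lambda_0)\lesssim k^{1-\beta}$, and now~\eqref{cluster_tails_ratio2} \emph{does} deliver stretched-exponential decay and finiteness of the susceptibility; this proves $\widetilde{\lambda}_c\le\lambda_T$, hence $\widetilde{\lambda}_c=\lambda_T$. Above $\widetilde{\lambda}_c$ the \emph{failure} of polynomial decay gives the quantitative input $\limsup_N\log\theta_\lambda(N)/\log N\ge 0$. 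This is fed into a logarithmically-weighted quantity $T_N(\lambda)=\frac{1}{\log N}\sum_{i\le N}\theta_\lambda(i)/i$ (not $M_N/N$), together with the telescoping bound $\theta_\lambda(k)/\Sigma_k\ge\log\Sigma_{k+1}-\log\Sigma_k$ for the partial sums $\Sigma_k$ of $\theta_\lambda$. The telescope converts the sum in~\eqref{est:5.13} into $\log\Sigma_{N+1}/\log N$, which the no-polynomial-decay fact makes $\ge 1$ along a subsequence — thus the differential inequality for $T_N$ has a uniformly positive driving term without ever presupposing $\theta>0$. Your raw $M_N/N$ normalization lacks this telescoping structure and forces exactly the circular hypothesis you identified.

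To repair your proposal along the same route: prove $\widetilde{\lambda}_c=\lambda_T$ first (this is a short calculation from~\eqref{cluster_tails_ratio2} plus the polynomial bound), and then replace $M_N/N$ by $T_N$ with the log-telescope; the rest of your ODE inversion and the use of~\eqref{est:lb_intc} then goes through.
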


\begin{proof}
We start by defining an auxiliary critical parameter 
\begin{align*}
    \nonumber\widetilde{\lambda}_c &:= \sup\{\lambda \geq 0 :  \exists \alpha,\beta >0 \text{ such that } \theta_\lambda(k) \leq \alpha k^{-\beta} \text{ for every } k \geq 1\} \\
    &= \inf \left\{ \lambda \geq 0 : \limsup_{n \to \infty} \frac{\log \theta_\lambda(n)}{\log n} \geq 0\right\}
\end{align*}
Trivially, we have $\widetilde{\lambda}_c \leq \lambda_c$ and $\lambda_T\leq \widetilde\lambda_c$ holds by Proposition~\ref{prop:UB_sus} (which gives us equivalence of exponential decay of the tail function and finiteness of the expectation for the cluster size). The proof strategy is to show first $\widetilde{\lambda}_c=\lambda_T$ and then $\widetilde{\lambda}_c=\lambda_c$. Indeed, we actually have exponential decay below $\widetilde{\lambda}_c$ which we prove in the following: Fix $\lambda_0 < \widetilde{\lambda}_c$ so that there exist $\alpha, \beta > 0$ such that $\theta_{\lambda_0}(k) \leq \alpha k^{-\beta}$ for any $k \geq 1$. Without loss of generality, we can assume that $\beta<1$ holds. Once again, we use the notation $\lesssim$ to denote inequalities up to a multiplicative constant (here we allow these constants to depend only on $\alpha, \beta, \lambda_0$ and model assumptions like the dimension $d\geq 1$). For every $k \geq 1$, we get
\begin{align*}
    \sum_{i = 1}^k \theta_{\lambda_0}(i) \lesssim k^{1 - \beta}
\end{align*}
since $\sum_{i=1}^k i^{-\beta}\leq 1+\frac{1}{1-\beta} k^{1-\beta}$ holds for $\beta<1$. By \eqref{cluster_tails_ratio2}, we then get
\begin{align}\label{ineq:5.17}
    \theta_{\lambda_1}(k) \lesssim k^{- \beta}\exp\left\{-(1-\e^{-1})c(\lambda_1, \lambda_0)k^\beta\right\},
\end{align}
for some $c_1 > 0$, any $0 < \lambda_1 \leq \lambda_0$ and $k \geq 1$. Summing inequality~\eqref{ineq:5.17} over $k$ leads to
\begin{align*}
    \E_{\lambda_1}[\abs{\mathscr C(\Bar{0},1)}] \lesssim& \sum_{k = 1}^\infty k^{1 - \beta}\exp(-(1-\e^{-1})c(\lambda_1,\lambda_0)k^\beta) 
    \\\lesssim& (1-\e^{-1})^{1 - 1/\beta}c(\lambda_1,\lambda_0)^{1 - 1/\beta}<\infty
\end{align*}
for any $0 \leq \lambda_1 \leq \lambda_0$. Therefore, we get $\lambda_0<\lambda_T$ and conclude $\lambda_T\geq \widetilde\lambda_c$.~\\

Next, we prove that $\theta(\lambda) > 0$ holds for every $\lambda > \widetilde{\lambda_c}$. Consider, for $\lambda>0$,
\begin{align*}
    T_N(\lambda) = \frac{1}{\log N}\sum_{i = 1}^N\frac{1}{i}\theta_{\lambda}(i),\quad N\geq 2,
\end{align*}
and notice that we have $\lim_{N \to \infty} T_N(\lambda) = \theta(\lambda)$. We use the notation $\Sigma_k (\lambda) = \sum_{i = 0}^{k - 1}\theta_{\lambda}(i)$ and, by applying~\eqref{est:5.13} from Proposition~\ref{prop:infinite_derivative_inequality}, obtain
\begin{align}\label{est:sup_aux_1}
    \frac{\dd}{\dd \lambda} T_N(\lambda)\gtrsim\frac{c(\lambda)}{\log N}\sum_{k = 1}^N\Bigg[\frac{\theta_\lambda(k)}{\Sigma_k (\lambda)} - \frac{\theta_\lambda(k)}{k} \Bigg],
\end{align}
for every $\lambda>0$ and $N \geq 2$, where $c(\lambda)$ is given by~\eqref{def:un_const_c}. Notice now that we have
\begin{align}\label{est:sup_aux_2}
    \frac{\theta_\lambda(k)}{\Sigma_k (\lambda)} \geq \int_{\Sigma_k (\lambda)}^{\Sigma_{k +1 } (\lambda)} \frac{1}{x} \dd x = \log \Sigma_{k +1 } (\lambda) - \log \Sigma_{k  } (\lambda)
\end{align}
and combining~\eqref{est:sup_aux_1} with~\eqref{est:sup_aux_2} yields
\begin{align*}
    \frac{\dd}{\dd \lambda} T_N(\lambda) \gtrsim \frac{c(\lambda)\log \Sigma_{N +1} (\lambda)}{\log N} - c(\lambda)T_N(\lambda),
\end{align*}
for every $\lambda>0$ and $N \geq 2$. Fixing $\widetilde{\lambda}_c < \lambda_1 < \lambda_2 $, we get 
\begin{align*}
    \frac{\dd}{\dd \lambda} T_N(\lambda) \gtrsim \frac{ c(\lambda)\log \Sigma_{N +1 } (\lambda_1)}{\log N} - c(\lambda) T_N(\lambda_2),
\end{align*}
for every $\lambda_1 \leq \lambda \leq\lambda_2$ and $N\geq 2$. Furthermore, notice the relation
\begin{align*}
    \frac{\log \Sigma_{N +1 } (\lambda)}{\log N} \geq \frac{\log(N \theta_{\lambda}(N))}{\log N} = 1 + \frac{\log(\theta_{\lambda}(N))}{\log N},\quad N\geq 2,
\end{align*}
which --- since $\lambda_1>\widetilde{\lambda}_c$ --- implies
\begin{align*}
    \limsup_{N \to \infty}\inf_{\lambda_1 \leq \lambda \leq \lambda_2}\frac{\dd}{\dd \lambda} T_N(\lambda) \gtrsim c(\lambda)(1-\theta(\lambda_2)).
\end{align*}
Integrating this last inequality and plugging in the lower bound from~\eqref{est:lb_intc} leads to
\begin{align*}
    \theta(\lambda_2) \geq \limsup_{n \to \infty} \int_{\lambda_1}^{\lambda_2}\frac{\dd}{\dd \lambda} T_n(\lambda) \dd \lambda \gtrsim \frac{\lambda_2-\lambda_1}{\lambda_2}\frac{1}{F(\lambda_1)}\left[ 1- \theta(\lambda_2)\right]
\end{align*}
which obviously implies 
\begin{align*}
    \theta(\lambda_2) > 0
\end{align*}
and, to be more precise,
\begin{align}\label{est:super_lb_lin}
\theta(\lambda_2)\gtrsim  \frac{1}{\lambda_2F(\lambda_1)}(\lambda_2-\lambda_1)\geq \frac{1}{\lambda_0F(\lambda_T)}(\lambda_2-\lambda_1)
\end{align}
for all $\lambda_1>\lambda_T$ and $\lambda_2\in[\lambda_1,\lambda_0]$.
Notice that the choice of $\lambda_2>\lambda_1>\widetilde{\lambda}_c$ was arbitrary. This implies that $\widetilde{\lambda_c}  > \lambda_c$, therefore we deduce that $\widetilde{\lambda_c}  = \lambda _c$ and the claim follows readily.
\end{proof}
The proof of our main result Theorem~\ref{thm:Susceptibility Mean-Field Bound} --- postulating the sharpness of the phase transition for the min-reach RCM and the classical RCM with infinite range --- now follows immediately from the results of this section:
\begin{proof}[Proof of Theorem~\ref{thm:Susceptibility Mean-Field Bound}] We have shown in Section~\ref{sect:Pf_mainass} that the min-reach RCM satisfies Assumption~\ref{mainass} for the choice 
\begin{align}
 \mathfrak r_\lambda=\mathfrak R_\lambda=C(\lambda)^{R(m)^d},
\end{align}
with $C(\lambda)>0$ such that $\lambda\mapsto C(\lambda)$ is non-increasing on $(0,\infty)$. Therefore, the moment conditions~\eqref{ass:tmc_5} --- required for all the results stated in this section --- are satisfied by our Assumption~\eqref{assumption_moment_condition}, so that the claims of Proposition~\ref{prop:UB_sus} and Proposition~\ref{prop:LB_sus} hold for the min-reach RCM satisfying~\eqref{assumption_moment_condition}. Naturally, the non-weighted RCM~\eqref{ass:non-weighted} also satisfies the assumptions of Proposition~\ref{prop:UB_sus} and Propsitions~\ref{prop:LB_sus} for the choice
\begin{align}
 \mathfrak r_\lambda=\mathfrak R_\lambda\equiv 1.
\end{align}
 The claim of Theorem~\ref{thm:Susceptibility Mean-Field Bound} thus follows simply from combining the claims of Proposition~\ref{prop:UB_sus} and Proposition~\ref{prop:LB_sus}.
    
\end{proof}

\appendix

\part*{Appendix}

\setcounter{section}{0}
\renewcommand\thesection{\Roman{section}}
\section{Tool box}\label{AI}

In order to prove results presented later in the appendix, we will require several well-known technical tools. For proofs and further discussion of the following theorems, the reader is referred to Last and Penrose \cite{LasPen17}. Some of the definitions and formulas below are quoted verbatim from that reference.

\paragraph{Mecke's Formula.}
Given $m\in\N$ and a measurable function $f\colon\mathbf{N} \times \X^m \to \R_{\geq 0}$, the Mecke equation for $\xi$ states that 
\begin{equation}
    \E_\lambda \left[ \sum_{\vec x \in \eta^{(m)}} f(\xi, \vec{x})\right] = \lambda^m \int
				\E_\lambda\left[ f\left(\xi^{x_1, \ldots, x_m}, \vec x\right)\right] \nu^{\otimes m}\left(\dd \vec{x}\right),  \label{eq:prelim:mecke_n}
\end{equation}
where $\vec x=(x_1,\ldots,x_m)$, $\eta^{(m)}=\{(x_1,\ldots,x_m)\colon x_i \in \eta, x_i \neq x_j \text{ for } i \neq j\}$, and $\nu^{\otimes m}$ is the $m$-product measure of $\nu$ on $\X^m$.

\paragraph{Slivnyak’s theorem and conditional independence.}
Let $\eta$ be a Poisson point process on a measurable space $(\X,\mathcal{X})$ with intensity measure $\lambda\nu$.
For any fixed finite collection of points $\{x_1,\dots,x_k\}\subset\X$, the conditional law of $\eta$ given that it contains these points is the same as the law of
\[
\{x_1,\dots,x_k\} \cup \eta',
\]
where $\eta'$ is an independent Poisson point process on $\X$ with the same intensity measure $\lambda\nu$.
In particular, removing a finite number of known points from $\eta$ leaves the remainder as an independent Poisson process with the same intensity.
This property, often referred to as Slivnyak’s theorem, underlies the Galton–Watson exploration argument used in Lemma~\ref{lem:ex_phase_transition}.

\paragraph{Restriction of a Poisson process.}
Let $\eta$ be a Poisson point process on a measurable space $(\X,\mathcal{X})$ with intensity 
measure $\lambda\nu$, and let $A\subset \X$ be measurable.  
Then the restricted process $\eta\cap A$ is a Poisson point process with 
intensity $\lambda\nu|_A$.  
If $(A_i)_{i\in I}$ are disjoint measurable subsets of $\X$, the restrictions 
$\eta\cap A_i$ are independent.

\paragraph{Projection (mapping theorem).}
Let $\eta$ be a Poisson point process on $\X\times \mathbb{Y}$ with intensity 
$\mu(\mathrm dx)\,\nu(\mathrm dy)$.  
Under the projection map $(x,y)\mapsto x$, the image of $\eta$ is a Poisson 
point process on $\X$ with intensity measure $\mu(\mathrm dx)\,\nu(\mathbb{Y})$.  
More generally, the image of a Poisson point process under any measurable map 
is a Poisson process with the pushed-forward intensity measure.

Furthermore, we are also going to need the following Russo formula proved in a recent paper by Last and Chebunin \cite[Remark~9.5]{CL24}, which works directly in infinite volume.

\paragraph{Margulis--Russo formula.}
Let $\eta$ be the underlying Poisson point process on $\X=\mathbb{R}^d\times(1,\infty)$ with intensity measure $\lambda\nu$, and let $\xi$ denote the random geometric graph constructed from $\eta$ according to the connection function $\varphi$.  
For $x\in\X$, we write $\xi^x$ for the configuration obtained from $\xi$ by adding the point $x$ (with its mark and all the corresponding incident edges).

Assume that $f(\xi)<\infty$ almost surely and imposes no compact-support assumption on $f$, which is crucial for our purposes:
\begin{equation}\label{eq:russo-infinite}
    \frac{\mathrm{d}}{\mathrm{d}\lambda}\,\E_\lambda\!\big[f(\xi)\big]
    \;=\;
    \int_{\X} \E_\lambda\!\big[f(\xi^x)-f(\xi)\big]\,\nu(\mathrm{d}x).
\end{equation}
In particular, \eqref{eq:russo-infinite} applies whenever $f$ is an indicator function of an event depending on the infinite-volume graph $\xi$, such as $\{\lvert \C(\bar 0,1)\rvert \ge k\}$.

\medskip

For completeness, we recall that in the classical situation where $f$ depends only on $\xi_\Lambda$ for some $\nu$-finite $\Lambda\subset\X$, a finite-volume Margulis--Russo formula is well known; see, e.g., \cite[Theorem~3.2]{LasZie17}.

\section{Existence of phase transition}\label{AII}

In this section, we provide a proof of Lemma~\ref{lem:ex_phase_transition}, which 
guarantees the existence of a phase transition under assumptions (A) and (NB).  

The subcritical part follows the classical Galton–Watson comparison method used 
in continuum percolation, as developed in the monographs by Meester and 
Roy~\cite{MeeRoy96} and Penrose~\cite{Pen03}; see also Last and 
Penrose~\cite{LasPen17} for the modern formulation via Mecke’s formula and 
Slivnyak’s theorem, and related developments in Deijfen and 
Häggström~\cite{DH22b}.

For the supercritical part, we use a different comparison argument. We first restrict the PPP to a positive–measure set of weights and to 
a finite interval of radii on which the connection probability admits a uniform 
positive lower bound.  
By the restriction and mapping properties of the PPP (see 
Section~\ref{AI}), the restricted configuration is again a marked PPP but with controlled weights and spatial intensity. On this process, we construct a finite–range unweighted random connection model 
whose connection function is bounded from above by the restricted marked 
kernel.  
Thus, the unweighted model is stochastically dominated by the restricted marked 
model.  
Since unweighted finite–range RCMs in dimensions $d\ge2$ have a non–trivial 
phase transition (see Meester and Roy~\cite[Chapter~3]{MeeRoy96}), percolation 
of the unweighted RCM implies percolation of the restricted model, and hence of 
the original one.

We now give the details.

\begin{proof}[Proof of Lemma~\ref{lem:ex_phase_transition}]
We begin by defining a few quantities that will be used throughout the proof.  
For $r \ge 0$ and $a \ge 1$, set
\[
\iota(r;a) = \int_1^\infty \varphi(r;a,b)\,\pi(\dd b), \qquad
I_{\inf} = \int_0^\infty r^{d-1}\iota(r;1)\,\dd r, \qquad
I_{\sup} = \esssup_{a\ge1}\int_0^\infty r^{d-1}\iota(r;a)\,\dd r.
\]
By Assumption~(A), the map $a\mapsto\iota(r;a)$ is non-decreasing for every $r$, hence
\begin{equation}\label{eq:uniform-inf}
\int_0^\infty r^{d-1}\iota(r;a)\,\dd r \ge I_{\inf}
\qquad \text{for all } a \ge 1.
\end{equation}
Assumption~(NB) states precisely that $0<I_{\inf}\le I_{\sup}<\infty$.

\medskip
\noindent\textbf{Subcritical regime.}
We first show that the expected cluster size 
$\chi_\lambda=\E_\lambda[|\C(\bar 0,1)|]$ is finite for sufficiently small~$\lambda>0$.

We explore the cluster $\C(\bar 0,1)$ in the graph $\xi^{(\bar 0,1)}$ in a 
breadth–first manner.  
For $n \ge 0$, let $\mathcal{Z}_n$ denote the set of vertices discovered at 
generation~$n$ and set $Z_n = |\mathcal{Z}_n|$.  
The exploration starts with $\mathcal{Z}_0 = \{(\bar 0,1)\}$, hence $Z_0=1$.  
For $n \ge 0$, the next generation $\mathcal{Z}_{n+1}$ consists of those points 
of $\eta$ that, in $\xi^{(\bar 0,1)}$, are adjacent to some vertex in 
$\mathcal{Z}_n$ but do not belong to $\mathcal{Z}_0 \cup \cdots \cup \mathcal{Z}_n$.

We define
\[
\mathcal{E}_n = \bigcup_{k=0}^{n} \mathcal{Z}_k,
\qquad
\mathcal{R}_n = \eta \setminus \mathcal{E}_n,
\]
so that $\mathcal{E}_n$ represents the vertices already discovered up to 
generation~$n$, and $\mathcal{R}_n$ the remaining unexplored vertices.  
By Slivnyak’s theorem, conditional on $\mathcal{E}_n$, the process 
$\mathcal{R}_n$ is again a Poisson point process on~$\X$ with intensity 
$\lambda\nu$, independent of the previously discovered part.  
Fix a discovered vertex $(\bar x,a)\in\mathcal{Z}_n$.  
Conditionally on the current exploration, each point $(\bar y,b)\in\mathcal{R}_n$ 
is connected to $(\bar x,a)$ independently with probability 
$\varphi((\bar x,a),(\bar y,b))$.

By Mecke’s formula, the number of potential neighbors of a vertex with mark~$a$ 
in the full graph~$\xi^{(\bar 0,1)}$ follows a Poisson law with mean
\begin{equation}\label{eq:mean-pois}
m(a) = \lambda s_d \int_0^\infty r^{d-1}\iota(r;a)\,\dd r,
\end{equation}
where $s_d$ denotes the surface area of the unit sphere in $\mathbb{R}^d$.

Conditional on $\mathcal{E}_n$, let $(a_i)_{i\le Z_n}$ denote the weights of the 
vertices in $\mathcal{Z}_n$.  By construction and by~\eqref{eq:mean-pois}, the 
expected number of points discovered at the next step satisfies
\begin{equation}\label{eq:subcrit_conditional_bound}
\E_\lambda[Z_{n+1}\mid \mathcal{E}_n]
\le \sum_{i=1}^{Z_n} m(a_i),
\end{equation}
since each vertex of mark $a_i$ gives rise on average to $m(a_i)$ potential 
neighbors, and some of these may already have been discovered or may coincide 
for different parents.

Using Assumption~(NB) and~\eqref{eq:mean-pois}, we have
\begin{equation}\label{eq:subcrit_m_upper}
m(a_i) \le \lambda s_d I_{\sup},\quad i\leq Z_n,
\end{equation}
and therefore
\begin{equation*}\label{eq:subcrit_expectation_recursive}
\E_\lambda[Z_{n+1}\mid \mathcal{E}_n]
\le (\lambda s_d I_{\sup})\,Z_n.
\end{equation*}
Taking expectations and iterating over $n$ yields
\begin{equation*}\label{eq:subcrit_Zn_bound}
\E_\lambda[Z_n] \le (\lambda s_d I_{\sup})^n.
\end{equation*}
Since the cluster size satisfies
\(
|\C(\bar 0,1)| = \sum_{n\ge0}Z_n,
\)
we obtain, whenever $\lambda s_d I_{\sup}<1$ holds,
\[
\chi_\lambda
= \sum_{n\ge0}\E_\lambda[Z_n]
\le \sum_{n\ge0}(\lambda s_d I_{\sup})^n
< \infty.
\]
Hence, the expected cluster size is finite in this regime, and consequently
\[
\lambda_T \ge \frac{1}{s_d I_{\sup}} > 0.
\]

\medskip
\noindent\textbf{Supercritical regime.}
We now show that $\theta(\lambda) > 0$ holds for all 
$\lambda$ sufficiently large, and thus $\lambda_c<\infty$.

Assumption (NB) states that
\[
   I_{\inf} = \int_0^\infty r^{d-1}\,\iota(r;1)\,\dd r > 0,
\]
where we set
\[
   \iota(r;1) = \int_1^\infty \varphi(r;1,b)\,\pi(\dd b).
\]
Since the integrand in $I_{\inf}$ is non-negative, the positivity of $I_{\inf}$ 
implies that $\iota(r;1)>0$ on a set of radii of positive Lebesgue measure.  
By Assumption (A.2), which states that $\varphi(r;a,b)$ is non-increasing in 
the distance variable $r$, we may find $r_2\in(0,\infty)$ such that 
$\varphi(r;1,b)$ is bounded away from zero on $(0,r_2]$ for a positive mass set 
of weights $b$.  
More precisely, by measurability of $\varphi$ and Fubini’s theorem, there exists
a measurable set $B\subset(1,\infty)$ with $\pi(B)>0$ and a constant 
$\varepsilon>0$ such that
\begin{equation}\label{eq:uniform-lower}
   \varphi(r;1,b) \ge \varepsilon
   \qquad\text{for all } r\in(0,r_2] \text{ and all } b\in B.
\end{equation}
By Assumption (A.3), which states that $\varphi(r;a,b)$ is non-decreasing in 
both weights, this bound extends to all $a\in B$:
\begin{equation}\label{eq:uniform-lower-a}
   \varphi(r;a,b) \ge \varepsilon
   \qquad\text{for all } r\in(0,r_2],\ a\in B,\ b\in B.
\end{equation}
Let
\(
   \eta_B := \eta \cap \bigl(\mathbb R^d\times B\bigr)
\)
be the restriction of the marked PPP to the set of weights $B$.  
By the restriction property of Poisson point processes (see Appendix~\ref{AI}),
$\eta_B$ is a PPP on $\mathbb R^d\times B$ with intensity
measure $\lambda\,\dd \bar x\,\pi|_B(\dd a)$.  
Projecting $\eta_B$ onto the spatial coordinate yields a homogeneous PPP on $\mathbb R^d$ of intensity
\(
   \lambda_B := \lambda\,\pi(B).
\)
In the comparison below, we consider only vertices whose weights lie in $B$; the
distinguished point $(\bar 0,1)$ may be added on top of this configuration, but
this does not affect the existence of an infinite cluster.

We now define an auxiliary unweighted random connection model on the spatial
process associated to $\eta_B$.  
Let $g\colon[0,\infty)\to[0,1]$ be the finite range adjacency function
\[
   g(r) := \varepsilon\,\mathbf 1_{\{0<r\le r_2\}}(r).
\]
Consider the unweighted random connection model on the Poisson process of
intensity $\lambda_B$ in $\mathbb R^d$ with the adjacency function $g$.  
By \eqref{eq:uniform-lower-a}, for any two points $(\bar x,a)$ and $(\bar y,b)$
with $a,b\in B$ and $|\bar x-\bar y|\le r_2$, we have
\[
   g(|\bar x-\bar y|) \le \varphi((\bar x,a),(\bar y,b)).
\]
Hence, every edge present in the unweighted model is also present in the 
restricted marked model if we couple the models accordingly.  
In other words, the unweighted RCM $(\lambda_B,g)$ is stochastically dominated by the restricted marked RCM. In particular, we get the implications
\begin{align}
&\text{percolation of the unweighted RCM}(\lambda_B,g)
  \Longrightarrow
  \text{percolation in the restricted marked RCM}, \nonumber
  \label{eq:domination} \\[1mm]
&\text{percolation in the restricted marked RCM}
  \Longrightarrow
  \text{percolation in the full marked RCM}. \
\end{align}

Since $g$ has finite range and $d\ge2$, the unweighted RCM
with the adjacency function $g$ features a non-trivial phase transition: There exists $\lambda_c(g)\in(0,\infty)$ such that the model
percolates whenever the spatial intensity exceeds $\lambda_c(g)$
(see Meester and Roy~\cite[Chapter~3]{MeeRoy96}).  
Therefore, if
\[
   \lambda_B = \lambda\,\pi(B) > \lambda_c(g),
\]
then the unweighted RCM percolates, and so, by \eqref{eq:domination}, the
full marked RCM percolates as well.  
 Hence, percolation occurs for
 \[
    \lambda > \frac{\lambda_c(g)}{\pi(B)}.
 \]

This shows $\lambda_c<\infty$. Combining that statement with the previously derived lower bound for $\lambda_T$ and using $\lambda_T\le\lambda_c$, we conclude that $0<\lambda_T\le\lambda_c<\infty$.

\end{proof}

\section{Taking limits}\label{AIII}

In this appendix section we collect the limit transitions needed for passing 
from the discrete, finite-volume setting to the continuous, infinite-volume framework.  
The finite-volume discrete differential inequalities used earlier
 (see Corollaries~\ref{cor:3.1} and~\ref{cor:3.2}) are establsihed on a bounded 
domain.  
To obtain their continuous and infinite-volume counterparts, we first take the 
continuum limit $n\to\infty$ at fixed volume $L$, and subsequently the 
infinite-volume limit $L\to\infty$.  
The results below formalize these two steps and justify the transition to the 
infinite-volume continuous differential inequalities in Section~\ref{sect:Inf_vol_lim}.

\subsection{Continuum limit}
In this subsection we take the continuum limit $n\to\infty$ while keeping the 
volume $L$ fixed.  
This provides the passage from the discrete approximation of our model to its 
finite-volume continuum version.

The next result confirms that the contribution of edge 
influences vanishes as $n\to\infty$ (by design of our exploration scheme), a 
fact that is crucial for reducing the discrete differential inequality to its 
continuum form and for proceeding in the spirit of~\cite{Hut22}.

\begin{lemma}\label{lem:app1} Let $\lambda>0$ and let the moment condition
\begin{align}
\int_1^\infty \mathfrak R_\lambda(m) \pi(\dd m)<\infty    
\end{align}
hold. Then, for every $L,n \in \N$ sufficiently large, $$I^{n,L,\tilde \gamma}_\lambda:=\frac{1}{\delta_{\tilde\gamma}(\Bar{0},1)}\sum_{e\in E_n^L}\delta_{\tilde\gamma}(e) \textsc{Inf}^P_e\left(\mathds{1}_{\{\vert\mathcal C(\Bar{0},1)\vert\geq k\}}\right)$$ is bounded from above uniformly in $\tilde \gamma$. Moreover, for every $\lambda>0$ and $L$ sufficiently large, we have $$\lim_{n\to\infty}\sup_{\tilde \gamma>0}I^{n,L,\tilde \gamma}_\lambda=0.$$
\end{lemma}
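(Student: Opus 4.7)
The plan rests on two elementary observations that together encode the design
of the exploration scheme introduced in Section~2.4. First, an edge
$e=\{w,w'\}\in E_n^L$ is queried by the decision forest only after one of its
endpoints has already been revealed as an open vertex of the open cluster of
some open green vertex, and hence
$\delta_{\tilde\gamma}(e)\le \delta_{\tilde\gamma}(w)+\delta_{\tilde\gamma}(w')$,
while $\delta_{\tilde\gamma}(v)\le p_\lambda(v)$ since being in a green
component requires the vertex itself to be open. Second, combining the
standard identity
$\textsc{Inf}^P_e(\mathds{1}_{\{|\mathcal C(\bar 0,1)|\ge k\}})
  = 2\varphi(|\bar u-\bar v|;m,m')(1-\varphi)\,P(e\text{ is pivotal})$
with the elementary fact that a pivotal edge necessarily has both endpoints
open yields
$\textsc{Inf}^P_e(\mathds{1}_{\{|\mathcal C(\bar 0,1)|\ge k\}})
  \le 2\,\varphi(|\bar u-\bar v|;m,m')\,p_\lambda(w)\,p_\lambda(w')$.
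It is this last inequality that encodes the crucial smallness: each edge
carries a factor of order $(2^{-nd})^{2}$ that has to compete with the
$(2^{nd}L^{d})^{2}$ cardinality of $E_n^L$.

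Plugging both estimates into the definition of $I^{n,L,\tilde\gamma}_\lambda$,
exploiting the symmetry of the sum over ordered pairs $(w,w')$, and applying
assumption~(NB) to the inner $w'$-sum,
$\sum_{w'}\varphi(|\bar u-\bar v|;m,m')\,p_\lambda(w')\lesssim \lambda$
uniformly in $w$ and $n$, reduces the whole problem to controlling the
univariate expression
$\delta_{\tilde\gamma}(\bar 0,1)^{-1}\sum_{w\in V_n^L}\delta_{\tilde\gamma}(w)\,p_\lambda(w)$.
For the uniform-in-$\tilde\gamma$ boundedness I would use
Assumption~\ref{mainass_a}{a)} together with $\delta_{\tilde\gamma}(\bar u,1)\lesssim \delta_{\tilde\gamma}(\bar 0,1)$, giving
$\delta_{\tilde\gamma}(w)/\delta_{\tilde\gamma}(\bar 0,1)\lesssim \mathfrak R_\lambda(m)$,
and combine it with $p_\lambda(w)\le \lambda 2^{-nd}\Pi(m,n)$, so that the
remaining sum is a Riemann approximation to
$\lambda L^d\int_1^\infty \mathfrak R_\lambda(m)\,\pi(\dd m)$, which is finite by
the standing moment assumption and $\tilde\gamma$-independent.

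The vanishing as $n\to\infty$ uses the additional smallness of
$p_\lambda(w)\lesssim 2^{-nd}\Pi(m,n)$ extracted once
$\delta_{\tilde\gamma}(w)=p_\lambda(w)\psi_{\tilde\gamma}(w)$ is written via
its site-magnetization representation with $\psi_{\tilde\gamma}\in[0,1]$, so
that the inner sum becomes $\sum_w p_\lambda(w)^2\psi_{\tilde\gamma}(w)$. A
truncation at weight $m\le H_n$ with $H_n\to\infty$ sufficiently slowly will
separate a ``light'' contribution, whose $L^d 2^{-nd}$ decay follows from
$\sum_{m\le H_n}\Pi(m,n)^2$ being bounded, from a ``heavy'' tail that is
absorbed through the ratio bound by $\int_{H_n}^\infty \mathfrak R_\lambda(m)\,\pi(\dd m)$,
which vanishes by the moment condition; a dominated-convergence argument with
dominating function $\lambda\mathfrak R_\lambda(m)$ on $\mathbb R^d\times[1,\infty)$
then yields the desired limit. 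The main difficulty will be securing this
vanishing uniformly in $\tilde\gamma>0$: since
$\delta_{\tilde\gamma}(\bar 0,1)\to 0$ as $\tilde\gamma\to 0$, one cannot
divide first and estimate afterwards — the necessary cancellation must happen
inside the ratios $\delta_{\tilde\gamma}(w)/\delta_{\tilde\gamma}(\bar 0,1)$,
whose uniform control in $\tilde\gamma$ is exactly the content of
Assumption~\ref{mainass_a}{a)}.
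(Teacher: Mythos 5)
There is a genuine gap in the vanishing part of your argument. The claim ``$\delta_{\tilde\gamma}(v) \le p_\lambda(v)$ since being in a green component requires the vertex itself to be open'' is false for the decision forest at hand. Each vertex $v$ has its own rooted tree $T^v$, which first queries the ghost status $\rho(\tilde v)$; whenever $\rho(\tilde v)=1$ (probability $1-e^{-\tilde\gamma}$), the tree proceeds to query $\omega(v)$, \emph{regardless} of the state of $v$. Hence
\[
\delta_{\tilde\gamma}(v) \;\ge\; 1-e^{-\tilde\gamma}
\]
for every $v$, which dominates $p_\lambda(v)\sim\lambda 2^{-nd}\Pi(m,n)$ as soon as $n$ is large. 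Equivalently, the magnetization identity gives
\[
\delta_{\tilde\gamma}(v)=E\bigl[1-e^{-\tilde\gamma|\mathcal C(v)|}\bigr]\ge (1-e^{-\tilde\gamma})\bigl(1-p_\lambda(v)\bigr)
\]
from the contribution of $\{v\text{ closed}\}$, which is incompatible with a factorization $\delta_{\tilde\gamma}(v)=p_\lambda(v)\,\psi_{\tilde\gamma}(v)$ with $\psi_{\tilde\gamma}\in[0,1]$. The proposed inner sum $\sum_w p_\lambda(w)^2\psi_{\tilde\gamma}(w)$ is therefore not what you actually have, and the truncation/dominated-convergence step cannot deliver the $n\to\infty$ decay.

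The correct source of the missing smallness is precisely the observation you make in passing but then discard: $e$ is revealed only if some endpoint is both revealed \emph{and} open, and for a decision tree the event ``$w$ is queried'' is determined before $\omega(w)$ is read and is therefore independent of $\{\omega(w)=1\}$. This yields
\[
\delta_{\tilde\gamma}(e)\;\le\;\delta_{\tilde\gamma}(w)\,p_\lambda(w)+\delta_{\tilde\gamma}(w')\,p_\lambda(w'),
\]
which is strictly stronger than the union bound $\delta_{\tilde\gamma}(e)\le\delta_{\tilde\gamma}(w)+\delta_{\tilde\gamma}(w')$ you use. Feeding this into your computation, the final quantity becomes $\tfrac{\lambda}{\delta_{\tilde\gamma}(\bar 0,1)}\sum_w\delta_{\tilde\gamma}(w)p_\lambda(w)^2\lesssim \lambda^3\,2^{-nd}L^d\sum_m\mathfrak R_\lambda(m)\Pi(m,n)$, which vanishes as $n\to\infty$ uniformly in $\tilde\gamma$. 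That is in spirit what the paper does, although the paper splits edges into those incident to $(\bar 0,1)$ and the rest, uses the cruder influence bound $\textsc{Inf}^P_e\lesssim p_\lambda(w)p_\lambda(w')$ without the $\varphi$ factor, and counts spatial edges directly ($|\tilde E|\sim 2^{2nd}$) rather than integrating $\varphi$ via~(NB). Your $\varphi$-integration trick is valid and a bit cleaner than the paper's edge count, but by itself it cannot compensate for the dropped openness factor: with $\delta_{\tilde\gamma}(e)\le\delta_{\tilde\gamma}(w)+\delta_{\tilde\gamma}(w')$ you only recover uniform boundedness, not decay.
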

\begin{proof} For $L,n\in\N$, let $\hat E_n^L$ denote the set of edges in $E_n^L$ incident to $(\Bar{0},1)$ and $\tilde E_n^L:=E_n^L\setminus \hat E_n^L$.
For every $e=\{v, (\Bar{0},1)\}\in\hat E_n^L$ such that $v=(\Bar{u},m)$ we have
\begin{align}
    \delta_{\tilde\gamma}(e)\leq \delta_{\tilde\gamma}(v)P^{L,n}_\lambda(v \text{ is open}),
\end{align}
since the edge $e$ is only revealed if $v$ is both revealed and open. Similarly, for every $e=\{v_1,v_2\}\in \tilde E_n^L$ such that $v_1=(\Bar{u}_1,m_1)$ and $v_2=(\Bar{u}_2,m_2)$ we have
\begin{align}
    \delta_{\tilde\gamma}(e)\leq \delta_{\tilde\gamma}(v_1)P^{L,n}_\lambda(v_1 \text{ is open})+\delta_{\tilde\gamma}(v_2)P^{L,n}_\lambda(v_2 \text{ is open}),
\end{align}
since $e=\{v_1,v_2\}\in \tilde E_n^L$ is only revealed if at least one of the vertices $v_1,v_2$ is both revealed and open. Since (for $n,L$ sufficiently large) we have $\delta_{\tilde\gamma}((\Bar{u},1))\lesssim \delta_{\tilde\gamma}((\Bar{0},1))$ for every $u\in U^L_n$ and using Assumption~\ref{mainass_b}{a)}, we get:
\begin{itemize}
    \item For $e=\{v, (\Bar{0},1)\}\in\hat E_n^L$ such that $v=(\Bar{u},m)$
\begin{align*}
\frac{\delta_{\tilde\gamma}(e)}{\delta_{\tilde\gamma}((\Bar{0},1))}
\lesssim& 
P^{L,n}_\lambda(v \text{ is open})\mathfrak R_\lambda(m)
\\\lesssim& \Big(1-\e^{-\lambda 2^{-nd}}\Big)\mathfrak R_\lambda(m).
\end{align*}
\item For $e=\{v_1,v_2\}\in \tilde E_n^L$ such that $v_1=(\Bar{u}_1,m_1)$ and $v_2=(\Bar{u}_2,m_2)$
 \begin{align*}
\frac{\delta_{\tilde\gamma}(e)}{\delta_{\tilde\gamma}((\Bar{0},1))}
\lesssim& 
P^{L,n}_\lambda(v_1 \text{ is open})\mathfrak R_\lambda(m_1)+P^{L,n}_\lambda(v_2 \text{ is open})\mathfrak R_\lambda(m_2)
\\\lesssim& \Big(1-\e^{-\lambda 2^{-nd}}\Big)(\mathfrak R_\lambda(m_1)+\mathfrak R_\lambda(m_2)).
\end{align*}
\end{itemize}
Therefore, we obtain the estimate
\begin{align}\label{est:A_est1}
\nonumber&\frac{1}{\delta_{\tilde\gamma}(\Bar{0},1)}\sum_{e\in E_n^L}\delta_{\tilde\gamma}(e) \textsc{Inf}^P_e\left(\mathds 1_{\{\vert \mathcal C(\Bar{0},1)\vert\geq k \}}\right)
\\\lesssim & \sum_{e=\{(\Bar{u},m),(\Bar{0},1)\}\in \hat E_n^L}\mathfrak R_\lambda(m)\Big(1-\e^{-\lambda 2^{-nd}}\Big) \textsc{Inf}^P_e\left(\mathds 1_{\{\vert \mathcal C(\Bar{0},1)\vert\geq k \}}\right)\nonumber
\\&+
\sum_{e=\{(\Bar{u}_1,m_1),(\Bar{u}_2,m_2)\}\in \tilde E_n^L}\left(\mathfrak R_\lambda(m_1)+\mathfrak R_\lambda(m_2)\right)\Big(1-\e^{-\lambda 2^{-nd}}\Big) \textsc{Inf}^P_e\left(\mathds 1_{\{\vert \mathcal C(\Bar{0},1)\vert\geq k \}}\right).
\end{align}

We use the following estimates for the influences of the edges:
For $e=\{(\Bar{0},1), v)\}\in \hat E_n^L$ such that $v=(\Bar{u},m)$, we have
\begin{align*}
\textsc{Inf}^P_e(f)\leq 2\Big(1-\e^{-\lambda 2^{-nd}\Pi(m,n)}\Big) \lesssim \lambda 2^{-nd}\Pi(m,n)
\end{align*}
and for $e\in \tilde E_n^L$ we have
\begin{align*}
\textsc{Inf}^\mu_e(f)\leq \Big(1-\e^{-\lambda 2^{-nd}\Pi(m_1,n)}\Big)\Big(1-\e^{-\lambda 2^{-nd}\Pi(m_2,n)}\Big)\leq \lambda^2 2^{-2nd} \Pi(m_1,n)\Pi(m_2,n).
\end{align*}

Plugging those estimates into~\eqref{est:A_est1} and using the notation $\vert \hat E \vert$ (resp. $\vert \tilde E \vert$) for the number of unweighted edges in $\hat E_n^L$ (resp. $\tilde E_n^L$),  we get
\begin{align*}
    \nonumber&\frac{1}{\delta_{\tilde\gamma}(\Bar{0},1)}\sum_{e\in E_n^L}\delta_{\tilde\gamma}(e) \textsc{Inf}^P_e\left(\mathds 1_{\{\vert \mathcal C(\Bar{0},1)\vert\geq k \}}\right)
\\\lesssim & \left(\sum_{m\in M_n}\mathfrak R_\lambda(m)\Pi(m,n)\right) \lambda 2^{-nd}\Big(1-\e^{-\lambda 2^{-nd}}\Big)\vert \hat E \vert\nonumber
\\&+
\left(\sum_{m_1\in M_n}\sum_{m_2\in M_n}\left(\mathfrak R_\lambda(m_1)+\mathfrak R_\lambda(m_2)\right)\Pi(m_1,n)\Pi(m_2,n)\right)\Big(1-\e^{-\lambda 2^{-nd}}\Big) \lambda^2 2^{-2nd} \vert \tilde E \vert.
\end{align*}
Moreover, we have
\begin{align*}
\vert \hat E \vert\leq 2^{nd}C(L,d) ,\qquad
\vert \tilde E \vert\leq 2^{2nd}C(L,d)^2     
\end{align*}
for a constant $C(L,d)>0$ which only depends on $L$ and $d$. Altogether, we get
\begin{align*}
&\frac{1}{\delta_{\tilde\gamma}(\Bar{0},1)}\sum_{e\in E_n^L}\delta_{\tilde\gamma}(e) \textsc{Inf}^P_e\left(\mathds 1_{\{\vert \mathcal C(\Bar{0},1)\vert\geq k \}}\right)
\\\lesssim&
\big[\lambda C(L,d)+\lambda^2 C(L,d)^2\big]\left(\sum_{m\in M_n}\mathfrak R_\lambda(m)\Pi(m,n)\right) \Big(1-\e^{-\lambda 2^{-nd}}\Big)
\end{align*}
and the claim follows immediately under the moment condition 
\begin{align*}
\int_1^\infty \mathfrak R_\lambda(m) \pi(\dd m)<\infty.    
\end{align*}
\end{proof}

We next show that the discrete quantities appearing in the finite-volume
differential inequalities converge, as the mesh size tends to zero, to their
continuous counterparts on $\Lambda^L = [-L,L]^d \times [1,\infty)$.  
To do so it is convenient to construct an auxiliary discrete model coupled to the continuum Poisson configuration.

\begin{coupling*}[Auxiliary discretized model]For each $n$, we consider the partition of $\Lambda_n^L = [-L- 2^{-n -1},L+ 2^{-n -1}]^d \times [1,\infty)$ into
cells $\{R_u\colon u=(\bar u,m)\in V_n^L\}$. The cells are given, as already hinted at in the definition of the finite lattice approximation in Section~\ref{sect:2}, by $$R_u=\bigotimes_{i=1}^d[\Bar{u}_i-2^{-n-1},\Bar{u}_i+2^{-n-1}]\times [m,m+2^{-n}]$$ for $u=(\Bar{u},m)\in V^L_n$. The boundary points form a $\nu$-null set and are negligible for our purposes, but we can also choose boxes half-open in each coordinate to define a partition.   
Given a realization of the continuum marked Poisson graph $\xi^{(\bar 0,1)}$
in $\Lambda_n^L$, we associate to it a discrete graph whose vertices are the cells
$\{R_u\colon u\in V_n^L\}$ as follows.

A vertex $R_u$ (with $u=(\bar u,m)\in V_n^L$) is declared \emph{open} if the cell
$R_u$ contains at least one point of $\xi^{(\bar 0,1)}$, and
\emph{closed} otherwise.  
Two vertices $R_{u_1}$ and $R_{u_2}$ (with $u_i=(\bar u_i,m_i)$) are joined by an edge in
the discrete graph if there exists at least one edge in $\xi^{(\bar 0,1)}$
between a point of $\xi^{(\bar 0,1)}\cap R_{u_1}$ and a point of
$\xi^{(\bar 0,1)}\cap R_{u_2}$.  
In other words, the discrete graph on $\{R_u\colon u\in V_n^L\}$ records which cells are
occupied by Poisson points and which pairs of occupied cells are linked by at
least one continuum edge between points inside them.

We then define $\C_L^n(\bar 0,1)$ to be the (random) cluster of the vertex
$R_{(\bar 0,1)}$ in this discrete graph, that is, the set of all vertices
$R_u$ that are connected to $R_{(\bar 0,1)}$ by an open path of edges.  
\end{coupling*}

\begin{remark}[Poisson cells with at most one point]
\label{rem:tech-lemma-fla}
Let $N_{R_{(\bar u,m)}}$ be the number of Poisson points in the cell
$R_{(\bar u,m)}$.  
Define
\[
   K_n:=\bigcap_{(\bar u,m)\in V_n^L}\{N_{R_{(\bar u,m)}}\le 1\}.
\]
A Poisson estimate gives
$\pla(N_{R_{(\bar u,m)}}\ge2)\le \lambda^2\nu(R_{(\bar u,m)})^2$, hence
\[
   \mathbb P_\lambda(K_n^c)
   \le \lambda^2\sum_{(\bar u,m)\in V_n^L}\nu(R_{(\bar u,m)})^2
   \xrightarrow[n\to\infty]{} 0 .
\]
Thus with high probability each cell contains at most one Poisson point.  
This estimate will be used to control the error terms arising when we compare
the discrete model $P_\lambda^{L,n}$ with the discretized continuum model described above.
\end{remark}

\begin{remark}[Connectivity in the auxiliary model]
\label{rem:conn_auxiliary}
    For two distinct cells \(R_u,R_v\subset\Lambda_n^L\), we denote by
\[
   \widehat{\varphi}(R_u,R_v) := \pla(\conn{R_u}{R_v}{\xi} \mid R_u \text{ and } R_v \text{ are open} )
\]
the probability that they are joined by an edge in the auxiliary discrete graph,
already conditioned on both cells being open.  
In general, this quantity does not admit a simple closed expression, because an
open cell may contain more than one Poisson point.

On the high–probability event \(K_n\),
the definition becomes explicit: conditioning on \(K_n\) and on both cells being open forces them to contain exactly one point each, with independent conditional distributions proportional to
$\nu$ on their respective cells. Hence,
\begin{equation}\label{eq:hat-varphi-simple}
   \widehat{\varphi}(R_u,R_v)
   = \frac{1}{\nu(R_u)\,\nu(R_v)} \!\int_{R_u}\!\int_{R_v}
        \varphi(\|\bar x-\bar y\|;m_x,m_y)\,
        \nu(\dd x)\,\nu(\dd y)
   \qquad \text{on } K_n.
\end{equation}
Notice that the expression on the right side of~\eqref{eq:hat-varphi-simple} does not depend on $\lambda$. On \(K_n\), the cell–cell connection probability is essentially the average of the
continuum connection function over the two cells.  
Since \(\mathbb P_\lambda(K_n^c)\to0\), this representation is key for the convergence arguments in the proof of the next result Lemma~\ref{lem:fla}.
\end{remark}

By construction, $\C_L^n(\bar 0,1)$ is a measurable function of the full
continuum configuration $\xi^{(\bar 0,1)}$.  
We introduce the notation
\begin{align}
   \widehat\theta_\lambda^{L,n}(k)
   := \mathbb P_\lambda\big(|\C_L^n(\bar 0,1)|\ge k\big),
   \qquad k\in\mathbb N.
\end{align}
This auxiliary quantity will be used in the proof of the next lemma to relate
the asymptotic behavior of the discrete tail probabilities
$\theta_\lambda^{L,n}(k)$ and the continuum tail probabilities
$\theta_\lambda^{L}(k)$.

\begin{lemma}\label{lem:fla}Let $\lambda > 0$ and $k\in \N$. As $n$ goes to infinity, the following limits hold: 
\begin{enumerate}
\item[(1)]\label{it1:lem:fla} $\theta^{L,n}_\lambda(k)\to \theta^L_\lambda(k)$,
~\\\item[(2)]\label{it2:lem:fla} $\frac{d}{d\lambda} \theta^{L,n}_\lambda(k) \to \frac{d}{d\lambda} \theta^L_\lambda(k)$,
~\\\item[(3)]\label{it3:lem:fla} $E_\lambda^{L,n}[\vert \mathcal{C}(\Bar{0},1)\vert]\to \E_{\lambda}[|\C_L(\bar 0,1)|]$.
\end{enumerate}
\end{lemma}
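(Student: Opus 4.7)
The three tail functions $\theta_\lambda^{L,n}(k)$, $\widehat\theta_\lambda^{L,n}(k)$ and $\theta_\lambda^L(k)$ should be compared on a single probability space which simultaneously realizes the discrete law $P_\lambda^{L,n}$ and the continuum finite-volume graph together with its cell-discretization introduced at the beginning of this subsection. Parts~(1) and~(3) then reduce to two successive comparisons whose errors vanish as $n\to\infty$; part~(2) follows by passing to the limit inside the discrete Russo formula.

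\emph{Step 1 (auxiliary vs.\ continuum).} On the event $K_n$ of Remark~\ref{rem:tech-lemma-fla} each cell contains at most one Poisson point, so by construction the cells in $\C_L^n(\bar 0,1)$ are in bijection with the points of $\C_L(\bar 0,1)$, hence $|\C_L^n(\bar 0,1)|=|\C_L(\bar 0,1)|$. Since $\mathbb P_\lambda(K_n^c)\to 0$, this already gives
\[
    \bigl|\widehat\theta_\lambda^{L,n}(k)-\theta_\lambda^L(k)\bigr|\le 2\,\mathbb P_\lambda(K_n^c)\xrightarrow[n\to\infty]{}0.
\]

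\emph{Step 2 (discrete vs.\ auxiliary).} The vertex marginals already coincide, since each cell is nonempty with probability $1-e^{-\lambda\,\nu(R_{(\bar u,m)})}=1-e^{-\lambda 2^{-nd}\Pi(m,n)}$, which is exactly the probability that the corresponding vertex is open in $P_\lambda^{L,n}$. Only the edge marginals differ: the discrete one evaluates $\varphi$ at cell centers, whereas the auxiliary one (conditionally on $K_n$) uses the cell average $\widehat\varphi(R_{u_1},R_{u_2})$ of~\eqref{eq:hat-varphi-simple}. Couple all edges through independent uniform random variables; the total-variation mismatch is then dominated by the expectation of
\[
    \sum_{\{u_1,u_2\}\in E_n^L}\mathds 1_{\{R_{u_1},R_{u_2}\text{ both occupied}\}}\,\bigl|\varphi(|\bar u_1-\bar u_2|;m_1,m_2)-\widehat\varphi(R_{u_1},R_{u_2})\bigr|.
\]
The expected number of occupied cells in $\Lambda^L$ is bounded by $\lambda(2L)^d$, uniformly in $n$, so it is enough to prove termwise convergence. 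This follows by one application of Lebesgue's differentiation theorem to the bounded measurable function $\varphi$: for $\nu\otimes\nu$-a.e.\ pair of points $((\bar x_1,a_1),(\bar x_2,a_2))$ with $\bar x_1\neq \bar x_2$, $\widehat\varphi(R_{u_1},R_{u_2})\to\varphi(|\bar x_1-\bar x_2|;a_1,a_2)$. Combined with Step~1 this proves~(1). For part~(3), I would write $E_\lambda^{L,n}[|\mathcal C(\bar 0,1)|]=\sum_{k\ge 1}\theta_\lambda^{L,n}(k)$ and the analogue for the continuum, then invoke dominated convergence with integrable majorant coming from $|\mathcal C(\bar 0,1)|\le 1+\#\{u\in V_n^L\setminus\{(\bar 0,1)\}: u\text{ is open}\}$, the latter count being stochastically dominated by $1+\mathrm{Poisson}(\lambda(2L)^d)$ via the coupling of Step~2.

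\emph{Step 3 (the derivative, part (2)).} By Corollary~\ref{derivative_cluster_tail_discrete} in the form of Remark~\ref{rem:russi_cov-pif},
\[
    \frac{\mathrm d}{\mathrm d\lambda}\theta_\lambda^{L,n}(k)
    =\sum_{(\bar u,m)\in V_n^L}\nu(R_{(\bar u,m)})\,e^{-\lambda\nu(R_{(\bar u,m)})}\,\mathbb P_\lambda^{L,n}\!\bigl((\bar u,m)\text{ is pivotal for }|\mathcal C(\bar 0,1)|\ge k\bigr),
\]
whereas the infinite-volume Margulis--Russo formula~\eqref{eq:russo-infinite} gives
\[
    \frac{\mathrm d}{\mathrm d\lambda}\theta_\lambda^L(k)
    =\int_{\Lambda^L}\mathbb P_\lambda\!\bigl(\text{adding } x \text{ raises } |\C_L(\bar 0,1)| \text{ to at least } k\bigr)\,\nu(\mathrm dx),
\]
the integration being restricted to $\Lambda^L$ because an added point outside $\Lambda^L$ cannot affect the finite-volume cluster. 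The discrete sum is precisely the Riemann-type sum associated with this integral, weighted by $\nu(R_{(\bar u,m)})$. The coupling of Steps~1--2, extended to the augmented configurations $\xi^x$ with a deterministic extra point, yields $\nu$-a.e.\ convergence of the integrands, each of which is bounded by~$1$; since $\nu(\Lambda^L)=(2L)^d<\infty$, bounded convergence closes the argument. The delicate step I anticipate is exactly this $\nu$-a.e.\ convergence of pivotal probabilities: one must propagate the discrete/continuum coupling through the insertion of the extra point~$x$, so as to match discrete pivotality at the cell containing $x$ with continuum pivotality at $x$. This is handled by a further Lebesgue differentiation argument applied to the edges emanating from the added point, using that the cell containing~$x$ has vanishing $\nu$-measure.
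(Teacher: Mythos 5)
Your overall strategy---compare the lattice Bernoulli model to the cell-discretized continuum model, and the latter to the true finite-volume continuum via the event $K_n$, then use Russo's formulas for Part~(2) and a dominated-convergence majorant for Part~(3)---is the right one and matches the paper's architecture. Part~(3) via stochastic domination by a Poisson random variable is in fact a cleaner majorant than what the paper writes. However, there are two genuine gaps in Step~2 of Part~(1).

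First, the coupling you propose does not exist. Conditionally on the occupation pattern of the cells (and on $K_n$), the edges of the \emph{auxiliary} graph are \emph{not} independent: two edges sharing a cell $R_u$ both depend on the (random) position of the unique Poisson point inside $R_u$, so the joint edge law is a mixture, not a product Bernoulli with marginals $\widehat\varphi$. Consequently ``couple all edges through independent uniform random variables'' is not a representation of the auxiliary law, and the total-variation mismatch is not dominated by $\sum_e \mathds 1_{\{\dots\}}\bigl|\varphi(|\bar u_1-\bar u_2|;m_1,m_2)-\widehat\varphi(R_{u_1},R_{u_2})\bigr|$. The valid coupling conditions additionally on the \emph{positions} inside the cells, at which point both models do have conditionally product edges; but then the per-edge discrepancy becomes $\bigl|\varphi(|\bar u_1-\bar u_2|;m_1,m_2)-\varphi(|\bar X_1-\bar X_2|;A_1,A_2)\bigr|$, which is an $L^1$-oscillation of $\varphi$ over the cell product, not the difference between $\varphi$ at the lattice point and its cell average. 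By Jensen, your stated quantity underestimates this.

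Second, even with the corrected coupling, Lebesgue differentiation does not close the argument: it yields $\widehat\varphi(R_{u_1},R_{u_2})\to\varphi(\bar x_1,\bar x_2;a_1,a_2)$ (average-to-point), whereas what you actually need is $\varphi(\text{lattice points})\to\varphi(\text{continuum points})$, which is a statement about a.e.\ \emph{continuity} of $\varphi$, not differentiation of an $L^1$ function. For a generic bounded measurable function this fails. The paper avoids this by first exploiting the monotonicity structure of Assumptions (A.2)--(A.3) to reduce, without changing the law of the model, to a uniformly continuous $\varphi$; only then does it run a cluster-expansion comparison of the subgraph-configuration and external-isolation factors. You would need an analogous continuity reduction before your (corrected) coupling argument can conclude. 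For Part~(2) you correctly identify the delicate matching of Palm pivotality at $x$ with cell pivotality, but leave it unresolved; the paper handles this with a three-term chain of pivotality notions (cell-averaged, cell, Bernoulli) and the estimate $\mathbb P_\lambda(N_{R_v}=1)\to0$ to control the event that the cell is already occupied.
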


\begin{proof}
To compare the auxiliary model to the original continuous one, we follow ideas from Meester \cite{Mee95}, while accounting for the additional parameter arising from the weights. The comparison of our discrete inhomogeneous Bernoulli percolation to the auxiliary model follows a more original and analytical approach with a cluster expansion argument.

\paragraph{Reduction to a uniformly continuous connection function.}
Throughout the proof we may and shall assume that the connection function 
\(\varphi(r;a,b)\) is uniformly continuous, jointly in 
\((r,a,b)\) on \([0,2L]\times[1,\infty)^2\), without altering the law of 
the model.
This can be justified as follows:

\smallskip
\noindent\emph{Continuity in each variable.}  
By Assumption~(A.2), for each pair of weights $(a,b)$ the map $r \mapsto \varphi(r;a,b)$ is monotone and hence has at most countably many discontinuities. Since the spatial positions of the PPP are absolutely continuous with respect to Lebesgue measure and $\Lambda^L$ contains almost surely only finitely many points, the probability that the distance between any two points falls on a discontinuity is zero; redefining connection function on this $\nu$-null set does not affect the law, and we may assume that $r \mapsto \varphi(r;a,b)$ is continuous for all $(a,b)$. By a similar argument using Assumption~(A.3), we may without loss of generality assume that the connection function is continuous in the weight variables as well.

\smallskip
\noindent\emph{Joint uniform continuity.} By the assumed separate continuity and monotonicity properties, the connection function $\varphi$ is jointly continuous in its variables. Since possible non-uniformity can only arise from the unbounded weight coordinates, a standard weight truncation argument (as in the proof of Corollary~\ref{derivative_cluster_tail_discrete}) allows us to assume joint uniform continuity.

\begin{proof}[Proof of Part (1).]
    We first show that
    \begin{equation}\label{eq:hat-theta-to-cont}
       \widehat\theta_\lambda^{L,n}(k) \xrightarrow[]   {n\to\infty}\theta_\lambda^L(k)
    \end{equation}
    and then that
    \begin{equation}\label{eq:theta-minus-hattheta}
       \theta_\lambda^{L,n}(k)-\widehat\theta_\lambda^{L,n}(k)\xrightarrow{n\to\infty} 0.
    \end{equation}
    Combining \eqref{eq:hat-theta-to-cont} and \eqref{eq:theta-minus-hattheta}
    gives the claim.
    
    \smallskip
    \noindent\textbf{Step 1: Convergence of the discretized continuum cluster.}
    First of all, we couple the Bernoulli percolation on the vertices in our finite lattice approximation to the same underlying marked PPP $\eta\cup \{(\Bar{0},1)\}$ in the obvious way, i.e., by letting a vertex $(\Bar{u},m)\in V^L_n$ being open if and only if the corresponding cell is occupied by at least one point from $\eta\cup \{(\Bar{0},1)\}$. 
    
    Next, fix a realization \(\xi^{(\bar 0,1)}\).  
    Since $\eta$ almost surely has finitely many points in 
    \(\Lambda^L\) and the cell diameters tend to zero, there almost surely exists a finite 
    \(n_0(\xi)\) such that for all \(n \ge n_0(\xi)\) each point lies in a distinct cell of the partition.  
    By the construction of the auxiliary model and coupling Bernoulli percolation on the vertices to the same underlying PPP in our finite lattice approximation in the obvious way (i.e., letting a vertex $(\Bar{u},m)\in V^L_n$ being open if and only if the corresponding cell is occupied by Poisson points), both clusters $\C_L^n(\bar0,1)$ and $\C_L(\bar0,1)$ are the same from a graph-theoretic perspective (are isomorphic). Consequently, for all \(n \ge n_0(\xi)\), we get
    \[
       \bigl|\C_L^n(\bar0,1)\bigr|
       = 
       \bigl|\C_L(\bar0,1)\bigr|.
    \]
    Hence
    \[
       \mathds{1}_{\{|\C_L^n(\bar0,1)|\ge k\}}
       \xrightarrow[n\to\infty]{\text{a.s.}}
       \mathds{1}_{\{|\C_L(\bar0,1)|\ge k\}}.
    \]
    By dominated convergence,
    \[
       \widehat\theta_\lambda^{L,n}(k)\xrightarrow{n\to\infty}\theta_\lambda^{L}(k).
    \]

    \smallskip
    \noindent\textbf{Step 2: Comparing }\(\theta_\lambda^{L,n}\)\textbf{ and }\(\widehat\theta_\lambda^{L,n}\). We now prove
    \begin{equation}\label{eq:theta-hat-diff-goes-to-zero}
       \theta_\lambda^{L,n}(k)-\widehat\theta_\lambda^{L,n}(k) \xrightarrow{n\to\infty} 0 .
    \end{equation}
    
    We write the tail probability as a sum over the exact cluster size:
    \[
       \theta_\lambda^{L,n}(k)
       =\sum_{s\ge k-1} P_\lambda^{L,n}\!\big(|\mathcal{C}(\bar0,1)|=s+1\big),
       \qquad
       \widehat\theta_\lambda^{L,n}(k)
       =\sum_{s\ge k-1} \mathbb P_\lambda\!\big(|\C_L^n(\bar0,1)|=s+1\big).
    \]

    Note the following uniform upper bound on the expected cluster size:
    \begin{align}\label{eq:cluster-exp-bound}
 E_\lambda^{L,n}\bigl[|\mathcal C(\bar0,1)|\bigr]
       \le \sum_{(\bar u,m)\in V_n^L} p_\lambda(\bar u,m)
         \le \sum_{(\bar u,m)\in V_n^L} \lambda\,\nu_{(\bar u,m)} 
       \le \lambda\,\nu(\Lambda_0^L) < \infty. 
    \end{align}
    The same uniform estimate holds for the discretized continuum cluster
    \(\C_L^n(\bar0,1)\) and, by Markov's inequality, for any \(s\ge1\),
    \[
       \sup_n P_\lambda^{L,n}\!\bigl(|\mathcal C(\bar0,1)|\ge s\bigr)
       \le \frac{\lambda\,\nu(\Lambda_0^L)}{s},
       \qquad
       \sup_n \mathbb P_\lambda\!\bigl(|\C_L^n(\bar0,1)|\ge s\bigr)
       \le \frac{\lambda\,\nu(\Lambda_0^L)}{s}.
    \]
    Thus, we have control over the tails in \(s\) --- uniformly in $n$. To obtain~\eqref{eq:theta-hat-diff-goes-to-zero}, it then suffices to show that for each fixed \(s\in\N\),
    \[
       P_\lambda^{L,n}\!\big(|\mathcal{C}(\bar0,1)|=s+1\big)
       - \mathbb P_\lambda\!\big(|\C_L^n(\bar0,1)|=s+1\big)
       \xrightarrow{n\to\infty} 0 .
    \]

    Notice that, for any $s \in \N$, we have
    \begin{align}
       &P_\lambda^{L,n}\!\left(\bigl|\mathcal{C}(\bar 0,1)\bigr| = s+1\right) \\
       &= \frac{1}{s!}
          \sum_{\substack{u_1,\dots,u_s \in V_n^L\setminus\{u_0\} \\ \text{all distinct}}}
          \sum_{G \in \mathrm{Gr}_{s+1}}
          P_\lambda^{L,n}\big(\mathcal{E}_G(u_0,\dots,u_s)\big)
          P_\lambda^{L,n}\big(\partial(\{u_0,\dots,u_s\})=\varnothing\big)
          \prod_{i=1}^s p_\lambda(u_i),\nonumber \label{eq:cluster-expansion}
    \end{align}
    where $u_0=(\bar 0,1)$, the outer sum runs over all possible locations of
    the remaining $s$ vertices of the cluster, and $\mathrm{Gr}_{s+1}$ denotes the
    set of simple connected graphs on $\{0,1,\dots,s\}$.  The event
    $\mathcal{E}_G(u_0,\dots,u_s)$ specifies that the induced subgraph on
    $\{u_0,\dots,u_s\}$ is exactly $G$, while the event
    $\partial(\{u_0,\dots,u_s\})=\varnothing$ denotes the external-isolation event (i.e., the event that the vertices $u_0, \dots, u_s$ do not connect to any other vertex).  Finally, the factor
    $\prod_{i=1}^s p_\lambda(u_i)$ is the probability that the vertices
    $u_1,\dots,u_s$ are open.

        We obtain an almost identical expansion for the probability
    $\mathbb P_\lambda\!\big(|\C_L^n(\bar0,1)|=s+1\big)$, with exactly the same sums and same
    factors $\prod_{i=1}^s p_\lambda(u_i)$; the only difference being that the “subgraph configuration’’
    and “external isolation’’ probabilities, taken under continuum
    law $\mathbb P_\lambda$, might differ.

    Fix $s\in\mathbb N$, a graph $G\in\mathrm{Gr}_{s+1}$, and distinct
    sites $u_1,\dots,u_s\in V_n^L\setminus\{u_0\}$.
    In the two models, the probabilities that $u_0,\dots,u_s$ are open and that the cells $R_{u_0},\dots,R_{u_s}$ are occupied are the same, to be more precise
    \[
        P_\lambda^{L,n}(u_i\ \text{open}) = 1-e^{-\lambda\nu_{u_i}}
        = \mathbb P_\lambda(N_{R_{u_i}} \geq 1),
    \]
    Thus, after conditioning on the set of occupied cells, the comparison
    reduces to two factors:
    \begin{enumerate}
        \item[(i)] the probability that the subgraph induced by $\{u_0,\dots,u_s\}$ is equal to $G$;
        \item[(ii)] the probability that $\{u_0,\dots,u_s\}$ is externally isolated.
    \end{enumerate}

    \smallskip
    \emph{Subgraph configuration.}
    Fix $s\in\N$ and $G\in\mathrm{Gr}_{s+1}$.
    Condition in both models on the event that the vertices
    $u_0,\dots,u_s$ are open / the cells
    $R_{u_0},\dots,R_{u_s}$ are occupied.

    In our discrete Bernoulli percolation model, the probability of an edge between
    $u_i$ and $u_j$ is
    $\varphi(\|\bar u_i-\bar u_j\|;m_{u_i},m_{u_j})$.
    In the auxiliary model, the corresponding edge probability is
    $\widehat{\varphi}(R_{u_i},R_{u_j})$ from Remark~\ref{rem:conn_auxiliary}.
    On the event $K_n$, we have the representation
    \eqref{eq:hat-varphi-simple} and, since the cell diameters tend to $0$
    and $\varphi$ is uniformly continuous on $\Lambda^L\times\Lambda^L$, this implies
    \[
       \sup_{u\neq v}
       \bigl|\widehat{\varphi}(R_u,R_v)
             -\varphi(\|\bar u-\bar v\|;m_u,m_v)\bigr|
       \xrightarrow[n\to\infty]{} 0.
    \]

    For fixed $G$ and $(u_0,\dots,u_s)$, the event
    $\mathcal{E}_G(u_0,\dots,u_s)$ is specified by finitely many edges and
    non-edges on $\{u_0,\dots,u_s\}$, so its probability is a polynomial in
    the corresponding edge probabilities.
    The uniform bound above therefore yields
    \[
       \sup_{\substack{u_1,\dots,u_s \in V_n^L\setminus\{u_0\} \\ \text{all distinct}}}
       \sup_{G \in \mathrm{Gr}_{s+1}}
       \Bigl|
          P_\lambda^{L,n}\big(\mathcal{E}_G(u_0,\dots,u_s)\big)
          - \mathbb P_\lambda\big(\mathcal{E}_G(R_{u_0},\dots,R_{u_s}) \mid K_n\big)
       \Bigr|
       \xrightarrow[n\to\infty]{} 0.
    \]
    Since $\mathbb P_\lambda(K_n^c)\to0$ by Remark~\ref{rem:tech-lemma-fla}, 
    restricting to $K_n$ does not affect the limit, and the subgraph 
    configuration probabilities coincide asymptotically in the two models.

        \smallskip
    \emph{External isolation.}
    Unlike the subgraph configuration, the external–isolation event depends on
    the state of every other open cell in $\Lambda^L$, which are almost
    surely finite.  Thus, to compare the two models we first condition
    on the full set of occupied cells in the auxiliary model (respectively, the
    full set of open sites in the finite lattice approximation), and only afterwards restrict
    to the event $K_n$, so that this conditioning does not change any marginals.
    
    Under this conditioning, the external–isolation probability of
    $\{u_0,\dots,u_s\}$ is determined only by the collection of edge
    probabilities between these $s+1$ sites and all other occupied/open cells.
    
    The external–isolation probability is therefore a function of finitely many
    edge probabilities.  Since these vertex–opening probabilities coincide in
    the two models, continuity of this function in its arguments yields
    pointwise convergence of the isolation probabilities for every possible set
    of open cells, which suffices here because the convergence of edge
    probabilities is already uniform in $n$.  Hence,
    \[
       \sup_{\substack{u_1,\dots,u_s \in V_n^L\setminus\{u_0\} \\ \text{all distinct}}}
       \Bigl|
         P_\lambda^{L,n}\big(\partial(\{u_0,\dots,u_s\})=\varnothing\big)
         -\mathbb P_\lambda\big(\partial(\{R_{u_0},\dots,R_{u_s}\})=\varnothing \mid K_n\big)
       \Bigr|
       \xrightarrow[n\to\infty]{}0.
    \]
    Since $\mathbb P_\lambda(K_n^c)\to0$ by Remark~\ref{rem:tech-lemma-fla},
    restricting to $K_n$ does not affect the limit, and the external–isolation
    probabilities coincide asymptotically in the two models.

    \smallskip
    \emph{Conclusion.}
    Fix $s\in \N$ and notice that we have the following bound (when bounding the probability of subgraph configuration and external isolation by 1)
    \[
       \frac{1}{s!}\sum_{\substack{x_1,\dots,x_s\in V_n^L\setminus\{x_0\}}}\sum_{G \in \mathrm{Gr}_{s+1}}
          \prod_{i=1}^s p_\lambda(x_i)
       \le\frac{\abs{\mathrm{Gr}_{s+1}}}{s!} \Bigl(\sum_{x\in V_n^L} p_\lambda(x)\Bigr)^s \leq \frac{\abs{\mathrm{Gr}_{s+1}}}{s!}\left(\lambda \nu(\Lambda_0^L)\right)^s
       <\infty.
    \]
    Using the fact that the vertex factors \(\prod_{i=1}^s p_\lambda(x_i)\) are
    the same in both expansions, as well as both pointwise convergences and the upper bound above, allows to conclude that, for each \(s\in\mathbb N\), we have
    \[
       \bigl|
          P_\lambda^{L,n}(|\mathcal C(\bar0,1)|=s+1)
          - \mathbb P_\lambda(|\C_L^n(\bar0,1)|=s+1)
       \bigr|
       \to 0 \quad \text{ as } n \to \infty,
    \]
    which yields the claim in Part~(1).
\end{proof}

    \begin{proof}[Proof of Part~(2)]
    We prove that for every $\lambda>0$ and $k\in\mathbb N$,
    \[
       \frac{\dd}{\dd\lambda}\,\theta_\lambda^{L,n}(k)
       \xrightarrow[n\to\infty]{}
       \frac{\dd}{\dd\lambda}\,\theta_\lambda^{L}(k).
    \]
    The argument proceeds in two separate steps.
    
    \medskip
    \noindent\textbf{Step 1: Continuum versus discretized continuum.}  
    Let $A:=\{|\C_L(\bar0,1)|\ge k\}$ and 
    $A_n:=\{|\C_L^n(\bar0,1)|\ge k\}$.  
    Russo’s formula in the continuum gives
    \begin{equation*}\label{eq:russo-cont}
       \frac{\dd}{\dd\lambda}\,\theta_\lambda^{L}(k)
       = \int_{\Lambda^L} \Delta(x)\,\nu(\dd x),
       \end{equation*}
    where $\Delta(x)$ denotes the probability that $x$ is pivotal for~$A$
    under Palm insertion/removal, i.e.,
    \begin{align}\label{def:cont_piv}
      \Delta(x):=\mathbb P_\lambda(\mathds 1_{A}(\xi^x)\neq \mathds 1_{A}(\xi))=\mathbb P_\lambda(\xi^x\in A)-\mathbb P_\lambda(\xi\in A).  
    \end{align}
    
    By the same argument as in Step~1 of Part~(1), for  almost every Poisson realization $\xi$, the clusters in $A$ and $A_n$ coincide as soon as $n \geq n_0(\xi)$ for some finite $n_0(\xi)$. Hence, for almost every $x \in \Lambda^L$,
    \[
       \mathds{1}_{\{x \text{ pivotal for } A_n\}}(\xi)
       = \mathds{1}_{\{x \text{ pivotal for } A\}}(\xi)
       \qquad \text{for $n\geq n_0(\xi) $},
    \]
    and therefore $\Delta_n(x)\to \Delta(x)$ a.e., where $\Delta_n(x)$ is the probability that $x$ is pivotal for $A_n$ (defined analogously to~\eqref{def:cont_piv}).  
    Since $0\le \Delta_n(x)\le 1$ and $\Lambda_n^L\downarrow\Lambda^L$, dominated convergence yields
    \begin{equation*}\label{eq:russo-step1}
       \frac{\dd}{\dd\lambda}\,\widehat\theta_\lambda^{L,n}(k)
       = \int_{\Lambda^L_n} \Delta_n(x)\,\nu(\dd x)
       \xrightarrow[n\to\infty]{}
       \frac{\dd}{\dd\lambda}\,\theta_\lambda^{L}(k).
    \end{equation*}

    \medskip
    \noindent\textbf{Step 2: Discretized continuum versus finite lattice apporximation.}
    For the discrete Bernoulli model $P_\lambda^{L,n}$, Russo’s formula
    (see Remark~\ref{rem:russi_cov-pif}) yields
    \begin{equation}\label{eq:russo-disc-step2}
       \frac{\dd}{\dd\lambda}\,\theta_\lambda^{L,n}(k)
       = \sum_{v\in V_n^L}
         \nu_v\,(1-p_\lambda(v))\,\pi_v^{(n)},
    \end{equation}
    where $p_\lambda(v)=1-e^{-\lambda\nu_v}$ and $\pi_v^{(n)}$ denotes the probability
    that the vertex $v$ is pivotal for the event
    $\{|\mathcal C(\bar0,1)|\ge k\}$. We will show
    \begin{equation}\label{eq:russo-hat-step2} 
    \int_{\Lambda_n^L} \Delta_n(x)\,\nu(\dd x)-\sum_{v\in V_n^L}
         \nu_v\,(1-p_\lambda(v))\,\pi_v^{(n)}\xrightarrow[n\to\infty]{}0.
    \end{equation}
    
    \smallskip
    \noindent\emph{Cell–averaged pivotality.}
    Decomposing the integral on the left side of~\eqref{eq:russo-hat-step2} cell by cell, we define for each $v\in V_n^L$
    \begin{equation}\label{eq:def-hatpi}
       \widehat\pi_v^{(n)}
       := \frac{1}{\nu_v}\int_{R_v}\Delta_n(x)\,\nu(\dd x),
       \qquad \nu_v:=\nu(R_v).
    \end{equation}
    Then
    \begin{equation*}\label{eq:riemann-pivotal}
       \int_{\Lambda_n^L}\Delta_n(x)\,\nu(\dd x)
       = \sum_{v\in V_n^L}\nu_v\,\widehat\pi_v^{(n)}.
    \end{equation*}
    Note that $\Delta_n(x)$ is not constant on $R_v$, even on $K_n$, so
    $\widehat\pi_v^{(n)}$ is a genuine average of point pivotalities over the cell.
    
    \smallskip
    \noindent\emph{Cell pivotality.}
    We next introduce an intermediate pivotality notion.
    Let $\widetilde\pi_v^{(n)}$ be the \emph{cell pivotality} of $R_v$ for $A_n$,
    defined as the probability that $A_n$ occurs when the cell $R_v$ is forced open,
    minus the probability that $A_n$ occurs when $R_v$ is forced closed, with all
    other cells sampled according to the discretized continuum law.
    Equivalently, $\widetilde\pi_v^{(n)}$ is the probability that the indicator of
    $A_n$ changes when the state of the cell $R_v$ is switched from closed to open.
    
    \smallskip
    \noindent\emph{First comparison: averaged point pivotality versus cell pivotality.}
    Recall that $\widehat\pi_v^{(n)}$ is the average, over $x\in R_v$, of the Palm
    pivotality of a \emph{point} inserted at $x$, whereas $\widetilde\pi_v^{(n)}$
    measures the effect of switching the \emph{state of the cell} $R_v$ from closed
    to open.
    
    The only possible discrepancy between these two notions arises from the fact
    that, in the Palm experiment defining $\Delta_n(x)$, the cell $R_v$ may already
    contain a Poisson point. On the event $K_n$, this can only happen when
    $N_{R_v}=1$. On the complementary event $\{N_{R_v}=0\}\cap K_n$, inserting a
    point at $x\in R_v$ produces exactly one point in $R_v$ with distribution
    proportional to $\nu$, which coincides with the law of the unique point created
    when the cell $R_v$ is forced open. In this case, the Palm pivotality at $x$ and
    the cell pivotality agree.
    
    Since both pivotality indicators take values in $[0,1]$, we may therefore bound
    \[
       \bigl|\widehat\pi_v^{(n)}-\widetilde\pi_v^{(n)}\bigr|
       \le \mathbb P_\lambda(N_{R_v}=1) + \mathbb P_\lambda(K_n^c).
    \]
    Multiplying by $\nu_v$ and summing over $v\in V_n^L$, we obtain
    \[
       \sum_{v\in V_n^L} \nu_v\,
       \bigl|\widehat\pi_v^{(n)}-\widetilde\pi_v^{(n)}\bigr|
       \le \sum_{v\in V_n^L} \nu_v\,p_\lambda(v)
            + \nu(\Lambda_0^L)\,\mathbb P_\lambda(K_n^c).
    \]
    Since $p_\lambda(v)=O(\nu_v)$ and $\sum_v \nu_v^2\to0$, while
    $\mathbb P_\lambda(K_n^c)\to0$, the right-hand side converges to zero as
    $n\to\infty$, which yields
    \begin{equation}\label{eq:hatpi-vs-tildepi}
       \sum_{v\in V_n^L}
       \nu_v\,\bigl|\widehat\pi_v^{(n)}-\widetilde\pi_v^{(n)}\bigr|
       \xrightarrow[n\to\infty]{} 0 .
    \end{equation}

    \smallskip
    \noindent\emph{Second comparison: cell pivotality versus Bernoulli pivotality.}
    We now compare $\widetilde\pi_v^{(n)}$ with the Bernoulli pivotality
    $\pi_v^{(n)}$.
    In both models, pivotality of $v$ implies that forcing $v$ closed yields a finite
    cluster $(S,G_0)$ with $|S|<k$, while forcing $v$ open yields a larger finite
    cluster $(T,G_1)$ with $|T|\ge k$, both clusters being externally isolated.
    Thus, both $\widetilde\pi_v^{(n)}$ and $\pi_v^{(n)}$ admit cluster expansions
    indexed by quadruples $(S,T,G_0,G_1)$.
    
    For $M\in\mathbb N$, let $\widetilde\pi^{(n)}_{v,M}$ and $\pi^{(n)}_{v,M}$ denote
    the contributions restricted to $|T|\le M$, and let the corresponding remainders
    be $\widetilde R^{(n)}_{v,M}$ and $R^{(n)}_{v,M}$.
    For each fixed $(S,T,G_0,G_1)$ with $|T|\le M$, the associated probabilities in the
    two models differ only through internal edge probabilities and a single
    external–isolation factor, and these differences vanish as $n\to\infty$ by the
    results of Part~(1).
    Since the truncated sums are finite, we obtain
    \[
       \widetilde\pi^{(n)}_{v,M}-\pi^{(n)}_{v,M}
       \xrightarrow[n\to\infty]{}0
       \qquad \text{for each fixed }v\text{ and }M.
    \]
    
    The remainders are controlled uniformly by the expected cluster size estimate:
    if $|T|>M$, forcing $v$ open produces a cluster of size $>M$, so by Markov’s
    inequality,
    \[
       \sup_{n,v}\widetilde R^{(n)}_{v,M}
       + \sup_{n,v}R^{(n)}_{v,M}
       \le \frac{C(\lambda,L)}{M}.
    \]
    Letting first $n\to\infty$ at fixed $M$, and then $M\to\infty$, yields
    \begin{equation}\label{eq:tildepi-vs-pi}
       \sum_{v\in V_n^L}
       \nu_v\,\bigl|\widetilde\pi_v^{(n)}-\pi_v^{(n)}\bigr|
       \xrightarrow[n\to\infty]{} 0 .
    \end{equation}
    
    \smallskip
    \noindent\emph{Conclusion.}
    Combining \eqref{eq:hatpi-vs-tildepi} and \eqref{eq:tildepi-vs-pi}, we obtain
    \[
       \int_{\Lambda_n^L}\Delta_n(x)\,\nu(\dd x)
       - \sum_{v\in V_n^L}\nu_v\,\pi_v^{(n)}
       \xrightarrow[n\to\infty]{} 0 .
    \]
    Finally, since $1-p_\lambda(v)=e^{-\lambda\nu_v}=1+O(\nu_v)$ and
    $\sum_v\nu_v^2\to0$, we may replace $(1-p_\lambda(v))$ by $1$ in
    \eqref{eq:russo-disc-step2} at vanishing cost. Comparing with
    \eqref{eq:russo-hat-step2} and Step~1 completes the proof of Part~(2).
    \end{proof}

\begin{proof}[Proof of Part (3).]
    Finally,
    \[
       E_\lambda^{L,n}\big[\vert \mathcal{C}(\Bar{0},1)\vert\big]
       = \sum_{k = 1}^\infty \theta^{L,n}_\lambda(k),
       \qquad
       \E_{\lambda}\big[|\C_L(\bar 0,1)|\big]
       = \sum_{k = 1}^\infty \theta^{L}_\lambda(k).
    \]
    By  Part~(1) we have $\theta^{L,n}_\lambda(k)\to\theta^{L}_\lambda(k)$ for each fixed
    $k$. Moreover,
    \begin{align*}
       E_\lambda^{L,n}\big[\vert \mathcal{C}(\Bar{0},1)\vert\big]
       &\leq E_\lambda^{L,n}\big[ \# \{\text{open vertices in }\Lambda_n^L\}\big] \\
       &\leq \sum_{(\Bar{u},m) \in V_n^L} p_\lambda(\Bar{u},m) \\
       &\leq \sum_{(\Bar{u},m) \in V_n^L} \lambda\,\nu_{(\bar u,m)}
        = \lambda\,\nu(\Lambda_0^L),
    \end{align*}
    so the expectations are uniformly bounded in $n$. Dominated convergence then
    yields the claim in Part~(3).
\end{proof}
\phantom\qedhere
\end{proof}

\subsection{Infinite-volume limit}

To finish, we explain why under our assumptions the continuum models in finite volume converges to the appropriate quantities in infinite volume.

\begin{lemma}\label{lem:converges_theta_inf_vol}
Let $\lambda > 0$ and $k\in \N$. Under Assumptions~\textup{(A)} and~\textup{(NB)} the following limits hold:
\begin{align}
\theta^L_\lambda(k)&\to{\theta_\lambda(k)}, \label{convergence_cluster_probability_inf_vol}\\
\frac{\dd \theta^L_\lambda(k) }{ \dd \lambda} &\to \frac{\dd \theta_\lambda(k) }{ \dd \lambda}\label{convergence_derivative_cluster_probability_inf_vol}
\end{align}
\end{lemma}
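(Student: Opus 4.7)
}
The two parts are handled separately, but both rest on the fact that enlarging the box $\Lambda^L$ only adds vertices and edges to the graph used to build the cluster, so that the coupling
\[
\C_L(\bar 0,1) \nearrow \C(\bar 0,1) \qquad \text{almost surely as } L\to\infty
\]
holds deterministically. For part~(1), this immediately implies that the events
$A_L:=\{|\C_L(\bar 0,1)|\ge k\}$ form a monotone increasing sequence with union
$A:=\{|\C(\bar 0,1)|\ge k\}$, and continuity of $\pla$ from below gives
$\theta^L_\lambda(k)=\pla(A_L)\nearrow\pla(A)=\theta_\lambda(k)$.

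For part~(2), the plan is to apply the infinite-volume Russo formula~\eqref{eq:russo-infinite} from Appendix~\ref{AI} to both indicators $f=\mathds{1}_{A_L}$ and $f=\mathds{1}_A$, which yields
\[
\frac{d\theta^L_\lambda(k)}{d\lambda}=\int_\X \pi_L(x)\,\nu(dx),
\qquad
\frac{d\theta_\lambda(k)}{d\lambda}=\int_\X \pi(x)\,\nu(dx),
\]
with $\pi_L(x):=\pla(\mathds{1}_{A_L}(\xi^x)\neq\mathds{1}_{A_L}(\xi))$ and analogously $\pi(x)$. For each fixed $x\in\X$, pointwise convergence $\pi_L(x)\to\pi(x)$ will be obtained as follows: since $|\C_L(\bar 0,1)|$ is monotone in $L$ and converges to $|\C(\bar 0,1)|$ a.s.\ (and likewise for $\xi^x$), the pivotality indicator $\mathds{1}_{A_L}(\xi^x)-\mathds{1}_{A_L}(\xi)$ eventually stabilizes to $\mathds{1}_A(\xi^x)-\mathds{1}_A(\xi)$ on almost every realization, and bounded convergence applied inside the expectation gives the claim.

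The main obstacle is establishing convergence of the integrals against $\nu$. The natural dominating function is the two-point function $\tau_\lambda(x):=\pla(\bar 0\leftrightarrow x \text{ in }\xi^x)$, which bounds $\pi_L(x)$ uniformly in $L$ and satisfies $\int_\X\tau_\lambda\,d\nu=\lambda^{-1}\chi_\lambda$ by Mecke's formula. This is finite for $\lambda<\lambda_T$ and dominated convergence closes the argument there. For $\lambda\ge\lambda_T$ the naive bound is no longer integrable and a sharper argument is required. Here the plan is to exploit that pivotality for $A$ forces the cluster $\C(\bar 0,1)$ in $\xi$ to have at most $k-1$ vertices: arguing along the final edge of any path $\bar 0\to x$ in $\xi^x$, one sees that $x$ must be directly adjacent to some vertex of the (small) pre-insertion cluster, so the pivotal set has $\nu$-measure bounded by $(k-1)\,C_d\!\int_1^\infty R(m)^d\,\pi(dm)$, which is finite under~\eqref{assumption_moment_condition}. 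Combined with the pointwise convergence, this yields the desired integral limit.

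The second convergence in the supercritical regime is the only delicate step; the remaining manipulations are routine applications of Russo's formula, Mecke's identity, and continuity of probability measures.
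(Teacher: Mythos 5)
Your argument for the first limit is correct. The gap is in the second limit, in the supercritical regime. Your plan --- infinite-volume Russo formula, pointwise convergence $\pi_L(x)\to\pi(x)$ of the pivotality functions, dominated convergence --- is sound in outline, and for $\lambda<\lambda_T$ the two-point function does dominate. But for $\lambda\ge\lambda_T$ you only bound $\int\pi(x)\,\nu(\dd x)$, the integral of the \emph{limit}, and then assert that this ``combined with the pointwise convergence yields the desired integral limit.'' That is not a valid step: pointwise convergence of nonnegative functions together with integrability of the limit does not imply convergence of integrals, and the $\pi_L$ are not monotone in $L$ (enlarging the box increases both $|\C_L(\bar 0,1,\xi^x)|$ and $|\C_L(\bar 0,1,\xi)|$, so the pivotality probability can move in either direction). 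You need an $L$-uniform dominating function, and the event $\{|\C(\bar 0,1,\xi)|\le k-1\}$ does not supply one, since pivotality for $A_L$ only forces $|\C_L(\bar 0,1,\xi)|\le k-1$. A secondary issue: your bound invokes the reach function $R$ and the moment condition~\eqref{assumption_moment_condition}, whereas Lemma~\ref{lem:converges_theta_inf_vol} is stated under Assumptions~(A) and~(NB) alone and must also cover the non-weighted model, so the argument as written falls outside the lemma's hypotheses.

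Both problems are fixed by applying your own key observation uniformly in $L$. Set $\mathcal K:=\bigcup\{\C_L(\bar 0,1,\xi):L\in\N,\ |\C_L(\bar 0,1,\xi)|\le k-1\}$; by monotonicity of $L\mapsto\C_L(\bar 0,1,\xi)$ this is the finite-volume cluster at the largest $L$ for which it still has fewer than $k$ points (or the full cluster if $|\C(\bar 0,1,\xi)|\le k-1$), so $|\mathcal K|\le k-1$ deterministically. Any $x$ pivotal for $A_L$ is adjacent in $\xi^x$ to $\C_L(\bar 0,1,\xi)\subseteq\mathcal K$, hence $\pi_L(x)\le\pla(x\sim\mathcal K\text{ in }\xi^x)=:D(x)$ for all $L$; a union bound plus Fubini then give $\int_\X D\,\dd\nu\le\E_\lambda\bigl[\sum_{y\in\mathcal K}\int_\X\varphi(x,y)\,\nu(\dd x)\bigr]\le(k-1)\,s_d\,\Upsilon<\infty$ using~(NB) alone, and dominated convergence closes the argument for every $\lambda>0$. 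For what it is worth, the paper's own proof is a one-line citation of Lemma~3.5 of~\cite{CD24}, so your direct route, once patched, is a genuinely different and more self-contained alternative.
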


\begin{proof}
For this it suffices to apply the Lemma 3.5 from \cite{CD24}, or, more precisely, the corresponding result that appears in its proof. Indeed, for the models that we consider, the assumption $\textbf{(D.1-)}$ from \cite{CD24} is implied by our assumption~\textup{(NB)}.
\end{proof}

\begin{remark}
    One important thing to notice for any extension of this last result to more general models is that the assumption $\textbf{(D.1-)}$ from \cite{CD24} is not necessary to get~\eqref{convergence_cluster_probability_inf_vol} and a simpler integrability assumption like
    \begin{align}
         \int_{(1,\infty)}\int_{(1,\infty)}\int_{\Rd} \varphi(\Bar{x},m_0,m) \dd{\Bar{x}} \pi(\dd m_0) \pi(\dd m) < \infty
    \end{align}
    would be sufficient. Furthermore, although the assumption $\textbf{(D.1 -)}$ appears somewhat naturally in the proof of~\eqref{convergence_derivative_cluster_probability_inf_vol} (and we believe it might be necessary for some specific models), it is possible to proceed without that convergence. Indeed, one could consider  Dini-derivatives which always exist, like Hutchcroft does in \cite{Hut22}, since our results only require one sided bounds.
\end{remark}

\begin{lemma}\label{lem:converges_expected_cluster_inf_vol}
Let $\lambda > 0$ such that $\lambda < \lambda_T$. Under Assumptions~\textup{(A)} and~\textup{(NB)} as $L$ goes to infinity, the following limit holds:
\begin{align}
\
\E_{\lambda}[\vert \C_L(\Bar{0},1)\vert]&\to\E_{\lambda}[\vert \C(\Bar{0},1)\vert], \label{convergence_cluster_expectation_inf_vol}
\end{align}
\end{lemma}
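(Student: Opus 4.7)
The plan is to obtain this lemma as a direct application of the monotone convergence theorem, by realizing $|\C_L(\bar 0,1)|$ as a monotone (in $L$) family of random variables whose almost sure pointwise limit equals $|\C(\bar 0,1)|$. The finiteness assumption $\lambda < \lambda_T$ enters only at the very end to ensure that the limiting expectation is finite.

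First, I would record almost sure monotonicity. By construction, $\C_L(\bar 0,1)$ is the cluster of $(\bar 0,1)$ in the graph obtained from $\xi^{(\bar 0,1)}$ by retaining only those edges whose endpoints both lie in $\Lambda^L = [-L,L]^d \times [1,\infty)$; enlarging $L$ can only add admissible edges, so $\C_L(\bar 0,1) \subseteq \C_{L'}(\bar 0,1)$ whenever $L \le L'$, and trivially $\C_L(\bar 0,1) \subseteq \C(\bar 0,1)$ for every $L$.

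Second, I would establish the identification of the limit, namely $\bigcup_{L \in \N} \C_L(\bar 0,1) = \C(\bar 0,1)$ almost surely. For any $v \in \C(\bar 0,1)$, the very definition of connectivity in $\xi^{(\bar 0,1)}$ provides a finite sequence of adjacent vertices $(\bar 0,1) = v_0 \sim v_1 \sim \cdots \sim v_n = v$. Their spatial coordinates form a finite subset of $\R^d$, so for $L$ large enough (depending on the realization) each $v_i$ lies in $\Lambda^L$, the entire path is preserved in the finite-volume restriction, and hence $v \in \C_L(\bar 0,1)$ eventually. Combined with monotonicity this yields $|\C_L(\bar 0,1)| \uparrow |\C(\bar 0,1)|$ almost surely.

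Finally, applying monotone convergence delivers $\E_\lambda[|\C_L(\bar 0,1)|] \uparrow \E_\lambda[|\C(\bar 0,1)|]$, and the assumption $\lambda < \lambda_T$ guarantees via the very definition of $\lambda_T$ that the limit is finite. The argument is essentially structural and no analytic obstacle arises; the only minor subtlety is to confirm that the finite-volume conventions (removing edges that leave $\Lambda^L$, versus treating vertices outside $\Lambda^L$ as absent) agree on the cluster of $(\bar 0,1)$, and that any connection in the infinite-volume graph is realized by a \emph{finite} path, which is exactly what allows the truncation $\Lambda^L$ to capture it for large enough $L$.
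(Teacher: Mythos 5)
Your argument is correct, and it takes a genuinely different and more elementary route than the paper. The paper deduces the result from its Lemma~\ref{lem:converges_theta_inf_vol} (convergence of the finite-volume tail function $\theta_\lambda^L(k)\to\theta_\lambda(k)$ for each fixed $k$) via the representation $\E_\lambda[|\C_L(\bar 0,1)|]=\sum_{k\geq 1}\theta_\lambda^L(k)$, combined with the domination $\theta_\lambda^L(k)\leq\theta_\lambda(k)$ and the summability $\sum_k\theta_\lambda(k)=\E_\lambda[|\C(\bar 0,1)|]<\infty$ coming from $\lambda<\lambda_T$; it is essentially a dominated-convergence argument for series that leans on the previously established continuum-limit machinery. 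You instead work directly with the random variables: $\C_L(\bar 0,1)$ is almost surely nondecreasing in $L$, its union is $\C(\bar 0,1)$ because any connection is witnessed by a finite path (which sits inside $\Lambda^L$ for $L$ large), and then monotone convergence gives $\E_\lambda[|\C_L(\bar 0,1)|]\uparrow\E_\lambda[|\C(\bar 0,1)|]$. Your route is self-contained, does not invoke Lemma~\ref{lem:converges_theta_inf_vol} at all, and in fact establishes the convergence for every $\lambda>0$ (the hypothesis $\lambda<\lambda_T$ is then only needed to know the limit is finite, which is how it is used downstream). The only slight drawback of your approach is cosmetic: the paper's proof explicitly records the uniform bound $\E_\lambda[|\C_L(\bar 0,1)|]\leq\E_\lambda[|\C(\bar 0,1)|]$, which is reused elsewhere, whereas your MCT argument produces it implicitly. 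On the technical point you flag, the paper itself already notes that for the cluster of $(\bar 0,1)$ the two finite-volume conventions (deleting boundary-crossing edges versus deleting exterior vertices) coincide, so your subtlety is addressed there.
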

\begin{proof}
Since we assume $\lambda < \lambda_T$ we have $\E_{\lambda}[\vert \C(\Bar{0},1)\vert] < \infty$. Next, notice that for any $L > 0$ we have  $\E_{\lambda}[\vert \C_L(\Bar{0},1)\vert] \leq \E_{\lambda}[\vert \C(\Bar{0},1)\vert]$. Therefore, we can conclude by using the formula \begin{align}
    \E_{\lambda}[\vert \C_L(\Bar{0},1)\vert] = \sum_{k = 1}^\infty \theta^L_\lambda(k),
\end{align}
together with the preceding result Lemma \ref{lem:converges_theta_inf_vol}.
\end{proof}

\section*{Acknowledgements}
This work was supported by the Deutsche Forschungsgemeinschaft (DFG, project number 443880457) through the Priority Programme “Random Geometric Systems” (SPP 2265). The authors would like to thank Mikhail Chebunin and Benedikt Jahnel for interesting discussions. We are especially grateful to Markus Heydenreich for his constant support and insightful feedback, without which this work would not have been possible.

\end{document}